\theoremstyle{plain}
\newtheorem{theorem}{Theorem}
\newtheorem{lemma}[theorem]{Lemma}
\newtheorem{corollary}[theorem]{Corollary}
\theoremstyle{definition}
\newtheorem{remark}[theorem]{Remark}
\newtheorem{conjecture}[theorem]{Conjecture}
\newtheorem{problem}[theorem]{Open problem}
\newtheorem{claim}[theorem]{Claim}
\newcommand{\decisionpb}[4]{
        \begin{minipage}{#4\textwidth}
                #1\\
                \emph{Instance:} #2\\ 
                \emph{Question:} #3
        \end{minipage}
}
\newcommand{\PBLD}{\textsc{Locating-Dominating-Set}}
\newcommand{\PBOLD}{\textsc{Lower-Directed-LD-Number}}
\newcommand{\PBD}{\textsc{Dominat\-ing-Set}}
\newcommand{\old}{\overset{\rightarrow}{\gamma}_{LD}}
\newcommand{\mold}{\overset{\rightarrow}{\Gamma}_{LD}}
\newcommand{\ld}{{\gamma}_{LD}}
\newcommand{\LD}{{\Gamma}_{d}}
\tikzstyle{vertexstyle}=[circle, fill, minimum size=5, inner sep =0]
\tikzstyle{edgestyle}=[draw, line width=1.2pt]
\newcommand{\SL}{\mathcal{SL}}
\title{Locating-dominating sets: from graphs to oriented graphs\thanks{This work was supported by ANR project GrR (ANR-18-CE40-0032)}}
\author[1]{Nicolas Bousquet}
\author[1]{Quentin Deschamps}
\author[1,2]{Tuomo Lehtil\"a\footnote{Research supported by the Finnish cultural foundation and by the Academy of Finland grant 338797}}
\author[1]{Aline Parreau}
\affil[1]{Universit\'e de Lyon, Universit\'e Lyon 1, LIRIS UMR CNRS 5205, F-69621, Lyon, France}
\affil[2]{Department of mathematics and statistics, University of Turku, Finland}
\date{\today}
\begin{document}

\maketitle

\begin{abstract}
A locating-dominating set of an undirected graph is a subset of vertices $S$ such that $S$ is dominating and for every $u,v \notin S$, the neighbourhood of $u$ and $v$ on $S$ are distinct (i.e. $N(u) \cap S \ne N(v) \cap S$). 
Locating-dominating sets have received a considerable attention in the last decades.
In this paper, we consider the oriented version of the problem. A locating-dominating set in an oriented graph is a set $S$ such that for each $w\in V\setminus S$, $N^-(w)\cap S\neq\emptyset$ and for each pair of distinct vertices $u,v \in V \setminus S$, $N^-(u) \cap S \ne N^-(v) \cap S$. We consider the following two parameters. Given an undirected graph $G$, we look for $\old(G)$ ($\mold(G))$ which is the size of the smallest (largest) optimal locating-dominating set over all orientations of $G$.
In particular, if $D$ is an orientation of $G$, then $\old(G)\leq\ld(D)\leq\mold(G)$ where $\ld(D)$ is the minimum size of a locating-dominating set of $D$.

For the best orientation, we prove that, for every twin-free graph $G$ on $n$ vertices, $\old(G) \le n/2$ which proves a ``directed version'' of a widely studied conjecture on the location-domination number.  As a side result we obtain a new improved upper bound for the location-domination number in undirected trees.
Moreover, we give some bounds for $\old(G)$ on many graph classes and drastically improve the value $n/2$ for (almost) $d$-regular graphs by showing that $\old(G) \in O(\log d / d \cdot n)$ using a probabilistic argument.

While $\old(G)\leq\ld(G)$ holds for every graph $G$, we give some graph classes such as outerplanar graphs for which  $\mold(G)\geq\ld(G)$  and some for which $\mold(G)\leq \ld(G)$ such as complete graphs. We also give general bounds for  $\mold(G)$ such as $\mold(G)\geq\alpha(G)$. Finally, we show that for many graph classes $\mold(G)$ is polynomial on $n$ but we leave open the question whether there exist graphs with $\mold(G)\in O(\log n)$.
\end{abstract}

\section{Introduction}

A {\em dominating set} of an undirected graph $G$ is a subset $S$ of its vertices such that each vertex of $G$ not in $S$ has a neighbour in $S$. The {\em domination number} of $G$, denoted by $\gamma(G)$ is the size of a smallest dominating set of $G$. Domination theory is one of the main topics of graph theory, see for example the two reference books \cite{haynes1998domination, haynes1998fundamentals}.
Among the variations of domination, the {\em location-domination}, introduced by Slater \cite{slater1987domination}, has been extensively studied. A {\em locating-dominating set} of an undirected graph $G$ is a dominating set $S$ such that all vertices not in $S$ have pairwise distinct neighbourhoods in $S$. The {\em location-domination number} of $G$, denoted by $\ld(G)$, is the size of a smallest locating-dominating set of $G$. Since $V(G)$ is always a locating-dominating set, $\ld(G)$ is well-defined. Structural and algorithmic properties of locating-dominating sets have been widely studied  (see e.g.~\cite{lobstein2012watching} for an online bibliography).
Location-domination in directed graphs was briefly mentioned in several articles (see e.g.~\cite{charondirected, skaggs2007identifying}) and further studied in \cite{foucaud2020domination}. A \emph{locating-dominating set} of a directed graph $D$ is a subset $S$ of its vertices such that two vertices not in $S$ have distinct and non-empty {\em in-neighbourhoods} in $S$. The {\em directed location-domination number} of $D$, denoted by $\ld(D)$, is the size of a smallest locating-dominating set of $D$.

Two oriented graphs with the same underlying graph can have a very different behaviour towards locating-dominating sets. Let us illustrate it on \emph{tournaments} that are oriented complete graphs.
Transitive tournaments (i.e. acyclic tournaments) have directed location-domination number $\lceil n /2 \rceil$ whereas one can construct locating-dominating sets of size $\lceil \log n \rceil$ for a well-chosen orientation of $K_n$ \cite{skaggs2007identifying}.
Following the idea of Caro and Henning for domination \cite{caro2012directed} and the work started by Skaggs \cite{skaggs2007identifying}, we study in this paper the best and worst orientations of a graph for locating-dominating sets. Orientation of graph $G$ is considered to be \textit{best} (resp. \textit{worst}) if it minimizes (resp. maximizes) the location-domination number over all the orientations of $G$.
A similar line of work has been recently initiated for the related concepts of identifying codes \cite{cohen2018minimum} and metric dimension \cite{OrientedMD}. 

The two parameters that are be considered in this paper are the following. The {\em lower directed location-domination number} of an undirected graph $G$, denoted by $\old(G)$, is the minimum directed location-domination number over all the orientations of $G$. 
The {\em upper directed location-domination number} of an undirected graph $G$, denoted by $\mold(G)$, is the maximum directed location-domination number over all the orientations of $G$.

\subsection*{Outline of the paper}

Basic definitions, some background and first results are given in Section \ref{sec:preliminaries}. Section~\ref{sec:best} is dedicated to the study of the best orientations whereas Section \ref{sec:worst} focuses on the worst orientations.

\paragraph{Main results on best orientations.}
We first give basic results on $\old(G)$ and relations with classical parameters of graphs. Skaggs~\cite{skaggs2007identifying} proved in 2007 that for any graph $G$, $\old(G)\leq \ld(G)$. We refine this inequality by proving that, in graphs without cycles of size 4 (as a subgraph), $\old(G)$ and $\ld(G)$ coincide. As a consequence, computing $\old(G)$ is NP-complete.

Two vertices are \emph{twins} if they have the same open or closed neighbourhood. Twins play an important role in locating-dominating sets since any locating-dominating set must contain at least one vertex of each pair of twins. As a consequence, if $G$ is a star on $n$ vertices, then $\old(G)=n-1$.
In Section \ref{sec:n/2best}, we prove that this function can be drastically improved when the graph $G$ is twin-free, which is one of the main contributions of our paper.

\begin{restatable}{theorem}{thmtwinfreeRS}
\label{thm:n/2}
Let $G$ be a twin-free graph of order $n$ with no isolated vertices. Then, $\old(G)\leq n/2$.
\end{restatable}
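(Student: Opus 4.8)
The plan is to recast the statement as a combinatorial selection-and-labelling problem and then exploit the freedom offered by the orientation. First I would observe that orienting $G$ and choosing a locating-dominating set $S$ is equivalent to choosing the complement $R=V\setminus S$ together with, for each $w\in R$, a \emph{code} $c(w)\subseteq N(w)\cap S$: for every edge between $S$ and $R$ we may independently decide whether to orient it into $R$ (thus putting its $S$-endpoint into the code) or into $S$ (leaving it out), while edges inside $S$, edges inside $R$, and any further edge incident to $R$ can be oriented so as not to affect in-neighbourhoods inside $S$. Hence proving $\old(G)\le n/2$ amounts to exhibiting a set $R$ with $|R|\ge n/2$ and an \emph{injective} assignment $w\mapsto c(w)$ of \emph{nonempty} subsets $c(w)\subseteq N(w)\cap S$. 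I would also reduce to the connected case: in a disjoint union the codes of vertices lying in different components are automatically distinct, being nonempty subsets of disjoint vertex sets, and twin-freeness together with the absence of isolated vertices is inherited by each component, so it suffices to bound every component by half of its order.

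For the construction I would start from a maximum matching $M$ of $G$, with endpoint set $B$ (so $|B|=2\mu$, where $\mu$ is the matching number) and unmatched set $U=V\setminus B$. Since $M$ is maximum, $U$ is independent and every vertex of $U$ has all its neighbours in $B$; twin-freeness then forces the neighbourhoods $N(u)$, $u\in U$, to be pairwise distinct and nonempty. I place all of $U$ into $R$ and, for each matching edge, exactly one endpoint into $R$ and one into $S$, so that $|S|=\mu\le n/2$ and $|R|=n-\mu\ge n/2$, meeting the size budget. Each matched vertex in $R$ receives the singleton code $\{a\}$, where $a\in S$ is its matched partner, and these singletons are pairwise distinct. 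It then remains to give each $u\in U$ a distinct nonempty code inside $N(u)\cap S$. As a clean warm-up, when $\mu\le n/4$ one may instead take $S=B$ and $R=U$: here $c(u)=N(u)$ lies entirely in $S$, is nonempty, and is injective by twin-freeness, while $|S|=2\mu\le n/2$ and $|R|=n-2\mu\ge n/2$, so this range is already settled.

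The main obstacle, and the heart of the argument, is the regime of large matchings, where only one endpoint of each matching edge sits in $S$. There I must simultaneously decide, for each edge, which endpoint to send to $S$ so that every $u\in U$ keeps a neighbour in $S$ (domination) and so that the restricted codes $N(u)\cap S$ stay pairwise distinct and distinct from the matched singletons. I expect to control this through an alternating/augmenting-path argument on $M$ combined with a Hall-type count: since each $u$ admits $2^{|N(u)\cap S|}-1$ possible codes, collisions can occur only among vertices whose available neighbourhood in $S$ is very small, and twin-freeness bounds how many such vertices can clash; when a singleton is contended I would either enlarge one conflicting code to a larger admissible subset or reselect the $S$-endpoint of an incident matching edge along an augmenting path. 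Checking that these local repairs can always be performed consistently — that is, that the availability bipartite graph between $R$ and the admissible nonempty codes satisfies Hall's condition — is the step I anticipate to be most delicate, and it is precisely where twin-freeness must be used in full.
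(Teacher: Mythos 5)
Your reformulation is sound: choosing an orientation together with a locating-dominating set $S$ is the same as choosing $R=V\setminus S$ and an injective assignment of nonempty codes $c(w)\subseteq N(w)\cap S$ (this is exactly the content of the paper's Lemma~\ref{spanningLemma}, phrased via spanning subgraphs), and your warm-up regime $\alpha'(G)\le n/4$ with $S=B$, $c(u)=N(u)$ is correct by twin-freeness of the independent set $U$. But the proof has a genuine gap precisely where you say you ``anticipate'' delicacy: in the regime $n/4<\alpha'(G)<n/2$ you must choose, for each matching edge, which endpoint enters $S$ so that simultaneously (i) every $u\in U$ keeps a neighbour in $S$ (domination is \emph{not} automatic: $u$ may be adjacent to one endpoint each of several matching edges, and all of those endpoints may land in $R$), and (ii) the codes of the vertices of $U$ can be made pairwise distinct and distinct from the matched singletons. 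Neither is established. Twin-freeness guarantees the \emph{full} neighbourhoods $N(u)$, $u\in U$, are distinct, but after restriction to $S$ they can collapse: take matching edges $ab$ and $x_iy_i$, $1\le i\le k$, and unmatched vertices $u_i$ with $N(u_i)=\{a,x_i\}$; if $S$ picks $a$ and the $y_i$'s, then $N(u_i)\cap S=\{a\}$ for every $i$, so $k$ vertices compete for the single code $\{a\}$ and Hall's condition fails badly. So the existence of a globally consistent choice of endpoints is itself the whole theorem, and your proposed repair mechanism is not available as stated: since $M$ is a \emph{maximum} matching there are no augmenting paths, so ``reselecting the $S$-endpoint along an augmenting path'' has no meaning, and an alternating-path exchange can create new collisions elsewhere, so some measure of progress (or a Hall-type deficiency argument) would have to be exhibited and is not.

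For comparison, the paper avoids the matching framework entirely. It proves (Lemma~\ref{spanningLemma}) that $\old(G)=\min\{\ld(H)\mid H \text{ spanning subgraph of } G\}$, then works with a spanning tree $T$ chosen to minimize $\ell(T)-s(T)$, establishes the refined tree bound $\ld(T)\le\bigl(n+\ell(T)-s(T)-sl(T)\bigr)/2$ (Theorem~\ref{LinkSupports}), and uses exchange arguments on this extremal tree (together with twin-freeness of $G$, which lets leaves hanging at a common support vertex be re-attached elsewhere) to build a locating-dominating set of size at most $n/2$ of an auxiliary spanning subgraph $G'$. The extremality of $T$ is what replaces your missing Hall-type argument: every potential collision is refuted by exhibiting a spanning tree $T''$ with $\ell(T'')-s(T'')<\ell(T)-s(T)$. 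If you want to salvage your approach, you would need to impose an analogous extremal choice on the assignment of matching endpoints and show that any obstruction to domination or injectivity contradicts it; as written, the argument does not go through.
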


The fact that any twin-free graph of order $n$ satisfies $\ld(G) \le n/2$ is a notorious conjecture, left open in~\cite{foucaudtwinfree,garijoconjecture} for instance.

\begin{conjecture}[\cite{garijoconjecture}]\label{conj:twinfree}
If $G$ is a twin-free graph of order $n$, then $\ld(G)\le n/2$.
\end{conjecture}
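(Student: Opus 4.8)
The plan is to recast Conjecture~\ref{conj:twinfree} as a covering-and-separation statement about the complement of the set one is building. Proving $\ld(G)\le n/2$ is equivalent to exhibiting a set $U\subseteq V(G)$ with $|U|\ge n/2$ that is \emph{locatable} by $S:=V(G)\setminus U$, meaning that every $u\in U$ has a neighbour in $S$ and the map $u\mapsto N(u)\cap S$ is injective on $U$; so the goal becomes building a large locatable set rather than a small dominating one. First I would peel off leaves and degree-one reductions to concentrate on the $2$-core, handling the tree case separately (this is exactly where the improved bound announced in the abstract lives), and thereby reduce to graphs of minimum degree at least two. With this reduction in place, the main engine would be a probabilistic construction with deterministic repair, in the spirit of the almost-regular bound already obtained in the paper.

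Concretely, I would place each vertex into $S$ independently with a well-chosen probability $p$ (tuned to the degree sequence) and then repair the two kinds of failure. A vertex $v\notin S$ is \emph{undominated} with probability $(1-p)^{\deg(v)+1}$, while an unordered pair $u,v\notin S$ is \emph{confused}, i.e. $N(u)\cap S=N(v)\cap S$, only if no vertex of $N(u)\triangle N(v)$ lands in $S$, which occurs with probability at most $(1-p)^{|N(u)\triangle N(v)|}$. Since moving one vertex into $S$ resolves any single failure, $\mathbb{E}[|S|]$ is bounded by $pn$ plus the expected number of undominated vertices and of confused pairs. When all degrees and all symmetric differences $|N(u)\triangle N(v)|$ are large, these error terms are $o(n)$ and the bound $n/2$ follows with room to spare; this is precisely the mechanism behind the $O(\log d/d\cdot n)$ estimate for almost-regular graphs.

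The hard part, and the reason the conjecture resists this attack in general, is the abundance of \emph{near-twins}: pairs $u,v$ with $|N(u)\triangle N(v)|=1$. Twin-freeness only forbids $|N(u)\triangle N(v)|=0$, so a twin-free graph may contain $\Theta(n)$ near-twin pairs, each confused with probability $\Theta(1)$, and repairing them all would overshoot $n/2$. I would therefore handle near-twins structurally rather than probabilistically: the near-twin relation partitions the low-degree part of $V(G)$ into clusters, and within each cluster the unique distinguishing vertex must be forced into $S$ to separate the pair. The plan is to encode these forcing constraints in an auxiliary graph and argue, via a matching or fractional-relaxation argument, that one can satisfy every separation constraint while charging at most half of each cluster to $S$, all the while maintaining domination across overlapping clusters. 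Making this global bookkeeping consistent is the crux, and it is the step I expect not to close in full generality: it is exactly the point at which the paper retreats to the orientation relaxation of Theorem~\ref{thm:n/2}, where the freedom to orient each edge lets one discard the offending co-neighbour relations and break every near-twin tie at no additional cost, yielding $\old(G)\le n/2$ even though the undirected bound of Conjecture~\ref{conj:twinfree} remains open.
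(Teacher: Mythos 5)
You have not proved the statement, and neither does the paper: Conjecture~\ref{conj:twinfree} is cited from \cite{garijoconjecture} and explicitly left open there. What the paper establishes is only the oriented relaxation $\old(G)\le n/2$ (Theorem~\ref{thm:n/2}), and it does so by a fully deterministic route quite unlike your sketch: Lemma~\ref{spanningLemma} identifies $\old(G)$ with the minimum of $\ld(H)$ over spanning subgraphs, the improved tree bound $\ld(T)\le\frac{n+\ell(T)-s(T)-sl(T)}{2}$ (Theorem~\ref{LinkSupports}) is then applied to a spanning tree $T$ chosen to minimize $\ell(T)-s(T)$, and a combinatorial twin-repair step in an auxiliary graph $G'$ finishes the argument. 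Your description of ``orienting away the offending co-neighbour relations'' is a fair paraphrase of Lemma~\ref{spanningLemma}, but your suggestion that the tree bound lives in a leaf-peeling/$2$-core reduction and that the engine of Theorem~\ref{thm:n/2} is probabilistic is inaccurate; the probabilistic argument appears only in Section~\ref{sec:regular} for $k$-almost regular graphs.

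The genuine gap in your proposal is the one you concede yourself: the decisive step --- satisfying every near-twin separation constraint while charging at most half of each cluster to $S$, ``via a matching or fractional-relaxation argument'' --- is never specified, and it is the entire content of the conjecture, so nothing is proved. Moreover, the difficulty is strictly worse than you describe. Confusion is not confined to pairs with $|N(u)\triangle N(v)|=1$: pairs with any bounded symmetric difference are confused with probability bounded away from zero, and the paper's own construction in Lemma~\ref{examplemindegree} exhibits twin-free graphs of arbitrarily large minimum degree containing $\Theta(n^2)$ pairs $u,v$ with $|N(u)\triangle N(v)|=2$ (the endpoints of distinct $P_4$'s attached to a common clique-like set), for which even the oriented parameter satisfies $\old(G)\ge(\tfrac12-\epsilon)n$. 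Hence your error terms are not $o(n)$ in general, the first-moment-plus-repair scheme cannot get below $n/2$ even granting orientation freedom, and the structural bookkeeping you defer would have to absorb essentially the whole bound rather than a lower-order correction. In short: the probabilistic part reproves (a weaker form of) what Section~\ref{sec:regular} already contains, and the structural part is a restatement of the open problem rather than a proof of it.
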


The proof of Theorem \ref{thm:n/2} holds in two steps. First, we show in Section \ref{sec:subgraph} that $\old(G)$ is the smallest undirected location-domination number among all the (connected) spanning subgraphs of $G$. Then, we prove in Section \ref{sec:n/2best} that there exists a spanning subgraph for which the condition is satisfied. In particular, our result implies a weakening of Conjecture \ref{conj:twinfree} since we prove that any twin-free connected graph $G$ on $n$ vertices admits a spanning subgraph $H$ with $\ld(H)\leq n/2$. As a side result we obtain a new improved upper bound for the location-domination number in trees. We also give a characterization for trees attaining this new upper bound.

We then focus on (almost) regular graphs in Section~\ref{sec:regular} and prove, using a probabilistic argument, that there exists a constant $c_d$ such that, if $G$ is $d$-regular, 
$$\old(G)\leq c_d \cdot \frac{\log d}{d} \cdot |V(G)|.$$
We continue this subsection by giving some bounds using independence and matching numbers.

\paragraph{Main results on worst orientations.}

In Section \ref{sec:worstbasic}, we give some examples and relate $\mold(G)$ with some classical graph parameters. In particular, we prove that $\mold(G)\geq \ld(G)$ if $G$ does not have any cycle of length $4$ as a (not necessarily induced) subgraph. 
Moreover we prove that if $G$ is a $C_4$-free bipartite graph (which in particular, contains the class of trees), then $\mold(G)=\alpha(G)$ where $\alpha(G)$ is the independence number of $G$.

In Section \ref{SecLowerboundLD}, we prove that $\mold(G)\geq \ld(G)$ is satisfied for other graph classes such as bipartite graphs, cubic graphs, and outerplanar graphs. Somehow surprisingly at first glance, $\mold(G)\geq \ld(G)$ is not always true. In \cite{foucaud2020domination}, Foucaud et al. have shown that for a complete graph $K_n$ on $n$ vertices we have $\mold(K_n)=\lceil n/2\rceil$ but $\ld(K_n)=n-1$. We prove that the existence of twins is not the reason why this inequality fails since we exhibit a family of twin-free graphs for which the ratio $\mold(G)/\ld(G)$ tends to $1/2$. We did not succeed to bound this ratio by a constant. However, we prove that $\mold(G)\geq\ld(G)/\lceil\log_2(\Delta(G))+1\rceil$. We leave  the existence of a constant bounding $\mold(G)/\ld(G)$ as an open problem. 

Finally, in Section \ref{sec:forbidden}, we provide some lower bounds on $\mold(G)$ using the number of vertices. For numerous classes of graphs, we actually have $\mold(G)\geq c_1\cdot n^{c_2}$ where $c_1$ and $c_2$ are constant. This is true for perfect graphs (with $c_2=1/2$), $C_3$-free graphs, claw-free graphs and actually for any $\chi$-bounded class of graphs with a polynomial $\chi$-bounding function. However, we leave  as an open problem the existence of a graph $G$ on $n$ vertices such that $\mold(G)$ is logarithmic on $n$.

Note that we did not find the complexity of computing $\mold(G)$. In particular, it is not clear that this problem belongs to NP.

\section{Preliminaries}\label{sec:preliminaries}
\subsection{Notations}

We give in this subsection the main definitions and notations we are using along the paper. The reader may refer to some classical graph theory books like \cite{Bondy1976} for missing definitions.

Let $G=(V,E)$ be an undirected and simple graph. We usually denote by $n$ the number of vertices of $G$.
We denote by $N_G(u)$ (or $N(u)$ when $G$ is clear from context) the \emph{open neighbourhood} of a vertex $u$, that is the set of neighbours of $u$. And we denote by $N_G[u]$ (abbreviated into $N[u]$) the \textit{closed neighbourhood} of $u$ that is $N(u) \cup \{ u \}$.  Two vertices $u$ and $v$ are {\em twins} if $N(u)=N(v)$ or $N[u]=N[v]$. The {\em degree} of a vertex $u$, denoted by $d(u)$, is the size of $N(u)$. The minimum and maximum degree of $G$ are respectively denoted by $\delta(G)$ and $\Delta(G)$. A {\em leaf} is a vertex of degree $1$.

A graph $H$ is a {\em subgraph} of $G$ if $V(H)\subseteq V(G)$ and $E(H)\subseteq E(G)$. A subgraph $H$ is {\em induced} if for any pair of vertices of $H$, $(x,y)$ is an edge of $H$ if and only if it is an edge of $G$. A graph $G$ is \emph{$H$-free} if it does not contain $H$ has an induced subgraph. We say that a graph $G$ is \emph{without $H$} if $G$ does not contain $H$ as a subgraph (not necessarily induced). 
A subgraph $H$ is a {\em spanning} subgraph if $V(H)=V(G)$.

The complete graph on $n$ vertices is denoted by $K_n$. The complete bipartite graph with size $n$ and $m$ is denoted by $K_{n,m}$. A \textit{star} is a graph isomorphic to $K_{1,m}$. The star with three leaves, $K_{1,3}$, is also called a {\em claw}. The cycle on $n$ vertices is denoted by $C_n$ whereas the path on $n$ vertices is denoted by $P_n$.
The \textit{girth} of a graph $G$ is the length of a shortest cycle in $G$. If $G$ does not contain any cycle we say that $G$ has infinite girth.
A set $S$ of vertices is \textit{independent} if they are pairwise non-adjacent. A set $S$ is an \textit{edge cover} if every edge has at least one endpoint in $S$. A set of edges $M$ is a \textit{matching} if no two edges in $M$ share an endpoint. In a graph $G$, we denote the cardinalities of maximum independent sets and matchings by $\alpha(G)$ and $\alpha'(G)$, respectively. Moreover, the cardinality of a minimum edge cover is denoted by $\beta(G)$. The clique number of a graph $G$, denoted by $\omega(G)$, is the maximal order of a complete subgraph of $G$.

Let $S$ be a subset of $V$. Set $S$ is a {\em dominating set} of $G$ if any vertex of $G$ is either in $S$ or adjacent to a vertex of $S$. The minimum size of a dominating set is denoted by $\gamma(G)$.
 We denote by $I_G(S;u)$ 
($I(u)$ for short) the set $N_G(u)\cap S$ that is the neighbours in $S$ of a vertex $u$.  Note that $S$ is a \textit{locating-dominating set} if for each vertex $u\in V(G)\setminus S$, $I(u)$ is non-empty (since $S$ is a dominating set) and for each pair of distinct vertices $u,v\in V\setminus S$, we have $I(u)\neq I(v)$. We say that a vertex $s\in S$ \emph{separates} $u$ and $v$ if $s$ is in exactly one of sets $I(u)$ and $I(v)$. Note that any locating-dominating set must intersect any pair of twins. The minimum size of a locating-dominating set of $G$ is denoted by $\ld(G)$.

\

These notions are similarly defined for directed graphs.
In this paper, we mostly consider directed graphs derived from orienting an undirected graph. A \textit{directed graph} (also called \textit{digraph}), is a pair $D = (V,E)$, where $V$ is a set whose
elements are called vertices, and $A$ is a set of ordered pairs of vertices, called \textit{arcs}. Let $G=(V,E)$ be a simple undirected graph. An \textit{orientation} of $G$ is a directed graph (\textit{oriented graph}) $D$ on $V$ where every edge $uv$ of $G$ is either oriented from $u$ to $v$ (resulting to the arc $(u,v)$ in $D$) or from $v$ to $u$ (resulting to the arc $(v,u)$).  In particular, all the directed graphs considered are oriented and simple: if $(u,v)$ is an arc then $(v,u)$ is not. The undirected graph $G$ is called the {\em underlying} graph of $D$. Unless otherwise stated, ``graph'' means ``undirected graph''. A {\em tournament} is an orientation of a complete graph.
The open out-neighbourhood and in-neighbourhood of a vertex $u$ of $D$ are denoted by $N_D^+(u)$ and $N_D^-(u)$ whereas the closed out- and in-neighbourhood are denoted by $N_D^+[u]$ and $N_D^-[u]$. The maximum out- and in-degree are denoted by $\Delta^+(G)$ and $\Delta^-(G)$.
A {\em source} is a vertex with no in-neighbours.
Locating dominating sets are defined similarly as in the undirected case by considering the in-neighbourhoods. We denote by $I_D(S;u)$ (or $I(u)$ for short) the set $N_D^-(u)\cap S$, that is,  the in-neighbours of $u$ that are in a set $S$ of vertices. The set $S$ is a locating-dominating set of $D$ if all the sets $I_D(S;u)$ are non-empty and distinct for $u\notin S$. The minimum size of a locating-dominating set of $D$, called the minimum directed location-domination number, is denoted by $\ld(D)$.

We finally recall the two main parameters that we are considering along this paper. The {\em lower directed location-domination number} of an undirected graph $G$, denoted by $\old(G)$, is the minimum directed location-domination number over all the orientations of $G$. Formally, we have
$$\old(G)=\min\{\ld(D)\mid D \text{ is an orientation of } G\}.$$
The {\em upper directed location-domination number} of an undirected graph $G$, denoted by $\mold(G)$, is the maximum directed location-domination number over all the orientations of $G$. Formally, we have
$$\mold(G)=\max\{\ld(D)\mid D \text{ is an orientation of } G\}.$$


\subsection{Preliminary results and examples}

Let $D$ be a digraph and $u$ be a non-source vertex of $D$. Then, $V(D)\setminus\{u\}$ is a locating-dominating set of $D$. In particular, for any directed graph containing at least one edge, $\LD(D)\leq n-1$. 
In \cite{foucaud2020domination}, the authors characterized those digraphs reaching this extremal value. This characterization is useful for studying the extremal values of $\old(G)$ and $\mold(G)$. A {\em directed star} is a (non-necessarily simple) directed graph such that the underlying graph is a star. A {\em bi-directed clique} is a directed graph that contains all the possible arcs between two vertices.

\begin{theorem}[\cite{foucaud2020domination}, Theorem $6$]\label{Digraphn-1}
Let $D$ be a connected (non necessarily simple) digraph of order $n \ge 2$. Then, $\ld(D) = n - 1$ if and only if at least one of the following conditions holds:
\begin{enumerate}
    \item $n=3$;
    \item $D$ is a directed star;
    \item $V(D)$ can be partitioned into three (possibly empty) sets $S_1$, $C$ and $S_2$, where $S_1$ and $S_2$ are independent sets, $C$ is a bi-directed clique, and the remaining arcs in $D$ are all the possible arcs from $S_1$ to $C \cup S_2$ and those from $C$ to $S_2$.
\end{enumerate}
\end{theorem}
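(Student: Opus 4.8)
The plan is to reformulate the extremal condition locally and then read off the structure. Since $\ld(D)\le n-1$ for every digraph with an edge (remove a non-source, as recalled above), we have $\ld(D)=n-1$ if and only if no set of the form $V\setminus\{u,v\}$ is locating-dominating. As there is no loop and only one pair of vertices is removed, $V\setminus\{u,v\}$ is locating-dominating precisely when
$$N^-(u)\setminus\{v\}\neq\emptyset,\quad N^-(v)\setminus\{u\}\neq\emptyset,\quad N^-(u)\setminus\{v\}\neq N^-(v)\setminus\{u\}.$$
Hence $\ld(D)=n-1$ if and only if \emph{every} pair $\{u,v\}$ fails at least one of these three conditions; call such a pair \emph{non-removable}. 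Writing $A_x:=N^-(x)$, I would first dispose of sources: if $u$ is a source then $A_u=\emptyset$, the first condition fails, and $\{u,v\}$ is non-removable for every $v$. Thus the whole content of the statement concerns pairs of \emph{non-source} vertices, for which the three conditions read $A_u=\{v\}$, $A_v=\{u\}$, and $A_u\setminus\{v\}=A_v\setminus\{u\}$.

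For the ``if'' direction I would simply verify non-removability in each listed case. For $n=3$ a direct check of the few connected digraphs suffices. For a directed star, any two out-leaves share the single in-neighbour (the centre) and hence satisfy $A_u\setminus\{v\}=A_v\setminus\{u\}$, while a centre–out-leaf pair leaves the leaf with empty in-neighbourhood, so the second condition fails; every pair is non-removable. For case 3 one computes the in-neighbourhoods directly: the vertices of $S_1$ are sources, each $c\in C$ has $A_c=S_1\cup(C\setminus\{c\})$, and each $s\in S_2$ has $A_s=S_1\cup C$; a short calculation gives $A_u\setminus\{v\}=A_v\setminus\{u\}$ for every pair of non-sources, so all pairs are non-removable and $\ld(D)=n-1$.

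The substance is the ``only if'' direction, which I would prove by reconstructing the partition from the pairwise conditions, assuming $n\ge 4$ so that the $n=3$ exception is set aside. The key observation is that $A_u=\{v\}$ can hold only for a vertex of in-degree exactly $1$; hence for any two non-sources both of in-degree at least $2$, the only surviving condition is $A_u\setminus\{u'\}=A_{u'}\setminus\{u\}$. Analysing this ``in-twin'' relation on the set $P$ of in-degree-$\ge 2$ non-sources, I would show that all vertices of $P$ share a common set of in-neighbours lying outside $P$, and that within $P$ each vertex sends arcs either to all others or to none: the former induce a bi-directed clique, which becomes $C$, while the latter form an independent set each receiving all of $C$. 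Separately, the pairwise condition on in-degree-$1$ non-sources forces any two with differing in-neighbours either to share that in-neighbour or to point at one another, which is exactly the dichotomy between the directed-star case and the placement of these vertices into an independent set $S_2$. I would then take $S_1$ to be the sources, $S_2$ the independent remainder, and check that the common in-set of $C$ coincides with $S_1$, that $S_1$ and $S_2$ are independent, and that all arcs prescribed in case 3 are present.

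The main obstacle is precisely disentangling the two mechanisms of non-removability. The condition $A_u=\{v\}$ creates pendant, star-like behaviour, whereas $A_u\setminus\{v\}=A_v\setminus\{u\}$ creates the twin/clique structure of case 3; one must show that unless the in-degree-$1$ non-sources all funnel into a single centre (the genuine directed star), they and the independent part of $P$ fit together into one $S_2$ with in-neighbourhood exactly $S_1\cup C$, and that no hybrid configuration survives apart from the sporadic case $n=3$. In particular, ruling out an in-degree-$1$ non-source that sends arcs into $P$ (so that the common in-set need not consist solely of sources), confirming thereby that this common set is exactly $S_1$ with $S_1,S_2$ independent, and verifying completeness of every prescribed arc set, is where the bulk of the casework lies.
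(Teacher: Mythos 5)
First, a point of reference: the paper does not prove this statement at all — it is imported verbatim from \cite{foucaud2020domination} (Theorem 6 there), so there is no in-paper proof to compare against and your attempt must be judged on its own terms. On those terms, your reformulation is correct: since supersets of locating-dominating sets are locating-dominating, $\ld(D)=n-1$ is indeed equivalent to every pair $\{u,v\}$ being non-removable, and your three pair-conditions, the dismissal of pairs containing a source, the check for directed stars, and the in-neighbourhood computation for case 3 are all sound, as is the quasi-twin analysis of the set $P$ of non-sources of in-degree at least $2$ (a common in-set outside $P$, and ``all or nothing'' out-arcs inside $P$, giving a bi-directed clique plus an independent set).

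The genuine gap is in the ``only if'' direction, which you leave as a plan, and one announced step of that plan is in fact false: you propose to \emph{rule out} an in-degree-$1$ non-source sending arcs into $P$, so as to conclude that the common in-set of $P$ is exactly the set of sources, which you then take as $S_1$ with $C$ equal to the bi-directed part of $P$. But such vertices occur in legitimate extremal digraphs. Take $S_1=\{s\}$, $C=\{c\}$, $S_2=\{x_1,\dots,x_k\}$ with $k\ge 2$, and arcs $s\to c$, $s\to x_i$, $c\to x_i$ for all $i$: this satisfies condition 3, and indeed every pair is non-removable ($s$ is a source; $A_c\setminus\{x_i\}=\{s\}=A_{x_i}\setminus\{c\}$; $A_{x_i}\setminus\{x_j\}=\{s,c\}=A_{x_j}\setminus\{x_i\}$), so $\ld(D)=n-1$ with $n=k+2\ge 4$, yet the digraph is neither a directed star nor of order $3$. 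Here $P=S_2$, the bi-directed clique inside $P$ is empty, and $c$ is precisely an in-degree-$1$ non-source with arcs into $P$; the common in-set of $P$ is $\{s,c\}$, not the set of sources. So your recipe ($S_1=$ sources, $C=$ bi-directed part of $P$, $S_2=$ independent remainder) produces an invalid partition on a valid instance; the correct argument must instead allow such vertices and absorb them into the clique $C$ of the final partition. Beyond this, the cross-pairs between in-degree-$1$ non-sources and $P$, the separation from the directed-star alternative, and the verification that \emph{all} prescribed arcs are present — which you yourself flag as ``where the bulk of the casework lies'' — are exactly the substance of the theorem and are deferred rather than carried out. As it stands, the proposal is a correct reformulation plus a correct ``if'' direction, not a proof.
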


In particular, any orientation of a star has location-domination number $n-1$.

\begin{corollary}\label{cor:star}
Let $G$ be a star on $n$ vertices. Then, $\old(G)=\mold(G)=n-1$.
\end{corollary}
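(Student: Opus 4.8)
The plan is to observe that this is an immediate consequence of Theorem~\ref{Digraphn-1}, applied uniformly to \emph{every} orientation of the star. Let $G$ be a star on $n$ vertices; since a star has a centre together with at least one leaf, we have $n \ge 2$, so the hypotheses of Theorem~\ref{Digraphn-1} are available. The key point is that the relevant structural condition depends only on the underlying graph, not on the particular choice of orientation.

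First I would fix an arbitrary orientation $D$ of $G$. By definition, the underlying graph of $D$ is $G$ itself, which is a star, so $D$ is a directed star in the sense of the definition preceding Theorem~\ref{Digraphn-1}. Moreover, since $G$ is connected, $D$ is (weakly) connected, so Theorem~\ref{Digraphn-1} applies: condition~$(2)$ is satisfied, and hence $\ld(D) = n - 1$.

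The decisive observation is that the orientation $D$ was arbitrary, so the equality $\ld(D) = n-1$ holds for \emph{all} orientations of $G$ simultaneously. Consequently both the minimum and the maximum of $\ld(D)$ over all orientations $D$ collapse to the same value, giving
\[
\old(G) = \min_{D}\ \ld(D) = n-1 = \max_{D}\ \ld(D) = \mold(G).
\]
There is essentially no obstacle here: the only point requiring a moment's care is confirming that the connectivity hypothesis of Theorem~\ref{Digraphn-1} is met for every orientation (which follows from the connectivity of the star) and that condition~$(2)$ is triggered regardless of how the individual edges are directed.
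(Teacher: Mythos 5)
Your proof is correct and follows exactly the paper's route: the paper derives this corollary directly from Theorem~\ref{Digraphn-1} by noting that every orientation of a star is a directed star (condition~$(2)$), so $\ld(D)=n-1$ for all orientations $D$, forcing $\old(G)=\mold(G)=n-1$. Your added care about connectivity and the $n\ge 2$ hypothesis is fine but does not change the argument.
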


In \cite{foucaud2020domination}, the authors also proved a tight upper bound for tournament:

\begin{theorem}[\cite{foucaud2020domination}]\label{thm:tournament}
Let $D$ be a tournament on $n$ vertices. Then, $\ld(D)\leq \lceil n/2 \rceil$. Moreover, $\ld(D)=\lceil n/2\rceil$ if $D$ is transitive.
\end{theorem}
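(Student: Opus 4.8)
The plan is to prove the two assertions separately. The bound $\ld(D)\le\lceil n/2\rceil$ is the substantial part; the lower bound $\ld(D)\ge\lceil n/2\rceil$ for a transitive $D$ (which, combined with the general upper bound, yields the claimed equality) is comparatively direct. For the structural backbone I will use Rédei's theorem: every tournament admits a directed Hamiltonian path $v_1\to v_2\to\cdots\to v_n$.

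For the upper bound I would argue by strong induction on $n$, the cases $n\le 3$ being immediate (for $n=3$, Theorem~\ref{Digraphn-1} already gives $\ld(D)=2=\lceil 3/2\rceil$). Given a Hamiltonian path $v_1\to\cdots\to v_n$, delete its last two vertices and apply induction to the sub-tournament $D'=D-\{v_{n-1},v_n\}$ on $n-2$ vertices, obtaining a locating-dominating set $S'$ with $|S'|\le\lceil (n-2)/2\rceil=\lceil n/2\rceil-1$. Now set $S=S'\cup\{v_{n-1}\}$, so $|S|\le\lceil n/2\rceil$, and declare $v_n$ to be located (i.e.\ $v_n\notin S$). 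Domination is free: every old located vertex keeps its in-neighbour in $S'\subseteq S$, and $v_n$ is dominated by $v_{n-1}$ because the path gives the arc $v_{n-1}\to v_n$. Likewise any two previously located vertices stay separated, since adding $v_{n-1}$ to $S$ can only enlarge the sets $I(\cdot)$ and cannot merge two that were distinct already inside $S'$.

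The crux---and the step I expect to be the main obstacle---is to guarantee that the reinserted vertex $v_n$ is separated from every old located vertex. If $u\to v_{n-1}$ then $v_{n-1}\in I(v_n)\setminus I(u)$ separates them; the only danger is a located vertex $u$ with $v_{n-1}\to u$ and $N^-(v_n)\cap S'=N^-(u)\cap S'$, and by the induction hypothesis there is at most one such ``conflicting'' $u$. To remove this single conflict I would exploit the freedom still available: either choose the Hamiltonian path (equivalently, the deleted pair) so that no conflict arises, or, when a conflict is unavoidable, perform a local exchange---swapping the roles of $v_{n-1}$ and $v_n$, which is legitimate precisely when $v_{n-1}$ already has an in-neighbour in $S'$, or moving the conflicting vertex itself into $S$ while releasing a now-redundant vertex. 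Making this case analysis airtight, so that in every configuration one of these moves both preserves the size bound and restores separation, is where the real work lies; morally it reflects the fact that backward arcs of the tournament only ever help to distinguish vertices, so the transitive tournament should be the extremal (hardest) case.

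For the transitive tournament this intuition becomes a clean argument, and it suffices to prove $\ld(D)\ge\lceil n/2\rceil$. Label the vertices so that $v_i\to v_j$ exactly when $i<j$; then $N^-(v_i)=\{v_1,\dots,v_{i-1}\}$, so the in-neighbourhoods are nested prefixes and $I(v_i)=\{v_1,\dots,v_{i-1}\}\cap S$. The source $v_1$ has empty in-neighbourhood, hence $v_1\in S$ in every locating-dominating set. For two located vertices $v_i,v_j$ with $i<j$ one has $I(v_i)\subseteq I(v_j)$, with equality if and only if no element of $S$ lies in positions $i+1,\dots,j-1$; consequently two located vertices in consecutive positions always collide, while forbidding consecutive located vertices forces every located vertex to be immediately followed by a vertex of $S$, which makes all in-neighbourhoods distinct. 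Thus the located set $V\setminus S$ must be an independent set of the path $v_1v_2\cdots v_n$ avoiding $v_1$, and such a set has size at most $\lfloor n/2\rfloor$; hence $|S|\ge n-\lfloor n/2\rfloor=\lceil n/2\rceil$. Together with the upper bound this gives $\ld(D)=\lceil n/2\rceil$ for transitive $D$, and the alternating choice $S=\{v_1,v_3,v_5,\dots\}$ shows the bound is attained.
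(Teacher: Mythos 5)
First, note that the paper itself never proves Theorem~\ref{thm:tournament}: it is imported from \cite{foucaud2020domination}, so your argument has to stand entirely on its own. The transitive half of your proposal does stand: the in-neighbourhoods are nested prefixes, two located vertices in consecutive positions collide, the source must belong to any locating-dominating set, hence the located vertices form an independent set of the position-path avoiding $v_1$, giving $|S|\ge n-\lfloor n/2\rfloor=\lceil n/2\rceil$, and the alternating set attains this. That part is correct and complete.

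The substantial half, $\ld(D)\le\lceil n/2\rceil$ for an arbitrary tournament, is however not proved. Your induction correctly reduces everything to one obstruction --- the (at most one, by distinctness of the $I_{S'}$-sets) located vertex $u$ with $v_{n-1}\to u$ and $N^-(u)\cap S'=N^-(v_n)\cap S'$ --- but the resolution of this conflict is only sketched, and none of the three proposed fixes is shown to work; indeed each can fail. Swapping the roles of $v_{n-1}$ and $v_n$ needs $v_{n-1}$ to have an in-neighbour in $S'$, which nothing guarantees (all in-neighbours of $v_{n-1}$, including $v_{n-2}$, may be located), and even when available the swap only trades the conflict for a possible new one: now $I_S(v_{n-1})=N^-(v_{n-1})\cap S'$ may coincide with $I_{S'}(w)$ for some located $w$ with $w\to v_n$, and nothing excludes that both configurations are simultaneously conflicting. ``Moving the conflicting vertex into $S$ while releasing a now-redundant vertex'' is not defined (which vertex is redundant, and why does deleting it preserve domination and separation?), and the existence of a Hamiltonian path, or of a deleted pair, creating no conflict is asserted but not established. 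Since you yourself flag this case analysis as ``where the real work lies,'' what you have is a plausible plan plus a correct treatment of the extremal example, not a proof of the theorem; the missing case analysis is precisely the heart of the statement.
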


As a consequence, the upper directed location-domination number of complete graphs is known:

\begin{corollary}\label{cor:complete}
Let $n\geq 2$ be an integer. Then, $\mold(K_n)=\lceil n/2 \rceil$.
\end{corollary}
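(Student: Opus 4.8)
The plan is to derive Corollary~\ref{cor:complete} directly from Theorem~\ref{thm:tournament}, since a tournament is precisely an orientation of $K_n$, and so $\mold(K_n) = \max\{\ld(D) \mid D \text{ is a tournament on } n \text{ vertices}\}$. The theorem gives two facts: every tournament $D$ satisfies $\ld(D) \le \lceil n/2\rceil$, and the transitive tournament attains $\ld(D) = \lceil n/2\rceil$. The maximum over all orientations is therefore squeezed between these two bounds.

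\begin{proof}
Since $K_n$ is a complete graph, its orientations are exactly the tournaments on $n$ vertices. By Theorem~\ref{thm:tournament}, every such orientation $D$ satisfies $\ld(D) \le \lceil n/2 \rceil$, so $\mold(K_n) \le \lceil n/2 \rceil$. Conversely, the transitive tournament $T$ on $n$ vertices is an orientation of $K_n$ with $\ld(T) = \lceil n/2 \rceil$, again by Theorem~\ref{thm:tournament}, and hence $\mold(K_n) \ge \ld(T) = \lceil n/2 \rceil$. Combining the two inequalities yields $\mold(K_n) = \lceil n/2 \rceil$.
\end{proof}

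There is essentially no obstacle here: the statement is a one-line consequence of the fact that orientations of $K_n$ coincide with tournaments, together with the upper bound and the transitive witness already supplied by Theorem~\ref{thm:tournament}. The only thing worth checking is the identification ``orientations of $K_n$ = tournaments,'' which holds because an orientation assigns a direction to each edge of the complete graph and the definition of a tournament is exactly an orientation of a complete graph, as noted earlier in the preliminaries.
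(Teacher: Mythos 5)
Your proof is correct and matches the paper's intended argument exactly: the paper states Corollary~\ref{cor:complete} as an immediate consequence of Theorem~\ref{thm:tournament}, using precisely the two facts you cite (the $\lceil n/2\rceil$ upper bound for all tournaments and the transitive tournament as the extremal witness). Nothing is missing.
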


Concerning the best orientation of a complete graph, Skaggs  proved in his thesis \cite{skaggs2007identifying} that one can obtain the best possible number for $\old(G)$. For the sake of completeness, we add a short proof of this result.

\begin{theorem}[\cite{skaggs2007identifying}, Proposition 5.4]\label{thm:clique}
Let $n\geq 2$ be an integer. Let $k$ be the smallest integer such that $n\leq k+2^k-1$. Then, $\old(K_n)=k$.
\end{theorem}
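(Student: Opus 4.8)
The plan is to prove the two inequalities $\old(K_n)\geq k$ and $\old(K_n)\leq k$ separately, both resting on a single observation: because $K_n$ is complete, for any vertex $w$ outside a chosen set $S$ and any $s\in S$, the edge $sw$ is present and its orientation alone decides whether $s\in I(w)$. Hence the in-neighbourhood $I(w)=N^-(w)\cap S$ can be made to equal \emph{any} prescribed subset of $S$, independently for each $w$, and these choices are the only thing constraining the locating-dominating condition (edges inside $S$, and edges among the vertices outside $S$, are irrelevant to whether $S$ locates and dominates).

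For the lower bound I would argue by counting. Fix any orientation $D$ of $K_n$ and any locating-dominating set $S$ with $|S|=s$. Each of the $n-s$ vertices outside $S$ receives a non-empty in-neighbourhood $I(w)\subseteq S$ (domination), and these sets are pairwise distinct (location). Since a set of size $s$ has exactly $2^s-1$ non-empty subsets, this forces $n-s\leq 2^s-1$, i.e.\ $n\leq s+2^s-1$. The function $f(s)=s+2^s-1$ is strictly increasing, and by definition $k$ is the smallest integer with $n\leq f(k)$, so $f(k-1)<n$; therefore $s<k$ would give $n\leq f(s)\leq f(k-1)<n$, a contradiction. Thus every locating-dominating set of every orientation has size at least $k$, whence $\old(K_n)\geq k$.

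For the upper bound I would exhibit a single orientation attaining $k$. Choose any $S\subseteq V(K_n)$ with $|S|=k$. By minimality of $k$ we have $n-k\leq 2^k-1$, so there are enough non-empty subsets of $S$ to assign, injectively, a distinct non-empty subset $A_w\subseteq S$ to each of the $n-k$ vertices $w\notin S$. Now orient the edges between $S$ and the outside vertices so as to realize these: for each $s\in S$, orient $s\to w$ if $s\in A_w$ and $w\to s$ otherwise; orient all remaining edges arbitrarily. Then $I(w)=A_w$ for every $w\notin S$, so the $I(w)$ are non-empty and pairwise distinct, making $S$ a locating-dominating set of size $k$ in this orientation. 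Hence $\old(K_n)\leq k$, and combining the two bounds gives $\old(K_n)=k$. I do not anticipate a real obstacle here; the only point deserving care is making explicit that completeness of $K_n$ lets each $I(w)$ be chosen freely and independently, which is precisely what makes both the counting bound and the matching construction tight.
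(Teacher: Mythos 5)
Your proposal is correct and its upper-bound construction (fix $S$ of size $k$, injectively assign distinct non-empty subsets of $S$ to the outside vertices, and orient the $S$-to-outside edges accordingly) is exactly the paper's proof. The only difference is that you also spell out the counting lower bound $n-s\leq 2^s-1$, which the paper leaves implicit; that part is standard and your argument for it is sound (just note that $n-k\leq 2^k-1$ follows from $k$ satisfying its defining inequality, while minimality is what gives $f(k-1)<n$).
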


\begin{proof}
Let $S$ be a set of $k$ vertices of $K_n$.
Then, consider an injective map $f$ from the other vertices of $K_n$ (there are at most $2^k-1$ of them) to the non-empty subsets of $S$. 
Let $u\notin S$ and $v\in S$.  Orient edge $uv$ from $v$ to $u$ if $v\in f(u)$ and from $u$ to $v$ otherwise. Orient all the other edges in any direction.
Then, $S$ is a locating-dominating set for this orientation of $K_n$.
\end{proof}

\section{Best orientation}\label{sec:best}
In this section we focus on the best orientation. We first give basic results and links with classical parameters. Then, we give another definition of $\old(G)$ using spanning subgraphs and use this definition to show that $\old(G)\leq n/2$ if $G$ is twin-free. We finally improve this last result in the case of almost regular graphs.

\subsection{Basics}

\begin{theorem}\label{BestDirLDUppBound}
Let $G$ be a graph of order $n$. Then, 
\begin{enumerate}
    \item  \cite[Proposition 5.3]{skaggs2007identifying} $\old(G)\leq \ld(G)$.
    \item $\old(G)\leq n-\alpha'(G)$.
\end{enumerate}
\begin{proof}
Claim $(1)$ is proved in \cite{skaggs2007identifying}, for completeness, we include a short proof here. Consider a graph $G$ and a locating dominating set $S$ of size $\ld(G)$ of $G$. Then, orient all the edges $uv$ between $S$ and $V\setminus S$ from $S$ to $V\setminus S$ and all the other edges in any way. Then, $S$ is a locating-dominating set for this orientation.

Let us next prove $(2)$. Let $G$ be a graph on $n$ vertices and let $M$ be a maximum matching of $G$. Let $V_M$ be a subset of vertices containing exactly one vertex from each edge of $M$ and $C_M$ be the set of vertices which are not endpoints of edges in $M$. Let $C=V_M\cup C_M$. Note that $|C|=n-\alpha'(G)$. Choose any orientation $D'$ of $G$ where the edges in $M$ have their tails in $C$ and all the other edges between $V\setminus C$ and $C$ are oriented from $V\setminus C$ to $C$.
Now, $C$ is a locating-dominating set in $D'$ since all the vertices of $V\setminus C$ have exactly one in-neighbour in $V_M$ and all of them are pairwise distinct.
\end{proof}
\end{theorem}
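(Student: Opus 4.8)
The plan is to prove each inequality by exhibiting a single well-chosen orientation of $G$ together with an explicit locating-dominating set; since $\old(G)$ is the minimum of $\ld(D)$ over all orientations $D$, any such witness immediately yields an upper bound. So in both parts I would not optimize over orientations but rather name one orientation and one set that I can check by hand.

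For (1), which is Skaggs' inequality, I would start from an optimal undirected locating-dominating set $S$ with $|S|=\ld(G)$ and simply push all boundary edges outward: orient every edge between $S$ and $V\setminus S$ from $S$ towards $V\setminus S$, and orient the remaining edges arbitrarily. In the resulting orientation $D$, each vertex $u\notin S$ satisfies $I_D(S;u)=N_G(u)\cap S=I_G(S;u)$, so the non-emptiness and pairwise distinctness of these in-neighbourhoods are inherited directly from the undirected locating property of $S$. Hence $S$ is locating-dominating in $D$ and $\old(G)\le\ld(D)\le\ld(G)$.

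For (2), the idea is to let a maximum matching do the separating work. I would fix a maximum matching $M$ with $|M|=\alpha'(G)$, choose one endpoint of each matching edge to lie outside the code, and let $C$ consist of the other endpoint of each matching edge together with every vertex missed by $M$; a quick count gives $|C|=n-\alpha'(G)$ and $|V\setminus C|=\alpha'(G)$. Then I would orient each matching edge from its $C$-endpoint toward its $(V\setminus C)$-endpoint, orient every \emph{non}-matching edge joining $C$ and $V\setminus C$ from $V\setminus C$ into $C$, and leave edges inside $C$ or inside $V\setminus C$ arbitrary. The only real obstacle is verifying that $C$ is locating-dominating here, and this is exactly where the orientation of the non-matching boundary edges into $C$ matters: by construction each $w\in V\setminus C$ receives precisely one arc from $C$, namely from its matching partner, so $I_D(C;w)$ is a singleton; distinct vertices of $V\setminus C$ have distinct matching partners, so these in-neighbourhoods are non-empty and pairwise distinct, with no collisions possible since no other $C$-to-$(V\setminus C)$ arcs exist. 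This makes $C$ a locating-dominating set of $D$, giving $\old(G)\le\ld(D)\le|C|=n-\alpha'(G)$.
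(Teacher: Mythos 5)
Your proposal is correct and follows essentially the same route as the paper's proof: for (1) the same outward orientation of the boundary edges of an optimal undirected locating-dominating set, and for (2) the same code $C$ built from one endpoint of each matching edge plus the unmatched vertices, with matching edges oriented out of $C$ and all other boundary edges oriented into $C$, so that each vertex outside $C$ is identified by the singleton consisting of its matching partner. The only difference is presentational: you spell out the verification of distinctness slightly more explicitly than the paper does.
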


We show that these bounds are tight in Corollary \ref{extremalOLD} and Theorem \ref{c4freeBestLD}.
Using Theorem~\ref{Digraphn-1}, we next provide a characterization of graphs reaching the extremal value $\old(G)=n-1$.

\begin{corollary}\label{extremalOLD}
For any connected graph $G$ of order $n \ge 2$, $\old(G)=n-1$ if and only if either $n=3$ or $G$ is a star.
\end{corollary}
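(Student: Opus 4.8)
The plan is to reformulate the condition $\old(G)=n-1$ and then split into the two directions, using the upper bound $\old(G)\le n-\alpha'(G)$ from Theorem~\ref{BestDirLDUppBound}(2) as the main engine. Since $G$ is connected with $n\ge 2$ it has an edge, so every orientation $D$ of $G$ satisfies $\ld(D)\le n-1$; consequently $\old(G)=n-1$ is equivalent to saying that \emph{every} orientation $D$ of $G$ attains $\ld(D)=n-1$.

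For the ``if'' direction I would treat the two cases separately. If $G$ is a star, then $\old(G)=n-1$ is exactly Corollary~\ref{cor:star}. If $n=3$, then for every orientation $D$ the underlying graph is the connected graph $G$, so $D$ is a connected digraph on $3$ vertices and condition~(1) of Theorem~\ref{Digraphn-1} forces $\ld(D)=n-1=2$; taking the minimum over all orientations gives $\old(G)=2=n-1$.

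The substance is the ``only if'' direction, which I would prove by contraposition: assuming $n\neq 3$ and $G$ not a star, I show $\old(G)\le n-2$. By Theorem~\ref{BestDirLDUppBound}(2) it suffices to prove $\alpha'(G)\ge 2$. Suppose not; since $G$ is connected and has an edge, $\alpha'(G)=1$, meaning no two edges of $G$ are disjoint, i.e.\ the edges of $G$ pairwise intersect. The key (and only delicate) step is the classical structural fact that a family of pairwise-intersecting $2$-element sets either has a common element or consists of exactly the three edges of a triangle. Translated back, a connected graph with $\alpha'(G)=1$ is either a star or $K_3$; the first is excluded by hypothesis and the second forces $n=3$, also excluded. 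Hence $\alpha'(G)\ge 2$ and $\old(G)\le n-\alpha'(G)\le n-2<n-1$, completing the contraposition.

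The main obstacle is this matching-number dichotomy, i.e.\ verifying that connected graphs with matching number $1$ are exactly the stars and the triangle. I would prove it directly: fix an edge $\{a,b\}$, so that every other edge meets $\{a,b\}$. If all edges contain $a$ (or all contain $b$), then connectedness forces $G$ to be a star centered at that vertex. Otherwise some edge avoids $a$ and some edge avoids $b$; intersecting these against $\{a,b\}$ and against each other pins the vertex set down to a set $\{a,b,c\}$ carrying all three edges, and then any further edge must also lie inside $\{a,b,c\}$, so $G=K_3$ and $n=3$. Everything else is a routine assembly of the cited results.
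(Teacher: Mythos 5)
Your proof is correct, but it follows a genuinely different route from the paper's for the substantive (``only if'') direction. The paper exploits the full characterization of Theorem~\ref{Digraphn-1}: if $\old(G)=n-1$ then \emph{every} orientation $D$ of $G$ has $\ld(D)=n-1$, hence admits the $S_1$/$C$/$S_2$ partition of condition~(3); since $D$ is oriented, $|C|\le 1$, and a case analysis on $|C|$ shows that either $G$ is a star or $G$ contains a triangle or a cycle, in which case one exhibits an orientation with a directed cycle, contradicting Theorem~\ref{Digraphn-1}. You instead combine the matching bound $\old(G)\le n-\alpha'(G)$ of Theorem~\ref{BestDirLDUppBound}(2) with the elementary dichotomy that a connected graph with $\alpha'(G)=1$ is a star or $K_3$; your proof of that dichotomy is sound, and since $K_2=K_{1,1}$ counts as a star under the paper's definition, the case $n=2$ is covered. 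What your route buys: it is shorter, avoids the structural case analysis entirely, and makes explicit that bound~(2) of Theorem~\ref{BestDirLDUppBound} is attained with value $n-1$ exactly by the connected graphs of matching number~$1$, which is precisely the tightness claim the paper attaches to this corollary. What the paper's route buys: it describes the structure forced on every orientation of an extremal graph, not merely the value of the minimum, and it reuses the characterization machinery already stated. Both proofs treat the ``if'' direction essentially identically; if anything, your handling of $n=3$ via condition~(1) of Theorem~\ref{Digraphn-1} is more precise than the paper's, which cites Corollary~\ref{cor:star} for both cases even though that corollary only covers stars.
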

\begin{proof}
Let $G$ be a graph of order $n\geq 2$ with $\old(G)=n-1$. If either $n=3$ or $G$ is a star, then, $\old(G)=n-1$ by Corollary \ref{cor:star}. 

Otherwise, let $D$ be an orientation of $G$. Since $\LD(D)\leq n-1$ we must actually have $\LD(D)=n-1$. Since $G$ is not at star, then $D$ must have the structure of the third condition of Theorem \ref{Digraphn-1}.

Thus, $V(G)$ can be partitioned to sets $S_1$, $C$ and $S_2$ satisfying the third condition of Theorem \ref{Digraphn-1}. Since $C$ is a bi-directed clique in Theorem \ref{Digraphn-1}, we have $|C|\leq1$ because $D$ is an oriented graph. Assume first that $|C|=1$. If $S_1$ or $S_2$ are empty, then $G$ is a star. If both of them are not empty, then $G$ contains a triangle and there is an orientation $D'$ of $G$ with an oriented cycle. Then, by Theorem \ref{Digraphn-1}, $\LD(D')\leq n-2$, a contradiction. 

If $C=\emptyset$, then $G$ is a star if either $|S_i|=1$ for $i\in \{1,2\}$ and disconnected if either is an emptyset. But if $|S_i|\geq2$, then again $G$ contains a cycle and an orientation with an oriented cycle which is against the conditions of Theorem \ref{Digraphn-1}. Hence, the claim follows.
\end{proof}

Theorem \ref{BestDirLDUppBound} ensures that, for every graph $G$, $\old(G)\leq\ld(G)$. Let us prove that if $G$ is without $C_4$ as a (not necessarily induced) subgraph, then it is actually an equality.

\begin{theorem}\label{c4freeBestLD}
Let $G$ be a  graph without $C_4$ as a subgraph. Then, $$\old(G)=\ld(G).$$
\end{theorem}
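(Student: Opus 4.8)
The plan is to establish the reverse inequality $\old(G)\geq\ld(G)$, since Theorem~\ref{BestDirLDUppBound}(1) already supplies $\old(G)\leq\ld(G)$. The natural strategy is to take an arbitrary orientation $D$ of $G$ and an arbitrary locating-dominating set $S$ of $D$, and to show that the very same set $S$ is a locating-dominating set of the underlying undirected graph $G$. If this holds, then $\ld(G)\leq|S|$ for every choice of $D$ and every directed locating-dominating set of $D$; minimizing over $D$ gives $\ld(G)\leq\old(G)$, and combined with part (1) this yields the claimed equality. The point to keep in mind throughout is that every in-neighbour in $D$ is a neighbour in $G$, so $I_D(S;u)\subseteq I_G(S;u)$ for each $u\notin S$.

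The domination condition is immediate: for each $u\notin S$ we have $I_D(S;u)\neq\emptyset$, so $u$ has an in-neighbour in $S$, which is in particular an undirected neighbour; hence $S$ dominates $G$. The substance of the proof lies in verifying the separation (locating) property undirectedly, namely that $I_G(S;u)\neq I_G(S;v)$ for all distinct $u,v\in V\setminus S$. I would argue this by contradiction, assuming $I_G(S;u)=I_G(S;v)=:T$.

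This is where the hypothesis that $G$ contains no $C_4$ as a subgraph is used, and it is the crux of the argument. If $|T|\geq 2$, pick two distinct vertices $w_1,w_2\in T$; then $w_1,w_2$ are common neighbours of both $u$ and $v$ in $G$, and since $w_1,w_2\in S$ while $u,v\notin S$ the four vertices are pairwise distinct. Consequently $u\,w_1\,v\,w_2\,u$ is a $4$-cycle in $G$ (as a subgraph), contradicting $C_4$-freeness. Therefore $|T|\leq 1$; but $\emptyset\neq I_D(S;u)\subseteq T$ forces $|T|=1$, say $T=\{w\}$. Then $I_D(S;u)$ and $I_D(S;v)$ are both nonempty subsets of $\{w\}$, so both equal $\{w\}$. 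This contradicts the fact that $S$ locates $D$, where $I_D(S;u)\neq I_D(S;v)$ for distinct $u,v\notin S$.

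The main obstacle is precisely this interplay: two vertices outside $S$ could in principle share the same undirected neighbourhood while having distinct directed in-neighbourhoods, and one must rule this out. Forbidding $C_4$ does exactly that, since the only way to merge undirected neighbourhoods (sharing at least two vertices of $S$) creates a forbidden $4$-cycle, while sharing a single vertex collapses the directed in-neighbourhoods as well. Having shown $S$ is a locating-dominating set of $G$, the inequality $\ld(G)\leq\old(G)$ follows, completing the proof.
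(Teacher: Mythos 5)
Your proof is correct and follows essentially the same route as the paper: both arguments take an arbitrary orientation $D$ with a locating-dominating set $S$, show $S$ remains locating-dominating in $G$ by ruling out $|I_G(S;u)|=|I_G(S;v)|\geq 2$ via the forbidden $C_4$ (the common neighbours $u,w_1,v,w_2$ form one), and handle the remaining case $|I_G(S;u)|=1$ by observing it forces $I_D(S;u)=I_D(S;v)$, contradicting location in $D$. The only difference is cosmetic bookkeeping; your write-up is, if anything, slightly more careful in noting that the four vertices are pairwise distinct and that the $C_4$ is a subgraph rather than an induced cycle.
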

\begin{proof}
To prove this equality, let us show that, any locating-dominating set $S$ of an orientation $D$ of a graph $G$ is also a locating-dominating set for $G$. Let $D$ be an arbitrary orientation of $G$ and $S$ be a locating-dominating set of $D$. First note that $S$ is indeed a dominating set of $G$. Thus, if $S$ is not locating-dominating in $G$, then there exist $u,v\not\in S$ such that $I_G(u)=I_G(v)$. Moreover, we have $|I_G(u)|=|I_G(v)|\geq2$ since $|I_G(u)|\geq|I_D(u)|$ and $|I_G(v)|\geq|I_D(v)|$. Thus, if $|I_G(u)|=1$, then $|I_G(v)|=|I_D(v)|=|I_D(u)|=1$ and hence, $I_D(u)=I_G(u)=I_G(v)=I_D(v)$, a contradiction. Let $\{c_1,c_2\}\subseteq I_G(u)$. But then $u,c_1,v$ and $c_2$ induce a cycle on four vertices, a contradiction. 
\end{proof}

In particular, Theorem \ref{c4freeBestLD} means that $\old(T)=\ld(T)$ for any tree $T$. Let us complete this warm-up part by proving that finding the value of $\old(G)$ is NP-hard.

\medskip
\noindent\decisionpb{\PBLD}{A graph $G$, an integer $k$.}{Is it true that $\ld(G)\leq k$?}{0.5}
\noindent\decisionpb{\PBOLD}{A graph $G$, an integer $k$.}{Is it true that $\old(G)\leq k$?}{0.5}\\[0.5em]

\begin{theorem}\label{NP bestLD}
\PBLD{} and \PBOLD{} are NP-com\-plete for planar graphs of maximum degree 5 without $C_4$ as a subgraph.
\end{theorem}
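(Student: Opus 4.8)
The plan is to first settle membership in NP, then to use Theorem~\ref{c4freeBestLD} to collapse the two problems into a single one on the target class, and finally to prove NP-hardness of \PBLD{} on planar graphs of maximum degree $5$ without $C_4$ by a gadget reduction.

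\textbf{Membership in NP.} A locating-dominating set $S$ with $|S|\le k$ is a polynomial-size certificate for \PBLD{}, checkable in polynomial time, so \PBLD{} is in NP. For \PBOLD{}, a certificate consists of an orientation $D$ of $G$ \emph{together with} a locating-dominating set $S$ of $D$ with $|S|\le k$; both objects have size polynomial in $|V(G)|$, and the fact that $S$ is a locating-dominating set of $D$ is verifiable in polynomial time, so \PBOLD{} is in NP as well. (Note this is in sharp contrast with the worst-orientation parameter $\mold$, for which no such certificate is evident.)

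\textbf{Reducing the two problems to one.} Since the graphs we construct will contain no $C_4$ as a subgraph, Theorem~\ref{c4freeBestLD} gives $\old(G)=\ld(G)$ on every such instance. Hence the questions ``$\ld(G)\le k$?'' and ``$\old(G)\le k$?'' have exactly the same answer on the target class, and a single reduction producing $C_4$-free planar graphs of maximum degree $5$ establishes NP-hardness of both \PBLD{} and \PBOLD{} at once.

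\textbf{The hardness reduction.} I would reduce from a known NP-complete restriction of \PBLD{} (e.g.\ \PBLD{} on planar graphs, or a planar bounded-degree variant; alternatively from \textsc{Planar 3-SAT}). Starting from an instance $G$, I would apply two local transformations. First, to \emph{force} vertices into every locating-dominating set while controlling degrees, I attach small forcing gadgets: a designated attachment vertex equipped with a short pendant structure (for instance a vertex carrying two private leaves, or a pendant path) whose only locating-dominating solutions must contain that attachment vertex. This is the standard device making $\ld$ of the modified graph track a covering parameter of $G$, and it caps the degree increase so that the maximum degree stays at $5$. Second, to destroy every $4$-cycle while preserving planarity, I subdivide the remaining edges a fixed number of times so that the girth exceeds $4$; each subdivision vertex has degree $2$, so this never raises the maximum degree. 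One then proves an equivalence $\ld(G)\le k \iff \ld(G')\le f(k)$ for an explicitly computable $f$, by analysing how each gadget and each subdivided edge contributes a fixed, forced amount to any optimal locating-dominating set.

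\textbf{Main obstacle.} The crux is engineering the gadgets so that simultaneously (i) planarity is preserved, (ii) the maximum degree never exceeds $5$, (iii) \emph{no} $C_4$ appears, even across gadget boundaries and between a gadget and the original edges, and (iv) each gadget forces a predictable number of vertices into every optimal solution so that the total $\ld$ encodes the source answer exactly. Condition (iii) is the delicate one: separation of neighbourhoods and tight domination arguments are most naturally realised using short cycles, so one must replace those short cycles by longer paths and pendant forcing leaves while keeping the count of forced vertices exact. Verifying that each subdivision alters $\ld$ in a cleanly computable way, and that the forcing leaves are not themselves separated ``for free'' by neighbours inside the original graph, is where the bulk of the careful case analysis will lie.
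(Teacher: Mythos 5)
Your NP-membership argument and your use of Theorem~\ref{c4freeBestLD} to collapse \PBOLD{} into \PBLD{} on $C_4$-free instances are exactly the paper's first two steps. The genuine gap is the third step: you never actually give the hardness reduction. What you provide is a plan --- attach unspecified forcing gadgets, subdivide edges to kill $4$-cycles, then prove an equivalence $\ld(G)\le k \iff \ld(G')\le f(k)$ --- and you yourself flag that making the gadgets simultaneously preserve planarity, keep the maximum degree at $5$, avoid creating any $C_4$, and force an exact contribution to every optimal solution is ``where the bulk of the careful case analysis will lie''. That case analysis \emph{is} the NP-hardness proof, and it is missing. Two of its ingredients are moreover doubtful as stated: a vertex carrying two private leaves forces one of the two \emph{leaves} into the solution (they are twins), not the attachment vertex, so the forcing is not what you claim; and subdividing edges does not change $\ld$ by a ``fixed, forced amount'' per edge in general, so the explicitly computable $f$ you invoke is not available without substantial further argument. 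Finally, your proposed source problem (\PBLD{} on planar bounded-degree graphs) is itself something whose NP-completeness under the needed restrictions you would have to establish first.

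The paper sidesteps all of this by reducing from \PBD{}, which is known to be NP-complete on planar graphs of maximum degree $3$ and girth at least $5$ \cite{zvervich1995induced}, via the pendant-triangle construction of Gravier et al.~\cite{Gravier08}: attach a triangle to every vertex of $G$ to obtain $G_{\triangle}$; then $\gamma(G)\le k$ if and only if $\ld(G_{\triangle})\le k+n$. Because the base instance has girth at least $5$ and the only added cycles are triangles whose two new vertices have degree $2$, the graph $G_{\triangle}$ contains no $C_4$, remains planar, and has maximum degree $3+2=5$; the two new vertices of each triangle are twins, so each triangle forces exactly one vertex, which is what makes the count exact. Adopting this known reduction (or re-deriving it) is the concrete step your proposal still needs.
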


\begin{proof}
Both problems are in NP. For \PBOLD, a polynomial certificate for $\old(G)\leq k$ is an orientation $D$ of $G$ and a locating-dominating set of $D$ of size at most $k$.

By Theorem~\ref{c4freeBestLD}, both values are equal in the class of graphs without $C_4$. Thus, we just prove the result for \PBLD. We reduce it from \PBD.

\medskip
\noindent\decisionpb{\PBD}{A graph $G$, an integer $k$.}{Is it true that $\gamma(G)\leq k$?}{0.5}
\medskip

We use the reduction of Gravier {\em et al.} \cite[Figure 7]{Gravier08}. Consider an instance $(G,k)$ of \PBD.  Let $G_{\triangle}$ be the graph obtained by adding to each vertex of the graph a pendant triangle (see Figure \ref{fig:reduction}).
Then it is proved in \cite{Gravier08} that $G$ has a dominating set of size $k$ if and only if $G_{\triangle}$ has a locating-dominating set of size $k+n$ (where $n$ is the number of vertices of $G$). Indeed, each triangle must contain at least one of the new vertices in a locating-dominating set and if there is exactly one vertex in a triangle, the vertex of the original graph must be dominated in the original graph.

\PBD{} has been proved to be NP-complete even for planar graphs of maximum degree 3 and girth at least 5 \cite{zvervich1995induced}. If $G$ is planar of maximum degree 3 and girth at least 5, then $G_{\triangle}$ is planar, of maximum degree 5, and does not contain $C_4$ as a subgraph. This implies our result.

\begin{figure}
    \centering
    \begin{tikzpicture}
    \draw (0,0) ellipse (0.5cm and 2cm) node {$\vdots$};
    \draw (0,1.2) node[vertexstyle](u) {};
       \draw (0,-1.2) node[vertexstyle] {};
    \draw (0,-2.5) node{$G$};
    
    \path[draw, ->, line width=1.5pt] (1.3,0) to (1.7,0);
    \begin{scope}[shift={(3,0)}]
     \draw (0,0) ellipse (0.5cm and 2cm) node {$\vdots$};
    \draw (0,1.2) node[vertexstyle](u) {};
       \draw (0,-1.2) node[vertexstyle](v) {};
 \node[vertexstyle](tu) at (0.7,1.5) {};
 \node[vertexstyle](su) at (0.7,0.9) {};
    \node[vertexstyle](tv) at (0.7,-0.9) {};
    \node[vertexstyle](sv) at (0.7,-1.5) {};
    
    \draw[edgestyle] (u)--(tu)--(su)--(u);
     \draw[edgestyle] (v)--(tv)--(sv)--(v);
    
    \draw (0,-2.5) node{$G_{\triangle}$};
    \end{scope}
    \end{tikzpicture}
    \caption{Reduction from \PBD{} to \PBLD.}
    \label{fig:reduction}
\end{figure}

\end{proof}

\subsection{Relation to spanning subgraphs}\label{sec:subgraph}

 In this section, we prove a simple but important lemma that links $\old(G)$ with optimal locating-dominating sets of spanning subgraphs of $G$. This result is used to prove several important results all along the section, but we illustrate its interest by first giving several simple lower bounds on $\old(G)$.

\begin{lemma}\label{spanningLemma}
Let $G$ be an undirected graph. Then,
$$\old(G)=\min\{\ld(H)\mid H\text{ is a spanning subgraph of } G\}.$$
\end{lemma}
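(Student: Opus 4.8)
The plan is to establish the claimed equality by proving the two inequalities separately; in each direction I fix a single well-chosen vertex set $S$ and exploit the fact that, for locating-domination, only the in-neighbours (in the directed case) or neighbours (in the undirected case) that lie in $S$ are relevant, so that all other edges and arcs can be treated freely.

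For the inequality $\old(G)\le\min_H\ld(H)$, I would start from a spanning subgraph $H$ of $G$ together with a locating-dominating set $S$ of $H$ with $|S|=\ld(H)$, and build an orientation $D$ of $G$ tailored to $S$. Every edge $su$ of $G$ with $s\in S$ and $u\notin S$ is oriented from $s$ to $u$ if $su\in E(H)$, and from $u$ to $s$ otherwise; all remaining edges, namely those with both endpoints in $S$ and those with both endpoints in $V\setminus S$, are oriented arbitrarily. The point of this choice is that $I_D(S;u)=N_H(u)\cap S$ for every $u\notin S$: an arc from $S$ into $u$ is present in $D$ exactly when the corresponding edge is present in $H$, an arc oriented from $u$ to some $s\in S$ does not make $s$ an in-neighbour of $u$, and the arbitrarily oriented edges contribute no element of $S$ to any $I_D(S;\cdot)$ (an arc inside $V\setminus S$ has its tail outside $S$, and an arc inside $S$ points to a vertex of $S$). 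Since $S$ is locating-dominating in $H$, the sets $N_H(u)\cap S$ are non-empty and pairwise distinct over $u\notin S$, hence so are the sets $I_D(S;u)$; thus $S$ is a locating-dominating set of $D$ and $\old(G)\le\ld(D)\le|S|=\ld(H)$. Minimising over all spanning subgraphs $H$ gives this inequality.

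For the reverse inequality $\old(G)\ge\min_H\ld(H)$, I would take an orientation $D$ of $G$ realising $\old(G)$ together with an optimal locating-dominating set $S$ of $D$, so that $|S|=\old(G)$, and define $H$ to be the spanning subgraph of $G$ whose edge set consists precisely of the edges $su$ with $s\in S$, $u\notin S$ and $(s,u)$ an arc of $D$. By construction $N_H(u)\cap S=I_D(S;u)$ for every $u\notin S$, and since $S$ locates and dominates $D$ these sets are non-empty and pairwise distinct; hence $S$ is a locating-dominating set of the undirected graph $H$, yielding $\min_H\ld(H)\le\ld(H)\le|S|=\old(G)$. I would close by remarking that there is no serious obstacle in this argument: the only point requiring care is to verify that the edges or arcs we do not control, that is the arcs inside $S$ and inside $V\setminus S$ in the first direction and the discarded arcs in the second, never alter the in-neighbourhoods $I_D(S;\cdot)$ that govern location-domination. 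Combining the two inequalities then gives $\old(G)=\min\{\ld(H)\mid H\text{ is a spanning subgraph of }G\}$.
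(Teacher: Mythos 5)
Your proposal is correct and follows essentially the same route as the paper's proof: both directions use the same orientation of the edges between $S$ and $V\setminus S$ (respecting membership in $H$) and the same extraction of a spanning subgraph from the arcs leaving $S$, with the key observation $I_D(S;u)=I_H(S;u)$ for all $u\notin S$. The only cosmetic difference is that the paper's subgraph $H'$ also retains edges inside $S$, which, as you note, are irrelevant to location-domination.
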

\begin{proof}
Let us show first that $\old(G)\leq\ld(H)$ holds for each spanning subgraph $H$ of $G$. Let $S$ be a locating-dominating set of a spanning subgraph $H$ of $G$. We next construct an orientation $D$ of $G$ such that an edge $e$ between $S$ and $V\setminus S$ is oriented away from the vertex in $S$ if $e\in E(H)$ and if $e\not\in E(H)$, then we orient edge $e$ towards the vertex in $S$. Other edges can be oriented in any way. Observe that we have $I_D(S;w)=I_H(S;w)$ for each vertex $w\not\in S$ and hence, $S$ is locating-dominating in $D$. Thus, $\old(G)\leq\min\{\ld(H)\mid H$ is a spanning subgraph of $G\}$.

Let us then show that for any orientation $D'$ of $G$, there exists a spanning subgraph $H'$ of $G$ such that $\ld(H')\leq \ld(D')$. Let $S$ be a locating-dominating set in $D'$. Let us construct a spanning subgraph $H'$ from the graph $G$ by having $V(H')=V(G)$ and $e=uv\in E(H')$ if and only if either $u\in S$ and the edge is oriented away from $u$ in $D'$ or $v\in S$ and the edge is oriented away from $v$ in $D'$. Observe that now $I_{D'}(S;w)=I_{H'}(S;w)$ for each vertex $w\not\in S$ and hence, $S$ is locating-dominating in $H'$. Thus, $\min\{\ld(H')\mid H'\text{ is a spanning subgraph of } G\}\leq \old(G)$ and the claim follows.
\end{proof}

In the following theorem, we apply the previous lemma on classes of graphs which are closed under (spanning) subgraphs. In particular, general lower bounds for undirected location-domination numbers in such classes also hold when we orient graphs. 

 \begin{lemma}\label{spanningClosed}
  Let $\mathcal{G}$ be a class of graphs closed under subgraphs. If there exists a function $f : \mathbb{N} \rightarrow \mathbb{N}$ such that for each graph $G\in \mathcal{G}$ with $n$ vertices we have $\ld(G)\geq f(n)$, then $$\old(G)\geq f(n).$$
 \end{lemma}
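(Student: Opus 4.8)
The plan is to combine Lemma~\ref{spanningLemma} with the hypothesis that $\mathcal{G}$ is closed under subgraphs. The key observation is that Lemma~\ref{spanningLemma} expresses $\old(G)$ as a minimum of $\ld(H)$ over all spanning subgraphs $H$ of $G$, so it suffices to bound $\ld(H)$ from below uniformly over all such $H$. First I would fix a graph $G \in \mathcal{G}$ on $n$ vertices and let $H$ be an arbitrary spanning subgraph of $G$ that attains the minimum in Lemma~\ref{spanningLemma}, i.e.\ $\old(G) = \ld(H)$.

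The heart of the argument is then to apply the assumed lower bound $\ld(\cdot) \ge f(\cdot)$ to $H$. For this to be legitimate, two things must hold: $H$ must itself belong to $\mathcal{G}$, and $H$ must have the right number of vertices. The first point is exactly where the closure hypothesis on $\mathcal{G}$ is used: since $G \in \mathcal{G}$ and $H$ is a subgraph of $G$, closure under subgraphs gives $H \in \mathcal{G}$. The second point is handled by the fact that $H$ is a \emph{spanning} subgraph, so $V(H) = V(G)$ and hence $H$ has exactly $n$ vertices. With both conditions verified, the hypothesis applied to $H$ yields $\ld(H) \ge f(n)$.

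Putting these together gives the result: $\old(G) = \ld(H) \ge f(n)$, as required. I would write this chain of (in)equalities explicitly so the reader sees that the spanning property controls the vertex count while the closure property controls membership in $\mathcal{G}$.

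I do not expect any serious obstacle here, since this is an essentially immediate corollary of Lemma~\ref{spanningLemma}. The only point requiring mild care is making sure the minimizing spanning subgraph $H$ is treated as a fixed object to which the hypothesis can be cleanly applied, and emphasizing that the word \emph{spanning} (not just \emph{subgraph}) is what guarantees $|V(H)| = n$ so that we may invoke $f(n)$ rather than $f$ of some smaller order; without the spanning condition the statement would be false in general.
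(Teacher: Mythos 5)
Your proof is correct and follows essentially the same route as the paper: both invoke Lemma~\ref{spanningLemma} to produce a minimizing spanning subgraph $H$ with $\old(G)=\ld(H)$, use closure under subgraphs to place $H\in\mathcal{G}$, and apply the hypothesis $\ld(H)\ge f(n)$ (valid since $H$ is spanning and hence has $n$ vertices). The paper merely phrases the same argument as a proof by contradiction, which is an inessential difference.
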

\begin{proof}
Assume by contradiction that $\old(G)<f(n)$ for some $G\in \mathcal{G}$. By Lemma \ref{spanningLemma}, there exists a spanning subgraph $H$ such that $\ld(H)=\old(G)$. So $H\in \mathcal{G}$ and $\ld(H)<f(n)$, a contradiction.
\end{proof}


As proven in \cite{rall1984location}, planar graphs satisfy $\ld(G)\geq \frac{n+10}{7}$ and outerplanar graphs satisfy $\ld(G)\geq \frac{2n+3}{7}$. Since planar and outerplanar graphs are closed under subgraphs, the following is a consequence of Lemma \ref{spanningClosed}.

\begin{corollary}\label{lowerBoundsForClassesOLD}
 Let $G$ be a planar graph on $n$ vertices. Then, $$\old(G)\geq \frac{n+10}{7}.$$
 Let $G'$ be an outerplanar graph on $n$ vertices. Then, $$\old(G')\geq \frac{2n+3}{7}.$$
\end{corollary}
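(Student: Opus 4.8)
The plan is to apply Lemma~\ref{spanningClosed} directly, so the only real task is to verify that its two hypotheses hold for each of the two graph classes. First I would recall the lower bounds from~\cite{rall1984location}: every planar graph $G$ on $n$ vertices satisfies $\ld(G)\geq \frac{n+10}{7}$, and every outerplanar graph satisfies $\ld(G)\geq \frac{2n+3}{7}$. These play the role of the function $f$ in the lemma, namely $f(n)=\frac{n+10}{7}$ for the planar case and $f(n)=\frac{2n+3}{7}$ for the outerplanar case.

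The key observation — and the one point that genuinely needs to be checked — is that both the class of planar graphs and the class of outerplanar graphs are closed under taking subgraphs. This is a standard structural fact: deleting vertices or edges cannot destroy planarity or outerplanarity, since a planar (respectively outerplanar) embedding of $G$ restricts to a planar (outerplanar) embedding of any subgraph. Equivalently, by Kuratowski/Wagner-type characterizations, neither class can acquire a forbidden minor (or topological minor) by edge/vertex deletion. Once this closure is in hand, both classes satisfy the hypothesis of Lemma~\ref{spanningClosed} with their respective functions $f$.

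Having verified closure, I would simply invoke Lemma~\ref{spanningClosed}. For a planar graph $G$, every spanning subgraph $H$ is again planar on $n$ vertices, so $\ld(H)\geq \frac{n+10}{7}$; by Lemma~\ref{spanningLemma}, $\old(G)=\min\{\ld(H)\mid H\text{ spanning subgraph of }G\}\geq \frac{n+10}{7}$. The identical argument with the outerplanar bound yields $\old(G')\geq \frac{2n+3}{7}$ for outerplanar $G'$. No calculation beyond substituting the two bounds is required.

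I do not expect a substantive obstacle here, since this corollary is essentially a routine transfer of known undirected bounds through the machinery already established in Lemmas~\ref{spanningLemma} and~\ref{spanningClosed}. The only thing to be careful about is the precise definition of ``closed under subgraphs'': the lemma needs closure under \emph{all} subgraphs (equivalently, spanning subgraphs suffice, since every spanning subgraph of a planar graph is planar), and one should note that the cited lower bounds are stated for every graph in the class regardless of connectivity, so there is no hidden connectivity hypothesis to worry about when passing to spanning subgraphs that may be disconnected.
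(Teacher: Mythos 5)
Your proposal is correct and matches the paper's own proof: both cite the bounds of~\cite{rall1984location}, observe that planar and outerplanar graphs are closed under subgraphs, and conclude via Lemma~\ref{spanningClosed} (which itself rests on Lemma~\ref{spanningLemma}). Your extra care about the definition of subgraph-closure and connectivity is a sound sanity check but adds nothing beyond what the paper's one-line argument already uses.
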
 

\begin{lemma}\label{DirLDLowBound}
Let $G$ be a graph of order $n$. Then, $$\old(G)\geq\frac{2n}{\Delta(G)+3}.$$
\end{lemma}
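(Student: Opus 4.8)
The plan is to reduce the directed statement to an undirected counting bound. By Lemma~\ref{spanningLemma}, $\old(G)=\ld(H)$ for some spanning subgraph $H$ of $G$, and every spanning subgraph satisfies $\Delta(H)\le\Delta(G)$. Hence it suffices to prove that every graph $H$ on $n$ vertices satisfies $\ld(H)\ge \frac{2n}{\Delta(H)+3}$: combining this with $\Delta(H)\le\Delta(G)$ yields $\old(G)=\ld(H)\ge\frac{2n}{\Delta(H)+3}\ge\frac{2n}{\Delta(G)+3}$. (Equivalently, one may apply Lemma~\ref{spanningClosed} to the subgraph-closed class of graphs of maximum degree at most $\Delta(G)$.)

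The heart of the argument is a double counting of the edges joining a locating-dominating set to its complement. Fix a minimum locating-dominating set $S$ of $H$, write $k=|S|=\ld(H)$ and $\Delta=\Delta(H)$, and count the edges between $S$ and $V\setminus S$ in two ways. On the one hand, each vertex of $S$ has at most $\Delta$ neighbours in total, and since every such edge has exactly one endpoint in $S$, there are at most $k\Delta$ of them. On the other hand, this number equals $\sum_{u\notin S}|I(u)|$.

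To lower bound the latter sum, I would split $V\setminus S$ according to the size of $I(u)$. Because $S$ is locating, the map $u\mapsto I(u)$ is injective on $V\setminus S$; in particular at most $k$ vertices $u\notin S$ can satisfy $|I(u)|=1$, since there are only $k$ singleton subsets of $S$. Writing $n_1$ for the number of such vertices (so $n_1\le k$), the remaining $(n-k)-n_1$ vertices contribute at least $2$ each, whence $\sum_{u\notin S}|I(u)|\ge n_1+2\big((n-k)-n_1\big)=2(n-k)-n_1\ge 2n-3k$.

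Putting the two estimates together gives $k\Delta\ge 2n-3k$, i.e. $k(\Delta+3)\ge 2n$, which is exactly $\ld(H)\ge\frac{2n}{\Delta+3}$, and the reduction above finishes the proof. The only delicate point is the bound $n_1\le k$, which relies solely on the injectivity of the identification map and is what makes the constant ``$+3$'' (rather than something worse) appear; everything else is routine double counting. Alternatively, one can run the same count directly on an optimal orientation $D$, replacing ``neighbours'' by ``out-neighbours'' and using that the out-degree of any vertex is at most $\Delta(G)$, which avoids invoking Lemma~\ref{spanningLemma} explicitly.
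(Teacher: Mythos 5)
Your proposal is correct and takes essentially the same approach as the paper: the same reduction to spanning subgraphs via Lemma~\ref{spanningClosed} together with $\Delta(H)\le\Delta(G)$, followed by a counting proof of the undirected bound $\ld(H)\ge 2n/(\Delta(H)+3)$ whose crux --- at most $|S|$ vertices outside $S$ can have singleton $I$-sets, equivalently each vertex of $S$ is the sole dominator of at most one vertex --- is exactly the fact driving the paper's argument. The only difference is presentational: the paper organizes the count as Slater's share (discharging) argument, shifting $1/|I(v)|$ units of share to each dominator of $v$ and bounding the maximum share $(\Delta+3)/2$ received by a vertex of $S$, whereas you double-count the edges between $S$ and $V\setminus S$; both amount to the inequality $|S|(\Delta+3)\ge 2n$.
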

\begin{proof}
Let $G$ be a graph of order $n$. In \cite[Theorem $2$]{slater2002fault} Slater has given general lower bound $\ld(G)\geq 2n/(d+3)$ for a locating-dominating set in a $d$-regular graph $G$ on $n$ vertices. Moreover, it is easy to generalize the proof for non-regular graphs, giving $\ld(G)\geq2n/(\Delta(G)+3)$. For completeness, we include the proof here. Let $G$ be a graph on $n$ vertices with a locating-dominating set $S$. We give one \textit{share unit} for each vertex. Next, we shift $1/|I(v)|$ share from each vertex $v\in V(G)\setminus S$ to every vertex in $I(v)$. After this shift, total share over all vertices remains as $n$. Let $s$ denote the largest share in any vertex $u\in S$. Notice that $s|S|\geq n$ and hence, $|S|\geq n/s$. Moreover, we have that $s\leq 2+(\Delta(G)-1)/2$. Indeed, vertex $u$ has  share of $1$ at the beginning. After which, we shift at most $1+(\Delta(G)-1)/2$ share to $u$ since there is at most one adjacent vertex $v$ with $|I(v)|=1$. Thus, $|S|\geq  n/s\geq 2n/(\Delta(G)+3)$.

Moreover, we also have $\ld(H)\geq2n/(\Delta(H)+3)\geq2n/(\Delta(G)+3)$ for each spanning subgraph $H$ of $G$ since $\Delta(H)\leq \Delta(G)$. Thus, the claim follows from Lemma~\ref{spanningClosed} with graph class $\mathcal{G}_G=\{H\mid H \text{ is a subgraph of } G\}.$
\end{proof}

\subsection{Conjecture~\ref{conj:twinfree} holds for graph orientations}\label{sec:n/2best}

The main goal of this section is to prove Theorem \ref{thm:n/2} we restate here:

\thmtwinfreeRS*

We first need some auxiliary definitions and results.
 
Let $G=(V,E)$ be an undirected graph of order $n \geq 3$. A vertex adjacent to a leaf is called a \textit{support vertex} and a non-leaf, non-support vertex $u$ which has only support vertices as neighbours is called a \textit{support link}. The number of support vertices, leaves and support links in $G$ are denoted by respectively $s(G)$, $\ell(G)$ and $sl(G)$. Moreover, let us denote by $L(G)$, $S(G)$ and $SL(G)$ the sets of leaves, support vertices and support links, respectively, in $G$. 
By convention, for the path $P_2$ we assume that one of its two vertices is a support vertex and the other is a leaf.

We first introduce a useful lemma. The result has been previously discussed in \cite{no2007aocating1domination} and claim \ref{LDTrees_2} has been proven in \cite[Lemma $2.1$]{no2007aocating1domination}.

\begin{lemma}\label{LDTrees_support}
Let $T$ be a tree, $s \in S(T)$ with $k$ leaves $v_1,\dots,v_k$ attached to $s$. Then:
\begin{enumerate}
    \item\label{LDTrees_1} Every locating-dominating set $C$ in $T$ contains at least $k$ vertices in $\{s,v_1,\dots,v_k\}$.
    \item\label{LDTrees_2} There exists a minimum locating-dominating set $C$ in $T$ which contains all the vertices $s\in S(T)$ and for each $s\in S(T)$ there is exactly  one leaf attached to $s$ which is not in $C$.
\end{enumerate}
\end{lemma}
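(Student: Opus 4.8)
The plan is to prove the two claims about locating-dominating sets in a tree $T$, focusing on the local structure around support vertices.

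For claim \ref{LDTrees_1}, I would fix a support vertex $s$ with leaves $v_1,\dots,v_k$ and consider an arbitrary locating-dominating set $C$. The key observation is that the leaves $v_1,\dots,v_k$ are pairwise twins: each $v_i$ has $N(v_i)=\{s\}$, so $N(v_i)=N(v_j)$ for all $i,j$. Since a locating-dominating set must separate every pair of vertices not in $C$, and any two leaves $v_i,v_j\notin C$ would have $I(v_i)=I(v_j)=\{s\}$ (their only possible neighbour in $C$ is $s$), at most one leaf can lie outside $C$. Hence at least $k-1$ of the leaves are in $C$. If all $k$ leaves are in $C$ we are done; otherwise exactly one leaf, say $v_k$, is outside $C$, and then $v_k$ must be dominated, forcing $s\in C$. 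Either way $\{s,v_1,\dots,v_k\}$ contributes at least $k$ vertices to $C$, proving the first claim.

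For claim \ref{LDTrees_2}, I would start from a minimum locating-dominating set $C$ and modify it support vertex by support vertex without increasing its size, so that the resulting set contains every $s\in S(T)$ and omits exactly one leaf per support vertex. Fix a support vertex $s$ with leaves $v_1,\dots,v_k$. By claim \ref{LDTrees_1}, $C$ contains at least $k$ of the $k+1$ vertices in $\{s,v_1,\dots,v_k\}$. The plan is a local exchange argument: if $s\notin C$, then all $k$ leaves lie in $C$; I would replace one leaf $v_1$ by $s$, setting $C'=(C\setminus\{v_1\})\cup\{s\}$. I must check that $C'$ is still locating-dominating. This is the step requiring care, since swapping a leaf for its support vertex changes the in-/neighbourhood traces of $s$'s other neighbours. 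The vertex $v_1$ now sits outside $C'$ with $I(v_1)=\{s\}$, and no other vertex can share this trace because any other vertex $w$ with $I(w)=\{s\}$ would itself be a leaf of $s$ already in $C$, or would need $s$ as its sole neighbour in $C'$; here I would use that the only vertices adjacent to $s$ are its leaves and at most one non-leaf neighbour higher in the tree, and that the higher neighbour is distinguished by having additional neighbours in $C'$. Once $s\in C$ for every support vertex, claim \ref{LDTrees_1} forces at least $k-1$ leaves into $C$; if all $k$ are in $C$, I would simply delete one leaf, which remains dominated by $s$ and is separated from everything else since $\{s\}$ is a trace no in-$C$ leaf can duplicate.

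The main obstacle I anticipate is verifying that the local swap in claim \ref{LDTrees_2} preserves the locating property globally, not just locally around $s$: adding $s$ to $C'$ augments the trace $I(w)$ of every neighbour $w$ of $s$ that lies outside $C'$, and I must ensure no two such vertices collide and that no previously separated pair becomes confused. The clean way to handle this is to observe that in a tree the only neighbours of $s$ are leaves (all but one now in $C'$) and at most one internal vertex, so the set of affected external vertices is tiny and the separation is easy to check directly. Since these exchanges are confined to disjoint neighbourhoods of distinct support vertices (leaves of one support vertex are never leaves of another), I can perform them independently over all $s\in S(T)$ and take the union, obtaining the desired minimum locating-dominating set in one pass.
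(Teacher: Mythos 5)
Your proof of claim (1) is correct and is essentially the paper's argument. The first exchange in claim (2) (replacing a leaf by its support vertex when $s\notin C$) is also the paper's move, but your justification of it leans on a false structural statement: in a tree a support vertex can have arbitrarily many non-leaf neighbours (e.g.\ a vertex with one pendant leaf and several pendant paths of length two), not ``its leaves and at most one non-leaf neighbour higher in the tree''. That part is repairable without the false claim: any vertex $w\neq v_1$ whose trace in $C'=(C\setminus\{v_1\})\cup\{s\}$ equals $\{s\}$ would have had $I(C;w)=\emptyset$ (it is not adjacent to the leaf $v_1$, and $s\notin C$), contradicting domination, and adding $s$ cannot merge two previously distinct traces since $s\notin C$.

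The genuine gap is your final step, in the case where $s$ and \emph{all} $k$ of its leaves lie in the minimum set $C$, which you resolve by ``simply deleting one leaf''. First, this step is self-defeating: if the deletion preserved the locating-domination property, it would produce a locating-dominating set of size $|C|-1$, contradicting minimality of $C$; so it can never work. Second, it fails concretely: your check only rules out collisions with other leaves, but the deleted leaf $v$ acquires trace $\{s\}$, which can collide with a \emph{non-leaf} vertex $u\notin C$ adjacent to $s$ whose trace is already $\{s\}$. For an explicit example, take $s$ with leaves $v_1,v_2$, a path $s-u-w$, and two support vertices $t,t'$ adjacent to $w$, with leaves $z_1,z_2$ and $z_1',z_2'$ respectively. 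One checks that the minimum size is $7$ and that $C=\{s,v_1,v_2,t,z_1,t',z_1'\}$ is a minimum locating-dominating set; here $I(u)=\{s\}$, so deleting $v_1$ creates the collision $I(v_1)=I(u)=\{s\}$. The paper turns precisely this obstruction into the fix: since $C$ is minimum, a vertex $u\notin L(T)\cup C$ with $I(u)=\{s\}$ must exist (otherwise the deletion would be legal, contradicting minimality), and then the \emph{swap} $C'=(C\setminus\{v\})\cup\{u\}$ is locating-dominating, has the same size, and leaves exactly one leaf of $s$ outside. This swap, rather than a bare deletion, is the idea missing from your proposal; the rest (performing the modifications support vertex by support vertex, which do not interfere with one another) is as in the paper.
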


\begin{proof}
Let $C$ be a locating-dominating set. Let $s \in S(T)$. If $s \notin C$, then all the leaves attached to $s$ are in $C$. Otherwise, $C$ is not dominating. Let $v$ be a leaf attached to $s$. We claim that $C'=\{s\}\cup C\setminus \{v\}$ is a locating-dominating set. Indeed, we have $I(C';v)=\{s\}$ and if $I(C';u)=\{s\}$ for any $v\neq u\in V\setminus C'$, then $I(C;u)=\emptyset$. Thus, $C'$ is a locating-dominating set. So, for every $C$, there exists a locating-dominating set of the same size containing $s$. We assume that $S(T)\subseteq C$ holds in the rest of the proof.

Assume by contradiction that $|\{s,v_1,\dots,v_k\}\cap C|\leq k-1$. Since $s \in C$, there are $v_i,v_j\not\in C$ with $i \ne j$. But then $I(v_i)=I(v_j)= \{s\}$, a contradiction. So the first point holds.

Assume next that $N(s)\cap L(T)\subseteq C$. Let $v \in N(s) \cap L(T)$. Since $C$ has minimum size, there exists a vertex $u\not\in L(T)\cup C$ such that $I(u)=\{s\}$ (otherwise $v$ can be safely removed from $C$ contradicting the minimality of $C$). However, if we now consider the set $C'=\{u\}\cup C\setminus \{v\}$, then we notice immediately that $C'$ is locating-dominating and the claim follows.
\end{proof}

Locating-dominating sets in trees have been widely studied.
Blidia et al. proved in~\cite{no2007aocating1domination} that \begin{equation}\label{LDTreesOld}
\ld(T)\leq\frac{n+\ell(T)-s(T)}{2}.    
\end{equation}  Let us prove a slight improvement of this result that is needed in the proof of the main result of this section. As this is the best known upper bound for locating-dominating sets in trees, we have included a complete characterization of trees attaining it in Theorem~\ref{TreeChar}.

\begin{theorem}\label{LinkSupports}
Let $T$ be a tree of order $n\geq2$. Then, $$\ld(T)\leq \frac{n+\ell(T)-s(T)-sl(T)}{2}.$$
\end{theorem}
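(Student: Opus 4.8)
The plan is to prove the bound
$$\ld(T)\leq \frac{n+\ell(T)-s(T)-sl(T)}{2}$$
by improving upon the Blidia et al.\ bound \eqref{LDTreesOld}, which already gives $\ld(T)\leq (n+\ell(T)-s(T))/2$. Since the only difference is the extra term $-sl(T)/2$, the essential task is to \emph{save half a vertex per support link} in the construction of a good locating-dominating set. I would start from the structured minimum locating-dominating set $C$ provided by Lemma~\ref{LDTrees_support}\ref{LDTrees_2}: it contains all support vertices $S(T)$, and for each support vertex exactly one attached leaf is left out of $C$. I would then try to show that one can also leave each support link out of $C$ while still maintaining the locating-dominating property, which is precisely where the $-sl(T)$ saving comes from.

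The key structural observation is that a support link $u$ has all its neighbours in $S(T)\subseteq C$, so $u$ is automatically dominated, and moreover each neighbour $s$ of $u$ is a support vertex carrying at least one leaf. First I would argue that $u$ can be removed from $C$: since every neighbour of $u$ lies in $C$, the set $I(u)$ is nonempty, so domination is preserved. The only danger is location, i.e.\ that some other vertex $w\notin C$ shares the same neighbourhood $I(w)=I(u)$ on $C$. The delicate point is that two distinct support links could have the same set of support-vertex neighbours, or a support link could collide with a left-out leaf. To handle this cleanly, rather than naively deleting all support links, I would build $C$ explicitly: include all support vertices, include all leaves except one per support vertex, exclude all support links, and then add a correction set of vertices to separate any remaining collisions. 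The accounting would be: $|C| \le s(T) + (\ell(T)-s(T)) + (\text{non-leaf non-support vertices}) - sl(T) + (\text{corrections})$, and one checks this matches the target after observing $n = \ell(T)+s(T)+sl(T)+(\text{other internal vertices})$.

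The cleaner route, which I would actually pursue, is an induction on the number of support links together with a local modification argument. Given a tree $T$ with a support link $u$, I would consider the support vertices $s_1,\dots,s_t$ adjacent to $u$ and their leaves. The idea is that the leaves attached to the $s_i$ already force many vertices into $C$ (by Lemma~\ref{LDTrees_support}\ref{LDTrees_1}), and these forced vertices act as ``identifiers'' that separate $u$ from every other non-code vertex: a vertex $w$ with $I(w)=I(u)$ would have to be adjacent to exactly the same subset of $\{s_1,\dots,s_t\}\subseteq C$, and I would show that if such a $w$ existed one could instead swap an element into $C$ at no net cost, exactly as in the proof of Lemma~\ref{LDTrees_support}. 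Deleting $u$ from a minimum-type code then saves one vertex per support link without creating a conflict, giving the claimed $-sl(T)$ term.

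The main obstacle, I expect, is the \emph{location} constraint for support links whose neighbourhoods overlap heavily. If two support links $u,u'$ are both adjacent to the same set of support vertices, or if a support link shares its support-neighbour set with an excluded leaf, then deleting both (or the naive exclusion) destroys distinctness of in-neighbourhoods and domination simultaneously. Managing these collisions carefully — showing that at most one vertex per such ``bad'' configuration needs to be kept, and that keeping it does not eat into the claimed saving — is the crux. I would resolve it by a careful charging argument, arguing that whenever a collision would occur, one of the involved support vertices carries a spare leaf in $C$ that can be re-routed, so that each support link still contributes a genuine saving of $1/2$ relative to \eqref{LDTreesOld}. The equality case, deferred to Theorem~\ref{TreeChar}, suggests the bound is tight, which is a useful sanity check that no further savings beyond $-sl(T)/2$ are available and that the charging must be exactly balanced.
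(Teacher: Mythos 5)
There is a genuine gap, and it sits at the heart of your argument: the mechanism you propose for the $-sl(T)$ saving cannot work. You propose to take a minimum locating-dominating set $C$ of $T$ (structured as in Lemma~\ref{LDTrees_support}) and then ``leave each support link out of $C$'', or ``delete $u$ from a minimum-type code'', claiming this saves one vertex per support link. But $|C|=\ld(T)$ is exactly the quantity being bounded: a minimum code cannot be shrunk without contradicting minimality, and in any case support links need not belong to a minimum code in the first place --- all their neighbours are support vertices, which already lie in $C$ --- so excluding them saves nothing. The bound \eqref{LDTreesOld} applied to $T$ itself only gives $(n+\ell(T)-s(T))/2$, and no manipulation of a code of $T$ can extract an extra $-sl(T)/2$ from it. Your fallback explicit construction fails on the accounting: all support vertices, all but one leaf per support, and all remaining internal vertices amount to $n-s(T)-sl(T)$ vertices plus corrections, which in general exceeds the target $(n+\ell(T)-s(T)-sl(T))/2$; the whole content of \eqref{LDTreesOld} is that roughly half of the internal structure suffices, and you cannot invoke it for $T$ and then subtract.

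What the paper does instead --- and what your proposal is missing --- is to apply \eqref{LDTreesOld} to a \emph{smaller} graph: the forest $F=T-SL(T)$. This forest has $n-sl(T)$ vertices but the same leaves and support vertices as $T$ (with the $P_2$ convention), so \eqref{LDTreesOld} gives $\ld(F)\le (n-sl(T)+\ell(T)-s(T))/2$; one then takes an optimal code $C$ of $F$ containing $S(F)$ (Lemma~\ref{LDTrees_support}) and checks that it is still locating-dominating in $T$. That verification also shows that the ``main obstacle'' you spend most of your effort on is vacuous: a support link $u$ satisfies $I(u)\subseteq S(T)\subseteq C$ and $|I(u)|\ge 2$, so any other vertex $v$ with $I_T(v)=I_T(u)$ would share two neighbours with $u$ and create a cycle --- impossible in a tree. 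Two support links with the same support-neighbourhood, or a support link colliding with an excluded leaf (whose $I$-set has size $1$), simply cannot exist; no charging or swapping argument is needed. Your instinct that the supports ``identify'' the support links is right, but without routing the saving through a bound on $T-SL(T)$ (or, equivalently, an induction that deletes a support link from the \emph{tree} rather than from a code), the proof does not go through.
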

\begin{proof}
Let $T$ be a tree and let $F=T-SL(T)$. The set $F$ induces a forest without isolated vertices. Moreover $S(T)=S(F)$ and $L(T)=L(F)$ (by choosing the right vertex in $L$ and $S$ if the component is a $P_2$). Let $C$ be an optimal locating-dominating set in $F$ such that $S(F)\subseteq C$. Observe that now $C$ is also a locating-dominating set in $T$. Indeed, if $u\in SL(T)$, then $I(u)\subseteq S(T)$ and $|I(u)|\geq2$. Moreover, if $I(v)=I(u)$, then we have a cycle. Finally, if $u,v\not\in SL(T)$, then $I_T(u)=I_T(v)$ implies that $I_F(u)=I_F(v)$. Thus, $\ld(T)\leq |C|=\ld(F)\leq \frac{n-sl(T)+\ell(T)-s(T)}{2}$. The last inequality is due  to bound (\ref{LDTreesOld}).
\end{proof}

As a slight side-step from proving Theorem \ref{thm:n/2}, we first give a characterization for trees reaching the upper bound of Theorem \ref{LinkSupports}. For this, we need some definitions. Let $\mathcal{T}$ be a family of trees such that $T\in \mathcal{T}$ if and only if $\ld(T)=\frac{n+\ell(T)-s(T)}{2}$ where $n=|V(T)|$ or if $T=P_2$. This family has been characterized in \cite{no2007aocating1domination}. We say that trees $T_1,T_2,\dots, T_k$, where $k\geq2$ are \textit{support linked} into tree $T$ and we note $T=\mathcal{SL}(T_1, T_2,\dots, T_k)$ 
if there are vertices $v_i\in S(T_i)$ and $w\not\in \bigcup_{i=1}^{k} V(T_i)$ such that $V(T)=\bigcup_{i=1}^{k} V(T_i)\cup\{w\}$ and $E(T)=\bigcup_{i=1}^{k} E(T_i)\cup \{v_iw\mid 1\leq i\leq k\}$. Let us denote by $\mathcal{T}_{SL}$ the closure of $\mathcal{T}$ under $\SL$.

\begin{theorem}\label{TreeChar}
Let $T$ be a tree. We have $\ld(T)=\frac{n+\ell(T)-s(T)-sl(T)}{2}$ if and only if $T\in \mathcal{T}_{SL}$.
\end{theorem}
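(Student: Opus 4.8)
The plan is to prove the characterization in Theorem~\ref{TreeChar} by induction on the number of vertices, exploiting the support-link structure introduced just before the statement. The forward and backward directions correspond to the two operations that define $\mathcal{T}_{SL}$: the base family $\mathcal{T}$ (trees attaining the \emph{old} bound \eqref{LDTreesOld}, already characterized in~\cite{no2007aocating1domination}), and the closure operation $\SL$. First I would recast the quantity to be controlled. Comparing the two bounds in~\eqref{LDTreesOld} and Theorem~\ref{LinkSupports}, the only difference is the term $sl(T)$, so equality $\ld(T)=\frac{n+\ell(T)-s(T)-sl(T)}{2}$ forces the tree to attain the stronger bound. The key accounting fact I would establish is how the parameters $n,\ell,s,sl$ behave under the operation $T=\SL(T_1,\dots,T_k)$: the new vertex $w$ adds $1$ to $n$, each $v_i$ stops being a leaf-free internal vertex and $w$ becomes a support link of $T$ (since its neighbours $v_1,\dots,v_k$ are all support vertices), while the leaves and supports of the $T_i$ are preserved in $T$. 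Tracking this, I expect $\frac{n+\ell-s-sl}{2}$ to add up additively across the pieces, which is exactly what makes the induction close.

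For the backward direction ($T\in\mathcal{T}_{SL}\Rightarrow$ equality), I would induct on the number of $\SL$ operations used to build $T$. The base case is $T\in\mathcal{T}$, where by definition $\ld(T)=\frac{n+\ell-s}{2}$ and one must check $sl(T)=0$ for such trees (intuitively, a tree attaining the old bound has no ``wasted'' internal structure like a support link would create). For the inductive step, writing $T=\SL(T_1,\dots,T_k)$, I would use Lemma~\ref{LDTrees_support}\eqref{LDTrees_2} to choose a minimum locating-dominating set $C$ containing all support vertices of $T$ and omitting exactly one leaf per support. Restricting $C$ to each $T_i$ should give a locating-dominating set of $T_i$, and conversely the union of optimal sets on the $T_i$ together with $w$ (or its neighbours) should be locating-dominating in $T$; combining these with the additive parameter count yields $\ld(T)=\sum_i\ld(T_i)=\frac{n+\ell-s-sl}{2}$.

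For the forward direction (equality $\Rightarrow T\in\mathcal{T}_{SL}$), I would argue by contradiction / minimal counterexample. If $T$ attains the bound and $sl(T)=0$, then the two bounds coincide and $T\in\mathcal{T}\subseteq\mathcal{T}_{SL}$, so I may assume $T$ has a support link $w$ with neighbours $v_1,\dots,v_k\in S(T)$. I would then try to \emph{decompose} $T$ along $w$: deleting $w$ splits $T$ into components $T_1,\dots,T_k$ (each containing one $v_i$), and I would show each $T_i$ again attains the Theorem~\ref{LinkSupports} bound, so that by induction each $T_i\in\mathcal{T}_{SL}$ and hence $T=\SL(T_1,\dots,T_k)\in\mathcal{T}_{SL}$. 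The crucial sub-step is verifying that equality for $T$ propagates to equality for every $T_i$: this will follow from the additivity of the parameter count combined with the general inequality $\ld(T)\le\sum_i\ld(T_i)$ (obtained by pasting together locating-dominating sets of the pieces, using that $w$ is dominated and located by the $v_i\in S(T)\subseteq C$).

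The main obstacle I anticipate is the bookkeeping at the ``seam'' vertex $w$ and its neighbours $v_i$ when decomposing and recombining locating-dominating sets. Specifically, I must be careful that (i) restricting an optimal $C$ to $T_i$ does not accidentally fail to locate a vertex in $T_i$ whose separation in $T$ relied on $w$, and (ii) the status of each $v_i$ as a support vertex is the same in $T_i$ as in $T$, so the parameters $s,\ell,sl$ really are additive with the single correction coming from $w$ itself. Handling the degenerate cases — components $T_i$ equal to $P_2$, or $v_i$ having leaves that interact with the choice in Lemma~\ref{LDTrees_support}\eqref{LDTrees_2} — will require the $P_2$ convention stated earlier and a careful per-component application of that lemma. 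Once the additive identity for $\frac{n+\ell-s-sl}{2}$ under $\SL$ is nailed down and matched to the additivity of $\ld$ on the decomposition, both implications should follow cleanly from the induction.
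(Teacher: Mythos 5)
Your plan is correct and follows essentially the same route as the paper's proof: a minimal-counterexample/induction on the decomposition $T=\SL(T_1,\dots,T_k)$ at a support link, the additivity of $n$, $\ell$, $s$, $sl$ across the pieces, Lemma~\ref{LDTrees_support} to obtain support-containing optimal sets, and restriction/pasting of locating-dominating sets at the seam vertex $w$ (with Theorem~\ref{LinkSupports} supplying the per-piece upper bounds). The only differences are presentational — e.g.\ in the forward direction you force per-piece equality via pasting plus Theorem~\ref{LinkSupports} where the paper restricts a minimum set and invokes its minimality — and note that in your pasting step the union of the optimal sets $C_i$ alone already suffices, since each $v_i\in S(T_i)\subseteq C_i$ dominates and locates $w$; adding $w$ itself would overshoot the count.
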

\begin{proof}
Notice first that if $sl(T)=0$, then $\ld(T)=\frac{n+\ell(T)-s(T)-sl(T)}{2}$ if and only if $T\in \mathcal{T}\subseteq\mathcal{T}_{SL}$.

Let us assume first that there exists $T\in\mathcal{T}_{SL}$ such that $\ld(T)<\frac{n+\ell(T)-s(T)-sl(T)}{2}$. Let $T$ be a tree satisfying these properties with the least number of vertices. By the previous remark, we can assume that $sl(T)>0$ and thus, that $T$ can be written as  $T=\SL(T_1,\dots, T_k)$, where $k\geq 2$, with $T_i\in \mathcal{T}_{SL}$ for both $i$. Let $w$ be the vertex in $V(T)\setminus \bigcup_{i=1}^{k} V(T_i)$ and let $v_i\in N(w)\cap V(T_i)$ for each $i\in\{1,\dots, k\}$. Notice that for any $i$, we have $v_i\in S(T)$ and $v_i\in S(T_i)$. Furthermore, by the minimality of $T$, $\ld(T_i)=\frac{|V(T_i)|+\ell(T_i)-s(T_i)-sl(T_i)}{2}$ for each $i\in\{1,\dots,k\}$. Moreover, let $C$,  be a locating-dominating set of minimum size in $T$ and $C_i$ be a locating-dominating set of minimum size in $T_i$. Notice that $$\sum_{i=1}^k\frac{|V(T_i)|+\ell(T_i)-s(T_i)-sl(T_i)}{2}=\frac{|V(T)|+\ell(T)-s(T)-sl(T)}{2}.$$ Indeed, we have $|V(T)|=1+\sum_{i=1}^k |V(T_i)|$, $\ell(T)=\sum_{i=1}^k\ell(T_i)$, $s(T)=\sum_{i=1}^k s(T_i)$ and $sl(T)=1+\sum_{i=1}^k sl(T_i)$. By Lemma \ref{LDTrees_support}, we may assume that $S(T)\subseteq C$ and $S(T_i)\subseteq C_i$ for each $i\in \{1,\dots,k\}$. 

Since $|C|< \frac{n+\ell(T)-s(T)-sl(T)}{2}$, we have $|C\cap V(T_i)|<\frac{|V(T_i)|+\ell(T_i)-s(T_i)-sl(T_i)}{2}$ for some $i\in\{1,\dots,k\}$. Since $v_i\in C\cap V(T_i)$ and since $C$ is a locating-dominating set in $T$, $C\cap V(T_i)$ is a locating-dominating set in $T_i$, a contradiction. Thus, any tree in $\mathcal{T}_{SL}$ satisfies the claim.

Let us then show that no tree outside of $\mathcal{T}_{SL}$ can satisfy the claim. Let us consider a tree $T$ of minimum size satisfying $\ld(T)=\frac{n+\ell(T)-s(T)-sl(T)}{2}$ and $T\not\in \mathcal{T}_{SL}$. Observe that $sl(T)>0$, otherwise we would have $T\in \mathcal{T}\subseteq \mathcal{T}_{SL}$. Thus, we may assume that $T=\SL(T_1,\dots, T_k)$, where $k\geq2$, for some trees $T_i$, where $T_1\not\in \mathcal{T}_{SL}$ and $w\in V(T)\setminus\bigcup_{i=1}^{k} V(T_i)$. Let $C$ be a minimum size locating-dominating set in $T$ such that $S(T)\subseteq C$ (we may assume this by Lemma \ref{LDTrees_support}). Since $S(T)\subseteq C$ and since $C$ is of minimum size, we have $w\not\in C$. Since $|V(T)|=1+\sum_{i=1}^k |V(T_i)|$, $\ell(T)=\sum_{i=1}^k\ell(T_i)$, $s(T)=\sum_{i=1}^k s(T_i)$ and $sl(T)=1+\sum_{i=1}^k sl(T_i)$ and $\sum_{i=1}^k\frac{|V(T_i)|+\ell(T_i)-s(T_i)-sl(T_i)}{2}=\frac{|V(T)|+\ell(T)-s(T)-sl(T)}{2}$, we have $|C\cap V(T_1)|=\frac{n+\ell(T_1)-s(T_1)-sl(T_1)}{2}$. Indeed, since $|C\cap V(T_i)|\leq \frac{n+\ell(T_i)-s(T_i)-sl(T_i)}{2}$, we would otherwise have $|C|<\frac{|V(T)|+\ell(T)-s(T)-sl(T)}{2} $. However, this is a contradiction on the minimality of~$T$. Thus, $T\in \mathcal{T}_{SL}$.\end{proof}

We are now ready to prove Theorem \ref{thm:n/2}.

\begin{proof}[Proof of Theorem \ref{thm:n/2}]
Let $T$ be a spanning tree of $G$ such that $\ell(T)-s(T)$ is minimal among all the spanning trees of $G$. If $T$ has $\ell(T)=s(T)$, then we are done by  Lemma~\ref{spanningLemma} and Lemma~\ref{LinkSupports}.

First, we claim that any leaf of $T$ adjacent in $T$ to a support vertex $s$ such that $|N(s)\cap L(T)|\geq2$, is adjacent, in $G$, only to vertices which are support vertices in $T$.
Observe that if $u$ and $v$ are two leaves of $T$ adjacent to the same support vertex $s$, then either $u$ or $v$ has another neighbour in $G$ since $G$ is twin-free. Moreover, if $s'\in N_G(u)$, then $s'$ is a support vertex in $T$. Indeed, if $s'$ is a leaf in $T$, then the spanning tree $T'=T-us+us'$ satisfies $\ell(T')-s(T')<\ell(T)-s(T)$, a contradiction with the minimality of $T$. Moreover, if $s'$ is a non-leaf, non-support vertex, then we have  $s(T')=s(T)+1$ and $\ell(T')=\ell(T)$, a contradiction. 

We next construct an auxiliary graph $G'$ as follows. First we add to the tree $T$ every edge $e=uv\in E(G)$ such that $u\in L(T)$, $v\in S(T)$ and there is a support vertex $s\in S(T)$  in $N_T(u)$ such that $|N_T(s)\cap L(T)|\geq 2$. Then, we delete some of the newly added extra edges so that there is exactly one leaf adjacent to every vertex in $S(T)$. The resulting graph is denoted by $G'$. Observe that, because $G$ is twin-free, none of the vertices in $L(T)$ are pairwise twins in $G'$.

Let $C'$ be an optimal locating-dominating set in $T$ such that every support vertex is included in it and for each $s\in S(T)$ there exists a leaf  $u\in N(s)\cap L(T)$ such that $u\not\in C'$. By Lemma~\ref{LDTrees_support} such a set exists. Let us now denote $C''=C'\setminus L(T)$. Now, Lemma~\ref{LinkSupports} and Lemma~\ref{LDTrees_support} together imply that $|C''|\leq n/2$. Indeed, $$|C''|=|C'|-(\ell(T)-s(T))\leq\frac{n-\ell(T)-sl(T)+s(T)}{2}.$$ Finally, we create the locating-dominating set $C$ by adding to set $C''$ all vertices in $SL(T)$ that have a twin in $G'$. Let us denote their set by $W$. Observe that if $v\in SL(T)$ has a twin $u$ in $G'$, then $v$ and  $u$ belong to a cycle in $G'$. Moreover, since $N_T(v)\subseteq S(T)$, we have $u\in L(T)$. Furthermore, vertices $u$ and $v$ may only have one twin in $G'$ and for each $s\in S(T)\cap N(u)$ we have exactly one adjacent leaf in $G'$ (which is not $u$). Thus, $\ell(T)\geq s(T)+|W|$. Hence, $|C|=|C''|+|W|\leq \frac{n-|W|-sl(T)}{2}+|W|\leq \frac{n}{2}$.

Next, we show that $C$ is a locating-dominating set in $G'$. First of all, because none of the vertices in $L(T)$ are pairwise twins in $G'$ and because $S(T)\subseteq C$, all the vertices in $L(T)$ are dominated and pairwise separated by $C$. Moreover, because we removed only leaves from $C'$, which is a locating-dominating set in $T$, and because each support vertex is in $C$, all the non-leaf vertices are dominated and pairwise separated. Finally, there is the case with $I_{G'}(C;u)=I_{G'}(C;v)$ where $u\in L(T)$ and $v\in V(T)\setminus (L(T)\cup S(T)\cup SL(T)\cup C)$. We have $|I(v)|\geq 2$, otherwise we would have $I_T(C';v)=I_T(C';u')$ for some leaf $u'\not\in C'$. Moreover, since $I(u)\subseteq S(T)$, we also have $I(v)\subseteq S(T)$. Let us denote $I(u)=\{u_1,\dots,u_t\}$, $t\geq2$, and assume without loss of generality that $uu_1\in E(T)$. Observe that because $v\not\in SL(T)\cup S(T)\cup L(T)$, there exists $w\in N_{T}(v)\setminus N_{G'}(u)$ and $w$ is not a leaf in $T$.
%
Let us next consider the tree $T''=T-u_1v+uu_2$. We notice that no new leaves are created since $\{w,u_2\}\subseteq N_{T''}(v)$ and $u_1$ has at least three neighbours in $T$, namely $v$, $u$ and at least one other leaf. Moreover, the number of support vertices does not decrease. Indeed, $u_2\in S(T'')$ and $u_1\in S(T'')$. Finally, $u\in L(T)$ but $u\not\in L(T'')$. Thus, we have $\ell(T'')-s(T'')<\ell(T)-s(T)$, a contradiction and hence, $C$ is a locating-dominating set in $G'$, a spanning subgraph  of $G$ and the claim follows by Lemma \ref{spanningLemma}.
\end{proof}

The bound $n/2$ is asymptotically tight even for graphs with large minimum degree as we can see in the next subsection (see Lemma \ref{examplemindegree}). However, it can be improved in many cases, even without the twin-freeness assumption. Let us provide two simple classes for which we can improve it.

\begin{remark}
Let $G$ be a graph on $n$ vertices with a twin-free spanning subgraph $G'$ with no isolated vertices. Then, $\old(G')\leq n/2$ by Theorem \ref{thm:n/2} and by Lemma \ref{spanningLemma}, we have $\ld(G)\leq\old(G')$. Hence, the existence of a twin-free spanning subgraph $G'$ is enough for Theorem \ref{thm:n/2} to hold.
\end{remark}



\begin{lemma}
Let $G$ be a graph on $n$ vertices with a Hamiltonian path. Then, $$\old(G)\leq \left\lceil\frac{2n}{5}\right\rceil.$$
\end{lemma}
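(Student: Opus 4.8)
The plan is to reduce the statement to a bound on the location-domination number of a path and then invoke Lemma~\ref{spanningLemma}. Recall that lemma gives $\old(G)=\min\{\ld(H)\mid H \text{ is a spanning subgraph of } G\}$, so it suffices to exhibit a single spanning subgraph whose undirected location-domination number is at most $\lceil 2n/5\rceil$. A Hamiltonian path of $G$ is exactly such a spanning subgraph, isomorphic to $P_n$, so the whole problem collapses to showing $\ld(P_n)\leq\lceil 2n/5\rceil$, and then $\old(G)\leq\ld(P_n)\leq\lceil 2n/5\rceil$.

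For the path bound I would either cite the classical value $\ld(P_n)=\lceil 2n/5\rceil$ (due to Slater), or, to keep the argument self-contained, build an explicit locating-dominating set of $P_n$ of this size by a periodic construction. Writing the path as $v_1v_2\cdots v_n$, I would select, inside each block $v_{5k+1},\dots,v_{5k+5}$, the two vertices $v_{5k+2}$ and $v_{5k+4}$. Then I would verify locally that this set locates and dominates: within a block, $v_{5k+3}$ is the unique non-selected vertex whose in-set is $\{v_{5k+2},v_{5k+4}\}$, while $v_{5k+1}$ and $v_{5k+5}$ each see a single, distinct selected neighbour; these singletons do not collide with those arising in neighbouring blocks because consecutive selected vertices are all distinct. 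This uses exactly $2$ vertices per $5$, giving size $2n/5$ on a complete number of blocks.

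Combining the two steps yields the claim. The only delicate point is the bookkeeping for the final, partial block of the periodic pattern, where one must confirm both that the last few vertices are still correctly separated and that the total count is exactly $\lceil 2n/5\rceil$; this amounts to a small case analysis on $n \bmod 5$ (for instance, adding one extra selected vertex when the trailing fragment would otherwise leave two non-selected vertices sharing the same single in-neighbour). This boundary check is the only genuine obstacle, and it is routine.
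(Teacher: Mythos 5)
Your proposal is correct and takes essentially the same route as the paper: reduce via Lemma~\ref{spanningLemma} to the Hamiltonian path viewed as a spanning subgraph isomorphic to $P_n$, then cite Slater's value $\ld(P_n)=\left\lceil 2n/5\right\rceil$. The explicit periodic construction you sketch as an alternative is sound in spirit but unnecessary, since the cited result already gives the bound exactly as in the paper.
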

\begin{proof}
The Hamiltonian path is a spanning subgraph. Since $\ld(P_n)=\left\lceil\frac{2n}{5}\right\rceil$ as proven in~\cite{slater1988dominating}, Lemma~\ref{spanningLemma} ensures that $\old(G)\leq \left\lceil\frac{2n}{5}\right\rceil$.
\end{proof} 

We say that a graph $G$ has a \textit{$P_{\geq t}$-factor} (or \textit{$t$-path factor}) if it has a spanning subgraph containing only paths of length at least $t$ as its components.

\begin{lemma}
Let $G$ be a claw-free graph with minimum degree $\delta\geq5t+3$, where $t$ is a positive integer, on $n$ vertices. Then, $$\old(G)\leq\frac{2t+4}{5t+8}n.$$
\end{lemma}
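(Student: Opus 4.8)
The plan is to combine Lemma~\ref{spanningLemma} with a suitable path factor, exactly as in the Hamiltonian-path lemma above. Since $\old(G)$ equals the minimum of $\ld(H)$ over all spanning subgraphs $H$ of $G$, it suffices to exhibit one spanning subgraph whose location-domination number is at most $\frac{2t+4}{5t+8}n$. The natural candidate is a spanning subgraph all of whose components are \emph{long} paths: on such a subgraph $\ld$ is additive over components (in a disjoint union of paths every non-selected vertex must be dominated inside its own component, so a locating-dominating set is just a union of locating-dominating sets of the components), and by \cite{slater1988dominating} we have $\ld(P_m)=\lceil 2m/5\rceil$. Hence the whole argument reduces to a per-path estimate.

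First I would invoke a path-factor theorem for claw-free graphs to obtain, from the hypothesis $\delta(G)\ge 5t+3$, a spanning subgraph $H$ whose components are paths $P_{m_1},\dots,P_{m_r}$ with each $m_i\ge 5t+4$ (that is, a $P_{\ge 5t+4}$-factor). This is the step where both claw-freeness and the exact value of the degree bound are used, and it is the main obstacle. The point is that a minimum degree $\delta$ already forces a single path on at least $\delta+1$ vertices; claw-freeness is what upgrades this to a \emph{spanning} family of paths each on at least $\delta+1=5t+4$ vertices. The threshold $5t+3$ is calibrated precisely so that every path of the factor has strictly more than $5t+3$ vertices, i.e.\ at least $5t+4$, which is exactly what the $\ld$-counting below needs.

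Given such an $H$, Lemma~\ref{spanningLemma} yields $\old(G)\le\ld(H)=\sum_{i=1}^r \lceil 2m_i/5\rceil$, and it then remains to prove the elementary inequality
\[
\left\lceil \tfrac{2m}{5}\right\rceil \le \frac{2t+4}{5t+8}\,m \qquad \text{for every integer } m\ge 5t+4 .
\]
Using $\lceil 2m/5\rceil\le (2m+4)/5$, this reduces to $(2m+4)(5t+8)\le 5m(2t+4)$, i.e.\ $4m\ge 20t+32$, which holds precisely when $m\ge 5t+8$. For the four remaining values $m\in\{5t+4,5t+5,5t+6,5t+7\}$ one checks the residues modulo $5$ directly: each gives ratio strictly below $\frac{2t+4}{5t+8}$, the extremal case $m\equiv 3\pmod 5$ only reappearing at $m=5t+8$ (where equality holds). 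This explains why $5t+4$ vertices per path already suffice and why the constant is tight.

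Summing the per-path inequality over all components gives $\ld(H)\le \frac{2t+4}{5t+8}\sum_{i=1}^r m_i=\frac{2t+4}{5t+8}n$, and therefore $\old(G)\le \frac{2t+4}{5t+8}n$, as required. The only non-routine ingredient is the claw-free long-path-factor result of the first step; everything else is the additivity of $\ld$ over components together with the one-line residue computation.
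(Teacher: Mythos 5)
Your proposal is correct and follows essentially the same route as the paper: reduce to a spanning subgraph via Lemma~\ref{spanningLemma}, invoke the claw-free path-factor theorem (Ando et al.) to get a $P_{\geq 5t+4}$-factor, use $\ld(P_m)=\lceil 2m/5\rceil$, and sum over components. The only difference is bookkeeping in the final step: you prove the per-path inequality $\lceil 2m/5\rceil \le \frac{2t+4}{5t+8}m$ for all $m\ge 5t+4$, whereas the paper bounds the total deficiency $\sum_i(\lceil 2m_i/5\rceil - 2m_i/5)$ by arguing the worst case is all paths of length $5t+8$; your version is, if anything, the more carefully justified of the two.
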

\begin{proof}
Let $G$ be a claw-free graph with minimum degree $\delta\geq5t+3$, where $t$ is a positive integer, on $n$ vertices. Ando et al. proved in~\cite{ando2002path} that every claw-free graph with minimum degree $\delta$ has a $P_{\geq \delta+1}$-factor.
Let $P_1,\ldots,P_q$ be the paths in the $P_{\geq\delta+1}$-factorization where $m_i=|P_i| \geq\delta+1$. As proven in~\cite{slater1988dominating}, each of these paths has a locating-dominating set of size exactly $\lceil 2m_i/5\rceil$. Hence, by Lemma \ref{spanningLemma}, we have $\old(G)\leq \sum_{i=1}^q\lceil2m_i/5\rceil=\sum_{i=1}^q(\lceil2m_i/5\rceil-2m_i/5)+\sum_{i=1}^q2m_i/5.$

Observe that we have $\lceil2m_i/5\rceil-2m_i/5\leq 4/5$ and this value is attained whenever $m_i=3\mod5$. It is easy to check that the sum is upper bounded by the case where each $m_i=5(t+1)+3$ because each $m_i\geq5t+4\geq9$ and the larger each $m_i$ is the smaller $q$ is. Hence, we have $\sum_{i=1}^q(\lceil2m_i/5\rceil-2m_i/5)+\sum_{i=1}^q2m_i/5\leq n/(5(t+1)+3)\cdot 4/5+2n/5=n(2t+4)/(5t+8)$. 
\end{proof}

\subsection{(Almost) regular graphs}\label{sec:regular}

The goal of this section is to prove that the $n/2$ bound can be drastically improved when the graph is (almost) regular. The proof is based on a probabilistic argument. Namely we prove that, if we select a random subset of vertices of the graph, then we can find an orientation where it is "almost" a locating-dominating set. That is, with positive probability, we can obtain a locating-dominating set from a random set by simply adding a small well-chosen subset of vertices to this random set.

A graph $G$ is $d$-regular if all the vertices of $G$ have degree exactly $d$. A class of graphs $\mathcal{G}$ is \emph{$k$-almost regular} if for every graph $G \in \mathcal{G}$, we have $\Delta(G) \le \delta(G)^k$.

\begin{theorem}\label{thm:regular}
Let $\mathcal{G}$ be a class of $k$-almost regular graphs. Then, there exists a constant $c_{\mathcal{G},k}$ such that, for every $G \in \mathcal{G}$, 
\[\old(G) \le c_{\mathcal{G},k} \cdot \frac{\log \delta}{\delta} \cdot n.  \]
\end{theorem}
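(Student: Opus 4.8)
The plan is to exploit the freedom of orientation to reduce the statement to a purely combinatorial problem about a random vertex set, and then to verify a Hall-type separation condition by a first-moment argument; almost-regularity enters precisely to control the hard case.

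\textbf{Reduction to a system of distinct representatives.} First I would record the key consequence of being allowed to choose the orientation. Fix any $S\subseteq V(G)$. The in-neighbourhoods $I(w)=N^-(w)\cap S$ of the vertices $w\notin S$ are affected only by the edges between $S$ and $V\setminus S$, and each such edge $sw$ can be oriented \emph{independently}: orienting it from $s$ to $w$ puts $s$ into $I(w)$, while orienting it from $w$ to $s$ keeps $s$ out. Hence, for \emph{any} prescribed family $(I(w))_{w\notin S}$ with $I(w)\subseteq N(w)\cap S$, there is an orientation of $G$ realising exactly these sets (edges inside $S$ or inside $V\setminus S$ are oriented arbitrarily). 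Therefore $S$ extends to a locating-dominating set of some orientation if and only if one can choose pairwise distinct nonempty sets $I(w)\subseteq N(w)\cap S$ for all $w\notin S$, that is, iff the bipartite ``object--label'' graph (objects $w\notin S$, labels the nonempty subsets of $S$, with $w$ joined to $B$ when $B\subseteq N(w)\cap S$) admits a matching saturating the objects. Note that, unlike in the undirected setting, twins cause no trouble here: two vertices with the same neighbourhood can be separated as soon as they have at least two common neighbours in $S$. So it suffices to produce $S$ of size $O\!\big(\tfrac{\log\delta}{\delta}\,n\big)$ for which such a system of distinct representatives exists.

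\textbf{A degree threshold forces the matching (the heart).} Put $r=\lceil\log_2\Delta\rceil+1$ and $t=2r$; since $\Delta\le\delta^k$ we have $t\le 4k\log_2\delta+2=O(\log\delta)$. I claim that if every $w\notin S$ satisfies $|N(w)\cap S|\ge t$, then the required matching exists by Hall's theorem. To check Hall for a set $W$ of objects, I would count only the available labels of size exactly $r$: each $w\in W$ offers $\binom{|N(w)\cap S|}{r}\ge\binom{t}{r}\ge (t/r)^r=2^r\ge\Delta$ of them, whereas a fixed label $B$ of size $r$ has $B\subseteq N(w)$ for at most $\Delta$ objects $w$ (take $s\in B$; then $w\in N(s)$ and $\deg(s)\le\Delta$). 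Consequently the number of distinct available labels is at least $|W|\binom{t}{r}/\Delta\ge |W|$, so Hall's condition holds. Thus a \emph{uniform} lower bound of $t=O(\log\delta)$ on the number of neighbours in $S$ guarantees that $S$ works.

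\textbf{The random set and the first moment.} Include each vertex in $S_0$ independently with probability $p=C\,\tfrac{\log\delta}{\delta}$, where $C=C(k)$ is a large constant. For any $w$, the quantity $|N(w)\cap S_0|$ is binomial with mean $p\deg(w)\ge p\delta=C\log\delta$, which for $C$ large enough (depending on $k$, so that the mean exceeds $2t$) yields, by a Chernoff bound, $\Pr\big[\,|N(w)\cap S_0|<t\,\big]\le \delta^{-C'}$ with $C'=C'(C)\ge 1$. Let $B$ be the set of ``poor'' vertices $w\notin S_0$ with $|N(w)\cap S_0|<t$ (this already includes the undominated ones). Then $\mathbb{E}\,|S_0|=pn$ and $\mathbb{E}\,|B|\le n\delta^{-C'}\le n/\delta$, hence $\mathbb{E}\big[\,|S_0|+|B|\,\big]\le (C+1)\tfrac{\log\delta}{\delta}\,n$, so some outcome achieves $|S_0|+|B|\le (C+1)\tfrac{\log\delta}{\delta}\,n$. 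For that outcome set $S=S_0\cup B$: every vertex outside $S$ is not poor, hence has at least $t$ neighbours in $S_0\subseteq S$, and enlarging the code only removes objects and enlarges the label universe, so the hypothesis of the previous step holds. The matching therefore exists, $S$ is locating-dominating in a suitable orientation, and $\old(G)\le |S|\le c_{\mathcal{G},k}\tfrac{\log\delta}{\delta}\,n$ with $c_{\mathcal{G},k}=C+1$. The bounded-$\delta$ regimes, where $\tfrac{\log\delta}{\delta}$ is bounded below, are absorbed by enlarging the constant and using the trivial bound $\old(G)\le n$.

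\textbf{Main obstacle.} The delicate step is the separation guarantee of the second paragraph, and more precisely ruling out configurations in which many objects have their entire $S$-neighbourhood confined to one common small set, forcing them to compete for its few subsets. Almost-regularity is exactly what tames this: it keeps $\log\Delta=O(\log\delta)$, so the threshold $t$ stays $O(\log\delta)$, and it caps at $\Delta$ the number of objects any single label can serve, which is the estimate driving the Hall count.
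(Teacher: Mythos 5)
Your proposal is correct, and its overall architecture is the same as the paper's: pick a Bernoulli random set with probability $\Theta\left(\frac{\log\delta}{\delta}\right)$, add all ``poor'' vertices having too few neighbours in it, and then invoke a deterministic lemma stating that once every vertex outside $S$ has $\Omega(\log\Delta)$ neighbours in $S$, the edges between $S$ and $V\setminus S$ can be oriented so that the in-neighbourhoods of the outside vertices are nonempty and pairwise distinct; almost-regularity enters in both proofs only through $\log\Delta\le k\log\delta$. The genuine difference is how this key lemma is established. The paper proves it greedily: ordering $V\setminus X$ as $v_1,\dots,v_t$, it assigns to each $v_{r+1}$ a nonempty subset of $N(v_{r+1})\cap X$ containing a fixed neighbour $u$; there are at least $2^{\log_2\Delta}\ge\Delta$ such subsets, while at most $\Delta-1$ of them can already be taken (any earlier vertex whose set contains $u$ must be a neighbour of $u$), so the threshold $\log_2\Delta+1$ suffices. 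You instead phrase the task as finding a system of distinct representatives and verify Hall's condition by double counting labels of size exactly $r=\lceil\log_2\Delta\rceil+1$: each object offers at least $\binom{2r}{r}\ge 2^r\ge\Delta$ such labels, and each label serves at most $\Delta$ objects. Both arguments are sound; the paper's greedy argument gets away with the threshold $\log_2\Delta+1$ (hence a slightly better constant) and is more elementary, while your Hall-type formulation isolates cleanly why the orientation freedom reduces everything to a matching problem. Your probabilistic step is also marginally tidier (a single first-moment computation on $|S_0|+|B|$ instead of the paper's Chernoff-plus-two-applications-of-Markov), but it is the same calculation in substance.
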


Before proving Theorem~\ref{thm:regular}, let us make a couple of remarks. First notice that the bound is tight up to a constant multiplicative factor since, by Theorem \ref{thm:clique}, $\Theta(\log n)$ vertices are needed for cliques.

Another hypothesis of Theorem~\ref{thm:regular} asserts that there is a polynomial gap between the minimum and maximum degree. One can wonder if a similar result holds if we only have some assumptions on the minimum degree of the graph. We can prove that it is not true:

\begin{lemma}\label{examplemindegree}
Let $d,n_0 \in \mathbb N$, and $\epsilon>0$ be a real.
Then, there exists a twin-free graph $G$ of minimum degree at least $d$, order $n\geq n_0$ such that
\[ \old(G) \ge (\frac 12 - \epsilon) n \]
\end{lemma}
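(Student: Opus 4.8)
The plan is to exploit Lemma~\ref{spanningLemma}, which states that $\old(G)=\min\{\ld(H)\mid H\text{ is a spanning subgraph of }G\}$. Hence it suffices to build a twin-free graph $G$ of minimum degree at least $d$ in which \emph{every} spanning subgraph has a large location-domination number. The guiding principle is the tension made visible by Theorem~\ref{thm:regular}: a large minimum degree forces many edges (and, to remain twin-free, enough of them to distinguish vertices), yet to keep $\old$ close to $n/2$ we must ensure that none of these edges can be reused to \emph{locate} cheaply. I would resolve this tension by concentrating all the ``domination power'' into a tiny hub set whose size depends only on $d$, while distinguishing the remaining vertices through a single private edge each.

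Concretely, I would take a hub set $U=\{u_1,\dots,u_d\}$ together with $m$ pairs $\{a_i,b_i\}$ for $1\le i\le m$, and insert the edges $a_ib_i$, all edges from $a_i$ to $U$, and all edges from $b_i$ to $U\setminus\{u_1\}$; to keep the hubs pairwise non-twin I would also join them along a path $u_1u_2\cdots u_d$. Then $b_i$ has degree exactly $d$ and every other vertex has degree at least $d$, so $\delta(G)\ge d$. Twin-freeness is a short case check: inside a pair, $a_i$ and $b_i$ are separated by $u_1$; two distinct pairs are separated by their (distinct) partners; and the hubs are pairwise distinguished because $u_1$ sees no $b_i$ while $u_2,\dots,u_d$ receive pairwise different neighbours along the path. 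Since $n=2m+d$ with $d$ fixed, taking $m$ large makes $n\ge n_0$, and $G$ has no isolated vertex so it is consistent with the bound $\old(G)\le n/2$ of Theorem~\ref{thm:n/2}.

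The heart of the argument is a counting bound valid for \emph{every} spanning subgraph $H$ and \emph{every} locating-dominating set $S$ of $H$. The key structural fact is that in $G$ a pair-vertex is adjacent only to hubs and to its own partner, so for $v\notin S$ one has $N_H(v)\cap S\subseteq (U\cap S)\cup(\{\mathrm{partner}(v)\}\cap S)$. I would group the pair-vertices lying outside $S$ according to their trace $N_H(v)\cap U\cap S$ on the hubs: there are at most $2^{|U\cap S|}\le 2^{d}$ groups, and within a single group two vertices whose partner does not contribute to their neighbourhood would have equal neighbourhoods in $S$, which is forbidden. Hence at most $2^{d}$ pair-vertices outside $S$ have no partner contribution, while each of the remaining ones forces its partner into $S$; since the partner map is an involution, there are at most $|S\cap\text{pairs}|$ of them. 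Combining, $2m-|S\cap\text{pairs}|\le 2^{d}+|S\cap\text{pairs}|$, so $|S|\ge|S\cap\text{pairs}|\ge m-2^{d-1}$. As this holds for all $H$, Lemma~\ref{spanningLemma} gives $\old(G)\ge m-2^{d-1}=\tfrac{n-d}{2}-2^{d-1}$, which exceeds $(\tfrac12-\epsilon)n$ once $m\ge\frac{(1/2-\epsilon)d+2^{d-1}}{2\epsilon}$.

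I expect the main obstacle to be conceptual rather than computational: one must design the pairs so that breaking the twins costs as little locating power as possible. Adding the single partner edge is exactly the minimal perturbation that keeps $G$ twin-free while preventing any cleverly chosen spanning subgraph from separating $a_i$ and $b_i$ using anything other than $U$ or the partner itself; this is what turns the $2^{d}$ term into a harmless \emph{additive} constant instead of a multiplicative loss, so that the bound degrades only by the fixed quantity $d/2+2^{d-1}$ and stays asymptotically at $n/2$. A secondary point needing care is guaranteeing that the hubs are themselves pairwise non-twin (handled by the path together with the special role of $u_1$), since otherwise $G$ would violate the twin-free hypothesis.
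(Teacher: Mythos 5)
Your proposal is correct, and it reaches the bound by a genuinely different route than the paper. The paper's construction $G_{p,q}$ consists of $p$ disjoint copies of $P_4$ joined completely to a core of $q$ vertices inducing a cycle, and the lower bound is proved directly on an arbitrary orientation $D$: the $P_4$'s are pigeonholed by the orientation pattern of their edges towards the core (exponentially many in $q$, but independent of $p$), and among any three copies sharing a pattern at least one must contain two vertices of any locating-dominating set, giving $\old(G_{p,q}) \ge 2p - 2^{4q+2}$. You instead attach $m$ pairs $\{a_i,b_i\}$ (almost) completely to a path core $U$ of size $d$, and you never touch orientations at all: invoking Lemma~\ref{spanningLemma}, you bound $\ld(H)$ for every spanning subgraph $H$ by pigeonholing the pair-vertices outside $S$ according to their trace $N_H(v)\cap U\cap S$ (at most $2^d$ classes, with at most one vertex per class escaping the obligation to place its partner in $S$), giving $|S| \ge m - 2^{d-1}$. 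Both arguments share the same philosophy --- many constant-size gadgets on a core whose size depends only on $d$, with an additive loss exponential in $d$ arising from a pigeonhole over connection types --- but your detour through Lemma~\ref{spanningLemma} trades orientations for traces and is arguably cleaner: your gadgets have size $2$ and pay one vertex each rather than size $4$ paying two, and the asymmetric attachment of $b_i$ (omitting $u_1$) is a minimal device ensuring twin-freeness, playing the role of the paper's core cycle. Two details are worth writing out in a final version: the hub degrees are at least $d$ only once $m \ge d-1$ (harmless, since $m\to\infty$), and the hub-versus-hub twin check should explicitly cover closed neighbourhoods of consecutive path vertices $u_j,u_{j+1}$, which differ because $u_{j-1}\neq u_{j+2}$ (with the obvious adjustment at the endpoints of the path).
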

\begin{proof}
Let $p$ and $q$ be two integers with $p \geq q \geq 4$. 
We define the graph $G_{p,q}$ of order $n=4p+q$ as a disjoint union of $p$ paths on four vertices complete to a set $\{v_1,v_2,...,v_q\}$ of size $q$ such that the subgraph induced by $\{v_1,v_2,...,v_q\}$ is a cycle. An example is given by Figure \ref{figalmostregular}. As $p \geq q$ the minimal degree is $ \delta(G)=q+1$ and one can check $G_{p,q}$ is twin-free.\\
Let us prove that $\old(G_{p,q}) \geq 2p-2^{4q+2}$ which is enough to obtain the lemma since then, $\old(G_{p,q})/n$ will tend to $\frac 12$, when $p\to \infty$.

Let $D$ be an orientation of $G_{p,q}$ and let $S$ be an optimal locating-dominating set of $D$. Let $G_1=G_{p,q}[p_1,p_2,p_3,p_4]$,  $G_2=G_{p,q}[q_1,q_2,q_3,q_4]$ and $G_3=G_{p,q}[r_1,r_2,r_3,r_4]$ be three $P_4$ of $G_{p,q}$ which belongs to the disjoint union of $P_4$'s. If, for every $1 \leq i \leq 4$ and every $ 1 \leq j \leq q$, the edges $p_iv_j$, $q_iv_j$ and $r_iv_j$ have the same orientation in $D$, then $|S \cap V(G_1)| \geq 2$ or $|S \cap V(G_2)| \geq 2$ or $|S \cap V(G_3)| \geq 2$. Indeed, if there is at most one vertex of $S$ in each subgraph, then in each subgraph $G_i$ one extremity have no neighbour in $G_i \cap S$. Hence we can assume this is the case for $p_1$ and $q_1$. Then, $p_1$ and $q_1$ have the same neighbourhood in $S$, a contradiction. \\
There are $2^{q^4}$ orientations of edges between a set of four vertices and a set of $q$ vertices so at least $p-2 \times 2^{q^4}=p-2^{q^4+1}$ paths of the disjoint union contain at least two elements of $S$. So $\old(G_{p,q}) \geq 2p-2^{4q+2}$.
\end{proof}
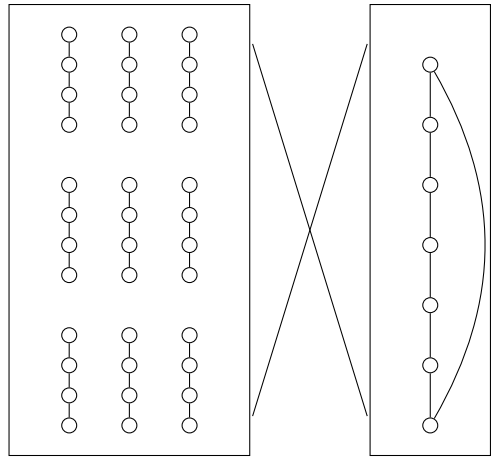
\begin{figure}[h]
    \centering
    \begin{tikzpicture}[scale=0.4]

\tikzset{circle/.style={draw,ellipse,minimum size=1mm,inner sep=2pt}}

\node[circle] (1a) at (-10,1) {};
\node[circle] (2a) at (-10,2) {};
\node[circle] (3a) at (-10,3) {};
\node[circle] (4a) at (-10,4) {};

\draw (1a) to (2a);
\draw (2a) to (3a);
\draw (3a) to (4a);

\node[circle] (1b) at (-10,6) {};
\node[circle] (2b) at (-10,7) {};
\node[circle] (3b) at (-10,8) {};
\node[circle] (4b) at (-10,9) {};

\draw (1b) to (2b);
\draw (2b) to (3b);
\draw (3b) to (4b);

\node[circle] (1c) at (-10,11) {};
\node[circle] (2c) at (-10,12) {};
\node[circle] (3c) at (-10,13) {};
\node[circle] (4c) at (-10,14) {};

\draw (1c) to (2c);
\draw (2c) to (3c);
\draw (3c) to (4c);

\node[circle] (1d) at (-12,1) {};
\node[circle] (2d) at (-12,2) {};
\node[circle] (3d) at (-12,3) {};
\node[circle] (4d) at (-12,4) {};

\draw (1d) to (2d);
\draw (2d) to (3d);
\draw (3d) to (4d);

\node[circle] (1e) at (-12,6) {};
\node[circle] (2e) at (-12,7) {};
\node[circle] (3e) at (-12,8) {};
\node[circle] (4e) at (-12,9) {};

\draw (1e) to (2e);
\draw (2e) to (3e);
\draw (3e) to (4e);

\node[circle] (1f) at (-12,11) {};
\node[circle] (2f) at (-12,12) {};
\node[circle] (3f) at (-12,13) {};
\node[circle] (4f) at (-12,14) {};

\draw (1f) to (2f);
\draw (2f) to (3f);
\draw (3f) to (4f);

\node[circle] (1g) at (-14,1) {};
\node[circle] (2g) at (-14,2) {};
\node[circle] (3g) at (-14,3) {};
\node[circle] (4g) at (-14,4) {};

\draw (1g) to (2g);
\draw (2g) to (3g);
\draw (3g) to (4g);

\node[circle] (1h) at (-14,6) {};
\node[circle] (2h) at (-14,7) {};
\node[circle] (3h) at (-14,8) {};
\node[circle] (4h) at (-14,9) {};

\draw (1h) to (2h);
\draw (2h) to (3h);
\draw (3h) to (4h);

\node[circle] (1i) at (-14,11) {};
\node[circle] (2i) at (-14,12) {};
\node[circle] (3i) at (-14,13) {};
\node[circle] (4i) at (-14,14) {};

\draw (1i) to (2i);
\draw (2i) to (3i);
\draw (3i) to (4i);

\draw (-16,0) rectangle (-8,15);
\draw (-4,0) rectangle (0,15);

\node[circle] (1) at (-2,1) {};
\node[circle] (2) at (-2,3){};
\node[circle] (3) at (-2,5){};
\node[circle] (4) at (-2,7){};
\node[circle] (5) at (-2,9){};
\node[circle] (6) at (-2,11){};
\node[circle] (7) at (-2,13){};

\draw (1) to (2);
\draw (2) to (3);
\draw (3) to (4);
\draw (4) to (5);
\draw (5) to (6);
\draw (6) to (7);
\draw (7) to[bend left] (1);

\node (i1) at (-8,14) {};
\node (i2) at (-4,1) {};
\draw (i1) to (i2);
\node (i3) at (-8,1) {};
\node (i4) at (-4,14) {};
\draw (i3) to (i4);

\end{tikzpicture}
    \caption{Example of $G_{9,7}$ of Lemma \ref{examplemindegree}}
    \label{figalmostregular}
\end{figure}

The rest of this section is devoted to prove Theorem~\ref{thm:regular}. Let $G$ be a graph in $\mathcal{G}$. We can assume that $G$ has minimum degree at least $e^2$. (For graphs of degree less than $e^2$, the conclusion indeed follows since we can modify the constant to guarantee that $c_{\mathcal{G},k} \cdot \frac{\log \delta}{\delta} \cdot n$ is at least $n$). 
The proof is based on a probabilistic argument. We will select a subset of vertices at random and prove that, by only modifying it slightly (with high probability), we can construct an orientation of $G$ such that this set is a locating-dominating set.

Let us first recall the Chernoff inequality.

\begin{lemma}\label{lem:chernoff}[Chernoff]
Let $X= \sum_{i=1}^n X_i$ where $X_i=1$ with probability $p$ and $0$ otherwise and where all the $X_i$ are independent. Let $\mu= \mathbb{E}(X)$ and $r>0$. We have
\[ \mathbb{P}(X \le (1-r) \mu) \le e^{- \mu \cdot r^2/2}.\]
\end{lemma}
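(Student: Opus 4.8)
The plan is to use the standard exponential moment method (the argument that gives Chernoff-type bounds their name). Fix a parameter $t>0$. Since the map $x \mapsto e^{-tx}$ is decreasing, the event $\{X \le (1-r)\mu\}$ coincides with $\{e^{-tX} \ge e^{-t(1-r)\mu}\}$, so applying Markov's inequality to the nonnegative random variable $e^{-tX}$ yields
\[ \mathbb{P}(X \le (1-r)\mu) \le e^{t(1-r)\mu}\, \mathbb{E}\!\left(e^{-tX}\right). \]
I would then evaluate the moment generating function using independence: writing $X=\sum_i X_i$ with the $X_i$ independent and equal to $1$ with probability $p$, we get $\mathbb{E}(e^{-tX}) = \prod_{i=1}^n \mathbb{E}(e^{-tX_i}) = \big(1 - p(1-e^{-t})\big)^n$. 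Invoking the elementary inequality $1+y \le e^{y}$ with $y = -p(1-e^{-t})$ and recalling $\mu = np$, this factor is at most $e^{-\mu(1-e^{-t})}$. Combining the two displays gives the clean intermediate bound $\mathbb{P}(X \le (1-r)\mu) \le \exp\!\big(\mu\,[\,t(1-r)-1+e^{-t}\,]\big)$, valid for every $t>0$.

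Next I would optimize the exponent over $t>0$. Differentiating $t(1-r)-1+e^{-t}$ in $t$ and setting the result to zero gives $e^{-t}=1-r$, i.e. $t=\ln\frac{1}{1-r}$, which is a legitimate positive choice precisely when $0<r<1$. Substituting this value back collapses the exponent and leaves
\[ \mathbb{P}(X \le (1-r)\mu) \le \exp\!\big(\mu\,[\,-(1-r)\ln(1-r)-r\,]\big). \]
At this point the probabilistic content is finished, and the only genuine work remaining — the step I expect to be the main (indeed the sole non-mechanical) obstacle — is the purely analytic claim that $-(1-r)\ln(1-r)-r \le -r^2/2$ on $(0,1)$, which is what converts the sharp but unwieldy bound above into the stated form $e^{-\mu r^2/2}$.

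To settle that inequality I would set $h(r) := -r-(1-r)\ln(1-r)+\tfrac{r^2}{2}$ and show $h(r)\le 0$ for $0 \le r<1$ by a short calculus argument: one checks $h(0)=0$, computes $h'(r)=r+\ln(1-r)$ so that $h'(0)=0$, and then $h''(r)=-\tfrac{r}{1-r}\le 0$ on $[0,1)$. Hence $h'$ is nonincreasing and stays $\le 0$, so $h$ is nonincreasing and stays $\le 0$, giving the claim. Finally I would dispose of the degenerate range $r\ge 1$ separately, where $(1-r)\mu\le 0$: for $r>1$ the event $\{X\le(1-r)\mu\}$ is empty since $X\ge 0$, while for $r=1$ we have $\mathbb{P}(X\le 0)=\mathbb{P}(X=0)=(1-p)^n \le e^{-np}=e^{-\mu}\le e^{-\mu r^2/2}$, so the stated bound holds in every regime.
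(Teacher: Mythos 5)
Your proof is correct, and there is nothing in the paper to compare it against: the paper states this lemma only as a recalled classical fact (``Let us first recall the Chernoff inequality'') and gives no proof, citing none. Your argument is the standard exponential-moment derivation, and every step checks out: the Markov bound applied to $e^{-tX}$, the factorization of the moment generating function with $\mathbb{E}(e^{-tX_i}) = 1 - p(1-e^{-t})$, the bound $1+y \le e^y$ yielding $e^{-\mu(1-e^{-t})}$, the optimization giving $t = \ln\frac{1}{1-r}$ and exponent $\mu\bigl[-(1-r)\ln(1-r)-r\bigr]$, and the calculus verification that $h(r) = -r-(1-r)\ln(1-r)+\tfrac{r^2}{2}$ satisfies $h(0)=h'(0)=0$ and $h''(r) = -\tfrac{r}{1-r}\le 0$ on $[0,1)$. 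You were also right to flag the analytic comparison $-(1-r)\ln(1-r)-r \le -r^2/2$ as the one non-mechanical step, and your handling of the regimes $r=1$ and $r>1$ is careful (the only microscopic omission is the degenerate case $\mu=0$ with $r>1$, where the event $\{X\le 0\}$ is not empty but has probability $1$, matched by the right-hand side $e^{0}=1$ --- trivially fine). In the context of the paper, where the lemma is used as a black box inside the probabilistic argument of Theorem~\ref{thm:regular}, your writeup supplies a self-contained proof that the authors deliberately omitted; note that the constant $\tfrac{1}{2}$ in the exponent is exactly what the paper needs in Claim~\ref{clm:goodset}, and your derivation shows it is available without any slack in the standard argument.
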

Also recall the Markov's inequality: If $X$ is a random variable taking non-negative values and $a > 0$, then:
\[ \mathbb{P}(X \geq a) \leq \frac{\mathbb{E}[X]}{a}. \] 

In order to prove Theorem~\ref{thm:regular}, we also need the following general lemma:

\begin{lemma}\label{lem:logDelta}
Let $G$ be a graph and $X$ be a subset of vertices such that every vertex $v$ not in $X$ is adjacent to at least $ \log \Delta +1$ vertices of $X$. Then, there exists an orientation $D$ of $G$ where $X$ is a locating-dominating set.
\end{lemma}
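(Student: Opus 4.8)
The plan is to reduce the statement to a purely combinatorial labelling problem and then solve that with Hall's marriage theorem; the orientation of $G$ will be read off directly from the labelling.

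First I would isolate what actually matters. The only arcs that can influence the in-neighbourhood inside $X$ of a vertex outside $X$ are the edges of $G$ with exactly one endpoint in $X$, and every such edge is incident to a \emph{single} vertex of $V\setminus X$. Hence orienting these edges lets us prescribe, completely independently for each $v\notin X$, the set $N_D^-(v)\cap X$ to be an arbitrary subset of $N(v)\cap X$: we orient $sv$ from $s$ to $v$ precisely when we want $s\in N_D^-(v)\cap X$, and we orient all remaining edges (those inside $X$ or inside $V\setminus X$) arbitrarily, since they are irrelevant here. Consequently it suffices to exhibit an injective map $f$ from $V\setminus X$ to the non-empty subsets of $X$ with $f(v)\subseteq N(v)\cap X$ for every $v$; taking the orientation above with $N_D^-(v)\cap X=f(v)$ then makes all these in-neighbourhoods non-empty and pairwise distinct, i.e. $X$ a locating-dominating set of $D$.

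To produce $f$, I would set up the bipartite graph whose left side is $V\setminus X$, whose right side is the set of non-empty subsets of $X$, and in which $v$ is joined to every non-empty $T\subseteq N(v)\cap X$; then $f$ is exactly a matching saturating the left side, so by Hall's theorem it is enough to verify the marriage condition. Fix $W\subseteq V\setminus X$ and let $\mathcal F$ be the family of non-empty sets $T$ contained in $N(w)\cap X$ for at least one $w\in W$; I must show $|\mathcal F|\ge |W|$. I would double count the pairs $(w,T)$ with $w\in W$ and $\emptyset\neq T\subseteq N(w)\cap X$. On one hand, the hypothesis $|N(w)\cap X|\ge \log\Delta+1$ (with $\log$ base $2$) gives each $w$ at least $2^{\log\Delta+1}-1=2\Delta-1$ such sets, so the number of pairs is at least $|W|(2\Delta-1)$. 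On the other hand, fixing any element $s\in T$, a set $T$ can satisfy $T\subseteq N(w)\cap X$ only for $w\in N(s)$, and $|N(s)|\le\Delta$; thus each $T\in\mathcal F$ is counted at most $\Delta$ times, bounding the number of pairs by $|\mathcal F|\cdot\Delta$. Combining the two estimates yields $|\mathcal F|\ge |W|(2\Delta-1)/\Delta>|W|$, so Hall's condition holds with room to spare and the desired injection $f$ exists.

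The heart of the argument is the reduction in the first paragraph: recognising that the in-neighbourhood of each $v\notin X$ inside $X$ can be chosen \emph{freely and independently}, because each edge between $X$ and $V\setminus X$ is private to its endpoint in $V\setminus X$. Once this is in place, the existence of the labelling is the short Hall/double-counting computation above. I expect the only subtleties to be purely bookkeeping ones: confirming that edges lying inside $X$ or inside $V\setminus X$ play no role, and using $\log_2$ so that $2^{\log\Delta+1}=2\Delta$ makes each vertex contribute strictly more admissible labels than the crowding factor $\Delta$ allows.
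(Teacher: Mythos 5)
Your proof is correct, and its second half takes a genuinely different route from the paper's. The reduction in your first paragraph is exactly the paper's closing step: both proofs observe that each edge between $X$ and $V\setminus X$ is private to its endpoint in $V\setminus X$, so the problem reduces to finding an injection $f$ from $V\setminus X$ into the non-empty subsets of $X$ with $f(v)\subseteq N(v)\cap X$. Where you diverge is in producing $f$: the paper does it greedily, processing $V\setminus X$ in an arbitrary order and, at each step, fixing a neighbour $u\in N(v_{r+1})\cap X$ and noting that the $2^{|N(v_{r+1})\cap X|-1}\ge\Delta$ subsets containing $u$ cannot all have been used, since a previously chosen set can contain $u$ only if its vertex is a neighbour of $u$, and $u$ has at most $\Delta-1$ neighbours besides $v_{r+1}$. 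You instead verify Hall's condition globally by double counting: each $w$ admits at least $2\Delta-1$ candidate sets, while each candidate set, via any fixed element $s\in T$, can serve only the at most $\Delta$ vertices in $N(s)$, giving $|\mathcal F|\ge|W|(2\Delta-1)/\Delta\ge|W|$. Both arguments exploit the same numerical slack (exponentially many admissible sets versus a crowding factor of $\Delta$), but the paper's greedy argument is self-contained and immediately algorithmic, whereas yours packages the counting into a single clean application of a standard theorem and makes the global slack explicit. One cosmetic point: your claim that Hall's condition holds \emph{strictly} fails when $\Delta=1$, since then $(2\Delta-1)/\Delta=1$; the inequality $|\mathcal F|\ge|W|$ still holds there, so Hall's theorem applies and nothing breaks, but you should state the conclusion with $\ge$ rather than $>$.
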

\begin{proof}
Let $V'= \{v_1,\ldots,v_t\}$ be an arbitrary ordering of $V \setminus X$. Let us prove that we can associate to each vertex $v_i$ of $V'$ a non-empty subset $S_i$ of $X \cap N(v_i)$ such that, for every $i \ne j$, $S_i \ne S_j$. 

Let us prove that such a collection of sets $S_i$ can be found greedily. Since $v_1$ is adjacent to at least $\log \Delta +1 \ge 1$ vertex of $X$, we can indeed find such a set for $v_1$. Assume that we have already selected $S_1,\ldots,S_r$. Let us prove that we can select a set for $v_{r+1}$. Let $Y_{r+1} = N(v_{r+1}) \cap X$ and $u \in Y_{r+1}$. The number of subsets of $Y_{r+1}$ containing $u$ is $2^{|Y_{r+1}|-1} \ge 2^{\log \Delta+1} \ge \Delta$. So at least one of them has not been selected since a subset $S_j$ can contain $u$ only if $v_ju$ is an edge. We arbitrarily select a subset of $Y_{r+1}$ containing $u$ that is distinct from $S_1,\ldots,S_r$, which completes the first part of the proof.

Next, for every $x \in X$ in $N(v_i)$, we orient the edges from $v_i$ to $x_j$ if $x \notin S_i$ and orient from $x$ to $v_i$ if $x  \in S_i$. One can easily check that $X$ is a locating-dominating set of this orientation of the graph.
\end{proof}

We now have all the ingredients to prove Theorem~\ref{thm:regular}.

\begin{proof}[Proof of Theorem~\ref{thm:regular}]
Let us first start with the following claim:

\begin{claim}\label{clm:goodset}
Let $c \ge 2$ be  constant. For every graph $G$ of minimum degree $\delta$, there exists a subset $X$ of $25 c \cdot (\log \delta) / \delta \cdot n$ vertices\footnote{All the logarithms of the paper have to be understood base $2$.} of $G$ such that all the vertices of $V \setminus X$  have at least $c \log \delta$ neighbours in $X$. 
\end{claim}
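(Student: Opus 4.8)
The plan is to use the probabilistic method together with an alteration step. First I would fix $p = a\cdot c\log\delta/\delta$ for a suitable absolute constant $a$ (say $a=12$), under the harmless assumption that $\delta$ is large enough that $p\le 1$: if instead $p>1$, then $12c\log\delta>\delta$ forces $25c\log\delta/\delta\cdot n > n$, so one may simply take $X=V$ and the claim is trivial. I would then sample a random set $X_0$ by placing each vertex of $G$ into $X_0$ independently with probability $p$.

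The key idea is to repair $X_0$ deterministically rather than hope it already works. Call a vertex $v$ \emph{bad} if $v\notin X_0$ and $v$ has strictly fewer than $c\log\delta$ neighbours in $X_0$, let $B$ be the set of bad vertices, and set $X=X_0\cup B$. Then every vertex of $V\setminus X$ lies outside $X_0$ and is not bad, hence has at least $c\log\delta$ neighbours in $X_0\subseteq X$. Thus the required covering property holds for $X$ with probability $1$, for \emph{every} realization of $X_0$; the only random quantity left to control is $|X|\le |X_0|+|B|$, and it suffices to bound its expectation.

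For the size I would argue as follows. Clearly $\mathbb{E}[|X_0|]=pn$, and $\mathbb{E}[|B|]=\sum_v \mathbb{P}(v\text{ bad})$. Fixing $v$ and letting $Z_v$ be the number of its neighbours landing in $X_0$, the variable $Z_v$ is a sum of $d(v)$ independent Bernoulli($p$) variables, so $\mu_v:=\mathbb{E}[Z_v]=d(v)p\ge \delta p = a\,c\log\delta$. Applying the Chernoff bound of Lemma~\ref{lem:chernoff} with the threshold $c\log\delta=(1-r)\mu_v$, i.e.\ $r=1-c\log\delta/\mu_v\ge 1-1/a$, gives $\mathbb{P}(v\text{ bad})\le \mathbb{P}(Z_v\le c\log\delta)\le e^{-\mu_v r^2/2}$. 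With $a=12$ the exponent is at least $(12c\log\delta)(11/12)^2/2$; converting to base $\delta$ via $\log_2\delta=\ln\delta/\ln 2$, this tail is polynomially small, comfortably at most $\delta^{-2}$. Hence $\mathbb{E}[|B|]\le n\delta^{-2}\le (c\log\delta/\delta)\,n$, and therefore $\mathbb{E}[|X|]\le pn+(c\log\delta/\delta)\,n\le (a+1)\,c\log\delta/\delta\cdot n\le 25\,c\log\delta/\delta\cdot n$. Since the covering property holds deterministically while $|X|$ is at most its own mean for at least one outcome, there exists a realization yielding a set $X$ of the desired size and with the desired property.

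The only genuinely delicate point is the bookkeeping of constants: $a$ must be large enough that the Chernoff tail beats $\log\delta/\delta$ (so that the alteration $B$ is cheap) yet small enough that the dominant term $pn$ stays under the target $25\,c\log\delta/\delta\cdot n$; the choice $a=12$ satisfies both with room to spare. I would also remark that, for this claim alone, Markov's inequality is not actually needed — a plain expectation argument suffices precisely because the covering property is deterministic — though Markov will presumably be invoked later when combining this claim with Lemma~\ref{lem:logDelta} to finish the proof of Theorem~\ref{thm:regular}.
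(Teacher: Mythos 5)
Your proof is correct and follows the same basic strategy as the paper's: sample each vertex independently with probability $\Theta(c\log\delta/\delta)$, use the Chernoff bound to show each vertex outside the sample is deficient with probability at most an inverse polynomial in $\delta$, and repair by adding the deficient vertices. The one genuine difference is in how you extract existence: the paper keeps the two phases random, applies Markov's inequality twice (once to the sample size, once to the number of repaired vertices) and takes a union bound to get an outcome of positive probability, whereas you observe that after the alteration the covering property holds \emph{deterministically} for every realization, so a plain first-moment argument on $\mathbb{E}\left[|X_0|+|B|\right]$ suffices. This is a cleaner finish; it avoids the probability bookkeeping entirely and in fact yields the better constant $13c\log\delta/\delta\cdot n$ in place of $25c\log\delta/\delta\cdot n$. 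Your explicit handling of the degenerate case $p>1$ (take $X=V$) is also a small point of rigor that the paper only covers implicitly via its standing assumption $\delta\ge e^2$ in the proof of Theorem~\ref{thm:regular}.
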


\begin{proof}
Start with a set $X$ which is empty and add each vertex in $X$ with probability $6c \cdot (\log \delta)/\delta$.
So $\mathbb{E}(|X|) = 6c \cdot \frac{\log \delta}{\delta} \cdot n$. Moreover $\mathbb{P}(|X| \ge 24 c \cdot \frac{\log \delta}{\delta} \cdot n) \le \frac{1}{4}$ by Markov's inequality.

Let $u$ be a vertex of $G$. Since $N(u)$ has size at least $\delta$, $\mathbb{E}(|X \cap N(u)|) \ge 6c \cdot \log \delta$. Thus, Lemma~\ref{lem:chernoff} ensures that 
\[ \mathbb{P}(|X \cap N(u) | \le c \cdot \log \delta) = \mathbb{P}(|X \cap N(u) | \le (1-5/6) 6c \cdot \log \delta)  \le e^{-6c \cdot \log \delta \cdot (5/6)^2/2} \le \frac{1}{\delta^3}   \]
as long as $c \ge 2$.

Let us next enrich $X$ with all the vertices $u$ such that $|X \cap N(u)|$ is less than $c \log \delta$. By union bound, the average number of vertices that are added in $X$ is at most $n/\delta^3$. 
Moreover, using again Markov's inequality, we know that, with probability at least $1/2$, the number of vertices that are added in $X$ is at most $ 2 \cdot n/\delta^3 \le c \cdot \frac{\log \delta}{\delta} \cdot n$. 

So, with probability at least $1/4$, the size of $X$ is at most $24 c \cdot \frac{\log \delta}{\delta} \cdot n$ before $X$ is enriched and we add at most $c \cdot \frac{\log \delta}{\delta} \cdot n$ vertices in $X$ during the second phase.
So there exists a set $X$ of size at most $25 c \log \delta / \delta \cdot n$ such that all the vertices are either in $X$ or have at least $c \log \delta$ neighbours in $X$. 
\end{proof}

Let $c=2k$. By Claim~\ref{clm:goodset}, $G$ admits a subset of vertices $X$ of size $50 k \cdot \log \delta / \delta \cdot n$ such that every vertex $v$ is either in $X$ or has at least $2k \cdot \log \delta$ neighbours in $X$. 
We claim that we can orient the edges between $X$ and $V \setminus X$ to guarantee that all the vertices of $V\setminus X$ have a different neighbourhood in $X$. 
It follows from Lemma~\ref{lem:logDelta} and the fact that $\log \Delta +1 \le 2k \cdot \log \delta$ since
$\log \Delta \le k \log \delta$. 
\end{proof}

Let us complete the results of this section with additional results on regular graphs or based on Lemma~\ref{lem:logDelta}.

A set $S$ is \textit{$k$-dominating} in $G$ if we have for each $v\in V\setminus S$ that $|N(v)\cap S|\geq k.$
Let us denote with $\gamma_k(G)$ the cardinality of a minimum $k$-dominating set of $G$. The following lemma is a simple consequence of Lemma~\ref{lem:logDelta}.

\begin{lemma}\label{lem:kdom}
Let $G$ be a graph with maximum degree $\Delta$. If $k \geq \log \Delta+1$, then $$\old(G)\leq \gamma_k(G).$$
\end{lemma}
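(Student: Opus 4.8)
The plan is to use Lemma~\ref{lem:logDelta} directly. We want to show $\old(G)\leq\gamma_k(G)$ whenever $k\geq\log\Delta+1$. The natural candidate for our set is a minimum $k$-dominating set $X$ of $G$, which by definition has $|X|=\gamma_k(G)$.

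First I would take $X$ to be a minimum $k$-dominating set of $G$. By the definition of $k$-domination, every vertex $v\in V\setminus X$ satisfies $|N(v)\cap X|\geq k$. Since by hypothesis $k\geq\log\Delta+1$, this immediately gives that every vertex not in $X$ has at least $\log\Delta+1$ neighbours in $X$, which is exactly the condition required to apply Lemma~\ref{lem:logDelta}.

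Then I would invoke Lemma~\ref{lem:logDelta}: it guarantees the existence of an orientation $D$ of $G$ for which $X$ is a locating-dominating set. Consequently $\ld(D)\leq|X|=\gamma_k(G)$, and since $\old(G)=\min\{\ld(D)\mid D\text{ is an orientation of }G\}$ is a minimum over all orientations, we conclude $\old(G)\leq\ld(D)\leq\gamma_k(G)$, as desired.

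There is essentially no obstacle here; the statement is a direct corollary, and the only thing to verify is that the numerical hypothesis $k\geq\log\Delta+1$ on the $k$-dominating set matches the neighbourhood requirement in Lemma~\ref{lem:logDelta}, which it does verbatim. The main ``work'' has already been done in establishing Lemma~\ref{lem:logDelta}, so this proof is simply a matter of identifying the right set $X$ and checking that it satisfies the lemma's hypothesis.
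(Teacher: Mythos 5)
Your proof is correct and is exactly the argument the paper intends: the paper presents Lemma~\ref{lem:kdom} as ``a simple consequence of Lemma~\ref{lem:logDelta}'', namely taking a minimum $k$-dominating set $X$, noting that the hypothesis $k\geq\log\Delta+1$ matches the neighbourhood condition of Lemma~\ref{lem:logDelta} verbatim, and concluding $\old(G)\leq|X|=\gamma_k(G)$. No differences to report.
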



By~\cite[Corollary 14]{delavina2010graffiti}, $ \gamma_k(G)\leq n -\alpha(G)$ while $k\leq \delta$. Then, the inequality is an immediate consequence of Lemma~\ref{lem:kdom}.

\begin{corollary}
Let $G$ be a graph with maximum degree $\Delta$ and minimum degree $\delta\geq \log \Delta+1$. Then, $$\old(G)\leq n-\alpha(G).$$
\end{corollary}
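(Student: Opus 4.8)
The plan is to chain Lemma~\ref{lem:kdom} with the cited bound $\gamma_k(G)\leq n-\alpha(G)$ from \cite{delavina2010graffiti}. First I would fix a single integer $k$ that is simultaneously large enough to invoke Lemma~\ref{lem:kdom} and small enough to invoke the cited $k$-domination bound. The natural choice is $k=\lceil\log\Delta+1\rceil$, the smallest integer that is at least $\log\Delta+1$.

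With this choice the hypothesis $k\geq\log\Delta+1$ of Lemma~\ref{lem:kdom} holds by construction, so that lemma immediately gives $\old(G)\leq\gamma_k(G)$. I would then verify the complementary inequality $k\leq\delta$: since $\delta$ is an integer satisfying the assumption $\delta\geq\log\Delta+1$, it is in particular at least the ceiling, i.e. $\delta\geq\lceil\log\Delta+1\rceil=k$. This places $k$ in the admissible range and lets me apply \cite[Corollary 14]{delavina2010graffiti}, which asserts $\gamma_k(G)\leq n-\alpha(G)$ precisely when $k\leq\delta$.

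Combining the two inequalities yields $\old(G)\leq\gamma_k(G)\leq n-\alpha(G)$, as claimed. There is no genuine obstacle here beyond bookkeeping the integrality of $k$: the only thing to check is that the single integer $k=\lceil\log\Delta+1\rceil$ fits into the window $[\log\Delta+1,\delta]$, which is exactly what the hypothesis $\delta\geq\log\Delta+1$ guarantees (together with $\delta\in\mathbb{N}$). The mathematical substance is entirely carried by Lemma~\ref{lem:kdom}, itself a consequence of Lemma~\ref{lem:logDelta}, and by the external $k$-domination bound; the corollary is just the composition of these two facts at the well-chosen value of $k$.
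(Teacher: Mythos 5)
Your proposal is correct and follows essentially the same route as the paper: chaining Lemma~\ref{lem:kdom} with the bound $\gamma_k(G)\leq n-\alpha(G)$ from \cite[Corollary 14]{delavina2010graffiti} for a $k$ in the window $[\log\Delta+1,\delta]$. Your explicit choice $k=\lceil\log\Delta+1\rceil$ and the integrality check merely spell out what the paper's terse two-line argument leaves implicit.
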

A similar result holds for locating-dominating sets, when $G$ is twin-free \cite[Corollary 4.5]{garijoconjecture}. 

\medskip

Let $M$ be a matching in a graph $G$. We say that a vertex $u\in V(G)$ is \textit{$M$-unmatched} if $u$ is not an endpoint of any edge in $M$.

\begin{theorem}\label{bestOrientdReg}
Let $G$ be a $d$-regular graph with $d\geq3$. Then, $$\old(G)\leq \alpha'(G).$$
\end{theorem}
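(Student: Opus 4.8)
The goal is to prove $\old(G) \le \alpha'(G)$ for a $d$-regular graph $G$ with $d \ge 3$, where $\alpha'(G)$ is the maximum matching size. By Lemma~\ref{spanningLemma}, it suffices to exhibit a spanning subgraph $H$ of $G$ with $\ld(H) \le \alpha'(G)$; equivalently, by the orientation viewpoint, I want to produce an orientation of $G$ together with a locating-dominating set of size at most $\alpha'(G)$. Since the bound $n - \alpha'(G)$ from Theorem~\ref{BestDirLDUppBound}(2) is generally much weaker than $\alpha'(G)$ (note $\alpha'(G)$ can be as small as $n/2$ only in the perfect matching case, but here we want the set \emph{itself} to have size $\alpha'(G)$), the plan is to take a maximum matching $M$ as the scaffold for the locating-dominating set.

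The plan is as follows. Let $M$ be a maximum matching of $G$, so $|M| = \alpha'(G)$, and let $C$ be a set containing exactly one endpoint of each edge of $M$, so $|C| = \alpha'(G)$. The set of $M$-unmatched vertices $U = V \setminus V(M)$ forms an independent set (otherwise $M$ could be augmented), and by maximality each vertex of $U$ has all its neighbours in $V(M)$. First I would orient every matching edge so that its tail lies in $C$ and its head is the other (non-$C$) endpoint; this guarantees each matched vertex outside $C$ receives exactly one in-neighbour in $C$, namely its $M$-partner, so these vertices are dominated and pairwise separated by their distinct partners. The delicate vertices are those in $U$: I must orient the remaining edges (those incident to $U$, and the non-matching edges among $V(M)$) so that every $u \in U$ is dominated by $C$ and no two vertices outside $C$ share their in-neighbourhood in $C$. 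Because $G$ is $d$-regular with $d \ge 3$, each $u \in U$ has $d \ge 3$ neighbours in $V(M)$, roughly half of which can be steered to lie in $C$, giving $u$ a rich enough in-neighbourhood in $C$ that I can assign distinct non-empty subsets, in the spirit of Lemma~\ref{lem:logDelta} but with a far weaker degree requirement since here I only need to separate $U$-vertices from one another and from the matched non-$C$ vertices (which already have singleton in-neighbourhoods).

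The key steps, in order, are: (1) fix a maximum matching $M$ and choose $C$ as one endpoint per edge; (2) orient matching edges with tails in $C$, certifying that every non-$C$ matched vertex gets a unique singleton in-neighbourhood; (3) for each $u \in U$, identify its neighbours in $C$ and orient the incident edges toward $u$ from a chosen subset of these, using regularity to ensure the assigned subsets are non-empty and pairwise distinct and distinct from all the singletons used in step (2); (4) orient all remaining edges arbitrarily (away from $C$-vertices when convenient) so as not to disturb the in-neighbourhoods in $C$. The main obstacle is guaranteeing that the in-neighbourhood subsets assigned to the $U$-vertices are simultaneously non-empty, pairwise distinct, and avoid the singletons $\{c\}$ already realized by matched vertices. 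The cleanest way to handle this is a greedy assignment: order the vertices of $U$ and, for each one, pick a non-empty subset of its $C$-neighbourhood not yet used; the counting argument (each $u$ has enough neighbours in $C$ so that the number of available candidate subsets exceeds the number already forbidden) is where the $d \ge 3$ hypothesis is spent, and it must be verified that a valid subset always remains, possibly after arguing that a maximum matching can be chosen so that $C$ contains enough neighbours of each $U$-vertex.
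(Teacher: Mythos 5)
Your scaffold is the same as the paper's (maximum matching $M$, one endpoint per matching edge as the set $C$, matching edges giving distinct singleton in-neighbourhoods to the matched vertices outside $C$, and subsets of $C$-neighbourhoods for the unmatched vertices), but the two steps you defer are exactly where the content of the proof lies, and your proposed greedy counting cannot close them. First, the phrase ``roughly half of which can be steered to lie in $C$'' is not right: the orientation has no influence on $N(u)\cap C$; only the choice of which endpoint of each matching edge enters $C$ does. With an arbitrary choice, an unmatched vertex $u$ may see exactly one endpoint of each of $d$ distinct matching edges while none of those endpoints is chosen, so $N(u)\cap C=\emptyset$ and even domination fails. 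The paper fixes this with a specific selection rule: for $uv\in M$, put into $C$ an endpoint having an unmatched neighbour; this rule is consistent because if $u$ and $v$ had distinct unmatched neighbours $w\neq w'$, then $wuvw'$ would be an augmenting path, contradicting maximality of $M$. Under this rule every unmatched vertex is $2$-dominated by $C$.

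Second, even granting $2$-domination, your greedy assignment of distinct subsets can fail: if two unmatched vertices $w,w'$ satisfy $N(w)\cap C=N(w')\cap C=\{c_1,c_2\}$, the only admissible set of size at least $2$ is $\{c_1,c_2\}$ itself (the singletons $\{c_1\},\{c_2\}$ are already taken by the matching partners of $c_1,c_2$), so no local counting argument can separate $w$ from $w'$. What saves the paper is a global matching-exchange argument, not counting: under the selection rule, if $|N(w)\cap C|<d$ then $w$ has a neighbour $v\notin C$ whose partner $u\in C$ is also adjacent to $w$ (again by an augmenting-path argument), hence $u\in I(w)=I(w')$ forces $w'$ adjacent to $u$, and replacing $uv\in M$ by the two edges $uw'$ and $vw$ enlarges $M$, a contradiction. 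Consequently two unmatched vertices can collide only if they are twins with all $d$ neighbours in $C$, and only for that residual case does a counting argument work: there are $2^d-d-1$ available subsets of size at least $2$ versus at most $d(d-3)/2+2$ competing unmatched vertices, and $2^d-d-1>d(d-3)/2+2$ holds for $d\geq3$ (this, together with the fact that each unmatched vertex meets at least two matching edges, is where $d\geq 3$ is really spent). So your plan needs both the careful construction of $C$ and the exchange arguments; as written, it has a genuine gap at precisely the steps you flagged as ``to be verified.''
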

\begin{proof}
Let $G$ be a $d$-regular graph and $M$ be a maximum matching in $G$. Moreover, let us construct  the set $D_M$ by choosing for each edge $uv\in M$ the vertex $u$ to $D_M$ if only $u$ has an adjacent $M$-unmatched vertex. If neither $u$ or $v$, or both $u$ and $v$ have an adjacent (common) $M$-unmatched vertex, then we arbitrarily add one of them to $D_M$. In the latter case, the $M$-unmatched vertex is common to $u$ and $v$ by the maximality of $M$.

Observe that $D_M$ is a dominating set in $G$ and each $M$-unmatched vertex is $2$-dominated by $D_M$. First of all, each $M$-matched vertex is dominated by another $M$-matched vertex. Secondly, no two $M$-unmatched vertices can be adjacent because $M$ is a maximum matching. Moreover, since $d\geq3$, each $M$-unmatched vertex is adjacent to the endpoints of at least two different edges in $M$. Now, due to the structure of $D_M$, each $M$-unmatched vertex is at least $2$-dominated.

Let us next construct graph $G'$ by removing each edge $e\in E(G)\setminus M$ with both endpoints in $M$-matched vertices. Now, $|I_{G'}(D_M;u)|=1$ for each $M$-matched vertex in $V(G)$ and $|I_{G'}(D_M;v)|\geq2$ for each $M$-unmatched vertex $v$. Thus, $M$-matched vertices have unique $I$-sets in $G'$. 

Let $w$ and $w'$ be two $M$-unmatched vertices with identical $I$-sets. If $2\leq|I(D_M;w)|=|I(D_M;w')|\leq d-1$, then $w$ is adjacent to vertices $u$ and $v$ with $uv\in M$ and, say, $u\in D_M$ and $v\not\in D_M$. Moreover, we also have $u\in N(w')$. But now we could have chosen $uw'$ and $vw$ in our matching $M$ which is a contradiction to the maximality of $M$.

Let us then assume that $|I(D_M;w)|=|I(D_M;w')|=d$ and $I(D_M;w)=I(D_M;w')=\{u_1,\dots u_d\}$. Thus, $w$ and $w'$ are twins. Let us then count the maximum number, $N$, of $M$-unmatched vertices which are adjacent to at least two of vertices in $I(D_M;w)$. Each vertex in $I(D_M;w)$ is adjacent in $G$ to at least one $M$-matched vertex, $u$ and $v$. Hence, there might be at most $d-3$ other adjacent $M$-unmatched vertices. Hence, we have $N\leq d(d-3)/2+2$. Furthermore, there are exactly $2^d-d-1$ subsets of $I(D_M;w)$ of cardinality at least two. Since $d\geq3$, we have $2^d-d-1>d(d-3)/2+2$. Thus, we may go through each $M$-unmatched vertex one by one and if an $M$-unmatched vertex $w$ has an $I$-set identical to some other ($M$-unmatched) vertex, then there exists a set of adjacent edges which can be removed so that $w$ has a unique $I$-set afterwards. Therefore, we may construct a spanning subgraph $G''$ with the property $\ld(G'')\leq \alpha'(G)$. Hence, the claim follows by Lemma \ref{spanningLemma}.\end{proof}

\section{Worst orientation}\label{sec:worst}
We next focus on the worst possible orientation. We again start with basic results. Then, we study the lower bound $\mold(G)\geq \ld(G)/2$ that we prove to be true for several classes of graphs and let it open in general.
Finally, we consider lower bounds using the number of vertices.

\subsection{Basic results}\label{sec:worstbasic}

Let us start by first showing some lower bounds that are used all along the section. The {\em maximum average degree} of a graph $G$, denoted by $mad(G)$ is the maximum quantity $\frac{2 |E(H)|}{|V(H)|}$ over all the subgraphs $H$ of $G$.

\begin{lemma}\label{WorstDirLDLowBound}
Let $G$ be a graph of order $n$. Then, 
\begin{enumerate}
    \item $\mold(G)\geq\alpha(G)$,
    \item $\mold(G)\geq \lceil \omega(G)/2 \rceil$,
    \item $\mold(G)\geq 2n/\lceil \text{mad}(G)/2+3\rceil$.
\end{enumerate}
\end{lemma}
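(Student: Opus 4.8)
The plan is to prove each of the three lower bounds by exhibiting, for the appropriate structure, a specific orientation $D$ of $G$ for which $\ld(D)$ is provably large, and then invoke the definition $\mold(G)=\max\{\ld(D)\mid D \text{ is an orientation of }G\}$. Since $\mold(G)$ is a maximum over all orientations, it suffices in each case to construct one good orientation and bound $\ld(D)$ from below on that orientation.

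For part $(1)$, I would take a maximum independent set $A$ with $|A|=\alpha(G)$. The idea is to orient all edges so that every vertex of $A$ becomes a source, i.e. orient each edge incident to a vertex of $A$ away from $A$ (edges with both endpoints in $A$ do not exist since $A$ is independent, and edges inside $V\setminus A$ are oriented arbitrarily). In this orientation $D$, every vertex $a\in A$ has $N^-_D(a)=\emptyset$. In a locating-dominating set, every vertex not in $S$ must have a non-empty in-neighbourhood in $S$; hence every source must belong to $S$. Therefore $A\subseteq S$ for every locating-dominating set $S$ of $D$, which gives $\ld(D)\geq\alpha(G)$ and thus $\mold(G)\geq\alpha(G)$.

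For part $(2)$, let $K$ be a maximum clique, $|K|=\omega(G)$. I would orient the edges inside $K$ to form a transitive tournament on $K$ (and orient all remaining edges of $G$, those with at least one endpoint outside $K$, so as not to create new in-neighbours inside $K$ from outside, e.g. orient every edge between $K$ and $V\setminus K$ into $V\setminus K$). Restricted to $K$ this is a transitive tournament, and by Theorem~\ref{thm:tournament} its directed location-domination number is $\lceil\omega(G)/2\rceil$. The main point to check is that locating-dominating within $K$ cannot be helped by vertices outside $K$, which holds because the edges between $K$ and the outside are oriented away from $K$, so the in-neighbourhoods of vertices of $K$ lie entirely inside $K$; hence any locating-dominating set of $D$ must place at least $\lceil\omega(G)/2\rceil$ vertices to separate and dominate the vertices of $K$, giving $\mold(G)\geq\lceil\omega(G)/2\rceil$.

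For part $(3)$, the plan is to combine an averaging/counting bound with the $\text{mad}$ hypothesis, paralleling the share argument of Lemma~\ref{DirLDLowBound} but now choosing the orientation to make it work in the \emph{worst} direction. I would pick a subgraph $H$ realising $\text{mad}(G)=2|E(H)|/|V(H)|$, orient $H$ (and extend to $G$) so that in-degrees are as balanced as possible, and argue that any locating-dominating set must be large because each vertex of $S$ can ``account for'' only boundedly many vertices outside $S$ given the bounded in-degrees controlled by $\text{mad}$. Concretely, if $S$ is a locating-dominating set of the chosen orientation $D$, a share/discharging argument (one unit of share per vertex, redistributed along in-arcs) shows $|S|\geq 2n/\lceil \text{mad}(G)/2+3\rceil$, the ceiling reflecting that in-degrees are integers bounded via the average degree. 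The hard part will be part $(3)$: one must choose the orientation carefully so that the bound on in-degrees holds \emph{simultaneously} with the locating-domination constraint, and handle the rounding to obtain the ceiling cleanly; parts $(1)$ and $(2)$ are comparatively direct once the right sourcing/transitive orientation is chosen.
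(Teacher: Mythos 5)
Parts (1) and (2) of your proposal are correct and essentially identical to the paper's own proof: forcing every vertex of a maximum independent set to be a source forces all of them into any locating-dominating set, and orienting a maximum clique $K$ transitively with all edges leaving $K$ forces $S\cap V(K)$ to be a locating-dominating set of a transitive tournament, so Theorem~\ref{thm:tournament} gives $|S|\geq\lceil\omega(G)/2\rceil$.

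Part (3), however, has a genuine gap. First, the degree you must control is the \emph{out}-degree, not the in-degree: in the share argument each vertex $v\notin S$ sends $1/|I(v)|$ to its in-neighbours in $S$, so the share accumulated at $c\in S$ arrives from the \emph{out}-neighbours of $c$, and the key estimate $s(c)\le 2+(\Delta^+(D)-1)/2$ (one unit for $c$ itself, one for the at most one out-neighbour whose $I$-set is exactly $\{c\}$, and at most $1/2$ for every other out-neighbour) is a bound in terms of $d^+(c)$. If only in-degrees are bounded, $s(c)$ is unbounded (think of a source of huge out-degree) and the counting collapses; the repair is easy (reverse all arcs), but as written this step fails. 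Second, and more importantly, the crux of the proof is the existence of an orientation of the \emph{whole} graph with $\Delta^+(D)\le\lceil \mathrm{mad}(G)/2\rceil$, which is a theorem of Hakimi \cite{hakimi1965degrees}; you neither cite nor prove it, and your plan of orienting a densest subgraph $H$ and then ``extending'' cannot replace it, since $H$ only witnesses the value of $\mathrm{mad}(G)$ while the out-degree bound must hold globally in $G$. Finally, your worry that the degree bound must hold ``simultaneously'' with the locating-domination constraint is a misunderstanding: the inequality $\ld(D)\ge 2n/(\Delta^+(D)+3)$ holds for \emph{every} orientation $D$ and every locating-dominating set of it, so one simply fixes the Hakimi orientation $D'$ in advance and concludes $\mold(G)\ge\ld(D')\ge 2n/(\lceil\mathrm{mad}(G)/2\rceil+3)$; no coordination between the orientation and the set $S$ is needed.
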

\begin{proof}
Let $G$ be a graph on $n$ vertices.
Point $1$ has already been noticed for the worst orientation for dominating sets (see \cite{caro2012directed}) and thus, is still true for locating-dominating sets. We repeat here the argument. Take an independent set $X$ of size $\alpha(G)$ and orient all the edges with an endpoint in $X$ from $X$ to $V\setminus X$. Then, all the vertices in $X$ are sources and thus, must be in any locating dominating set.

Let us next prove the second point. Let $K_m$ be a clique of $G$. Consider an orientation $D$ such that each edge is oriented away from $K_m$ and the edges inside $K_m$ are oriented in a transitive way. In a locating-dominating set $S$ of $D$, no vertices outside $K_m$ can be in the in-neighbourhoods of the vertices of $K_m$. Thus, $S$ must induce a locating-dominating set in $K_m$. Since $K_m$ is oriented in a transitive way, by \cite{foucaud2020domination}, we necessarily have at least $\lceil m/2 \rceil$ vertices in $V(K_m)\cap S$ and so in $S$.  

Let us prove the last point. To do so, let us show that $\ld(D) \ge 2n / (\Delta^+(D)+3)$ for any orientation $D$ of $G$. Let $C$ be a locating-dominating set of $D$. For each vertex $c\in C$, let $s(c)=\sum_{v\in N^+[c]}1/|N^-[v]|.$ Since $C$ is dominating in $D$, we have $\sum_{c\in C}s(c)=n$. Moreover, for any $c\in C$, at most two vertices in $N^+[c]$ have only $c$ in their $I$-sets (at most one vertex outside $c$ and maybe $c$). Thus, $s(c)\leq 2+(\Delta^+(D)-1)/2$. Now, $$n=\sum_{c\in C}s(c)\leq |C|\frac{3+\Delta^+(D)}{2}.$$ Hence, $|C|\geq 2n/(3+\Delta^+(D)).$ So Point $3$ follows since each graph has an orientation $D'$ such that $\Delta^+(D')\leq \lceil \text{mad}(G)/2\rceil$ by \cite{hakimi1965degrees}.\end{proof}


Observe that all the bounds are tight. Indeed, we see in Corollary \ref{bipartiteMold} that for some bipartite graphs $\mold(G)=\alpha(G)$. Moreover, for a complete graph $K_n$, we have $\mold(K_n)=\lceil n/2\rceil$, by Corollary~\ref{cor:complete}. Finally, we will see (Corollary \ref{moldCycle}) that for a cycle on $n$ vertices we have $\mold(C_n)=\lceil n/2\rceil$.

We now present three general upper bounds for $\mold(G)$. We denote by $ad(G)$  the average degree of $G$ and by $\alpha_2(G)$ the maximum size of an independent set at $2$-distance, that is a set of vertices such that any two vertices of the set are at distance greater than 2.

\begin{lemma}\label{WorstDirLDUppBound}
Let $G$ be a graph of order $n$. Then, 
\begin{enumerate}
    \item $\mold(G)\leq n-\alpha_2(G)$;
    \item $\mold(G)\leq n -\left\lfloor\frac{\omega(G)}{2}\right\rfloor$;
    \item $\mold(G)\leq n - \left\lfloor\frac{n}{2n-2ad(G)}\right\rfloor$. 
\end{enumerate}
\end{lemma}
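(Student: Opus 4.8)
The plan is to prove all three bounds by the same scheme: for an \emph{arbitrary} orientation $D$ of $G$, I would exhibit a set $Y$ of vertices that can be deleted while $S=V\setminus Y$ remains a locating-dominating set of $D$. This gives $\ld(D)\le n-|Y|$, and since $D$ is arbitrary it bounds $\mold(G)$. On top of this I would reduce Point~3 to Point~2 via Turán's theorem, and Point~2 to the tournament bound of Theorem~\ref{thm:tournament}. Throughout I assume $G$ has no isolated vertex (an isolated vertex belongs to every locating-dominating set and must be excluded here, since it would break Point~1).

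For Point~1, I would fix a maximum set $X=\{x_1,\dots,x_k\}$ of pairwise distance-$>2$ vertices, so $k=\alpha_2(G)$. The structural facts I would rely on are that the closed neighbourhoods $N[x_1],\dots,N[x_k]$ are pairwise disjoint and that no $x_i$ is adjacent to any vertex of $N[x_j]$ for $j\ne i$ (an edge $x_ix_j$ or a common neighbour would witness distance $\le 2$). Given $D$, I build $Y$ by choosing one vertex $y_i$ in each block $N[x_i]$: if $x_i$ is not a source I take $y_i=x_i$, whose in-neighbourhood lies in its own block and hence stays in $S$; if $x_i$ is a source I instead take $y_i$ to be an out-neighbour of $x_i$, so that $x_i\in N^-(y_i)$ and I keep $x_i$ in $S$. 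Then $|Y|=k=\alpha_2(G)$ and every deleted vertex is in-dominated by $S$. Separation of $y_i,y_j$ I would check in two cases: if $x_i,x_j$ are both non-sources, their in-neighbourhoods lie in disjoint blocks and are nonempty, hence distinct; otherwise a source among $x_i,x_j$, kept in $S$, is an in-neighbour of exactly one of $y_i,y_j$ (it cannot be adjacent to the block of the other), and thus separates them. This yields $\mold(G)\le n-\alpha_2(G)$.

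For Point~2, I would take a maximum clique $K$ with $|K|=\omega(G)$. For any orientation $D$, the arcs inside $K$ form a tournament, which by Theorem~\ref{thm:tournament} admits a locating-dominating set $S_K$ with $|S_K|\le\lceil\omega/2\rceil$. Setting $Y=K\setminus S_K$ gives $|Y|\ge\lfloor\omega/2\rfloor$, and I claim $S=(V\setminus K)\cup S_K$ is locating-dominating in $D$. Domination of each $y\in Y$ is inherited from $S_K$ inside the tournament. For separation I would intersect in-neighbourhoods with $K$: for $y\in Y$ one has $N^-_D(y)\cap S\cap K=N^-(y)\cap S_K$, and this already distinguishes distinct elements of $Y$ because $S_K$ locates them in the tournament; so equality of the full in-neighbourhoods in $S$ is impossible. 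This gives $\mold(G)\le n-\lfloor\omega(G)/2\rfloor$.

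Finally, Point~3 would follow by combining Point~2 with Turán's theorem: a graph with clique number $\omega$ is $K_{\omega+1}$-free, hence has at most $(1-\tfrac1\omega)\tfrac{n^2}{2}$ edges, which rearranges (using $ad(G)=2|E|/n$) to $\omega(G)\ge \frac{n}{n-ad(G)}$; by monotonicity of the floor, $\lfloor\omega/2\rfloor\ge\big\lfloor \frac{n}{2n-2\,ad(G)}\big\rfloor$, and Point~2 concludes. I expect the main obstacle to be Point~1: forcing the deletion count to equal $\alpha_2(G)$ means deleting an out-neighbour whenever the chosen block vertex is a source, and the delicate point is verifying that these displaced deletions stay pairwise separated — which is precisely where the distance-$>2$ hypothesis (disjoint closed neighbourhoods and absence of cross-adjacencies to the $x_i$) is essential.
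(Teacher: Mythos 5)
Your proposal is correct and takes essentially the same route as the paper: the same delete-one-vertex-per-closed-neighbourhood construction for Point~1 (the paper keys on vertices with no out-neighbour rather than on sources, a symmetric variant of your case split), the identical maximum-clique/tournament argument for Point~2 via Theorem~\ref{thm:tournament}, and Point~3 deduced from Point~2 by lower-bounding $\omega(G)\ge n/(n-ad(G))$, which you obtain from Tur\'an's theorem while the paper applies the Caro--Wei bound to $\overline{G}$ --- an equivalent computation. Your explicit caveat that Point~1 requires $G$ to have no isolated vertices is a legitimate observation that the paper's statement and proof silently gloss over.
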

\begin{proof}
Let $G$ be a graph on $n$ vertices and $D$ be an orientation of $G$ such that $\ld(D)=\mold(G)$. Let $S$ be a maximum independent set at $2$-distance in $G$. Observe that, for any two distinct vertices $u,v\in S$, we have $N[u]\cap N[v]=\emptyset.$ Let us construct set $S'$ by adding, for each vertex $u\in S$, either $u$ to $S'$ if $u$ has no out-neighbours in $D$ or an out-neighbour of $u$ if $u$ has one in $D$. Now, one can easily check that $C=V\setminus S'$ is a locating-dominating set of $G$ of size $n-\alpha_2(G)$.

Let us next prove $2.$ Let $K$ be a maximal clique in $G$ and let $C_K$ be an optimal locating-dominating set in $D[K]$. Now, $C=C_K\cup (V(G)\setminus K)$ is a locating-dominating set of $D$. Furthermore, $|C_K|\leq\lceil \omega(G)/2\rceil$ by Theorem~\ref{thm:tournament} and hence, the claim follows.

Let us finally prove the third bound. We have $$\mold(G)\leq n - \lfloor\omega(G)/2\rfloor =n - \lfloor\alpha(\overline{G})/2\rfloor\leq n - \lfloor n/(2ad(\overline{G})+2)\rfloor=n - \lfloor n/(2n-2ad(G))\rfloor.$$
Here the second inequality is due to Caro-Wei lower bound for independence number \cite{caro1979new, wei1981lower}  and the last equality is due to equality $ad(G)+ad(\overline{G})=n-1$.
\end{proof}

All these bounds are tight: the first bound is tight for stars and the two others for complete graphs. 


We still have $\mold(G)\leq n-1$ as soon as $G$ has at least one edge. As in the case of $\old(G)$, we can characterize the set of graphs reaching $\mold(G)=n-1$ using Theorem \ref{Digraphn-1}.

\begin{lemma}
For a connected graph $G$, $\mold(G)=n-1$ if and only if at least one of the following conditions holds:
\begin{enumerate}
    \item $n=3$;
    \item $G$ is a star;
    \item $G$ consists of a complete bipartite graph and possibly a single universal vertex.
\end{enumerate}
\end{lemma}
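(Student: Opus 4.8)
The plan is to characterize the graphs $G$ with $\mold(G)=n-1$ by leveraging Theorem~\ref{Digraphn-1}, exactly as was done for the lower parameter in Corollary~\ref{extremalOLD}. The key structural difference is that $\mold(G)=n-1$ requires only the \emph{existence} of one orientation $D$ with $\ld(D)=n-1$, rather than this holding for every orientation. So I would first verify the easy (sufficiency) direction: if $n=3$, if $G$ is a star, or if $G$ is a complete bipartite graph with possibly one extra universal vertex, then I exhibit an orientation $D$ matching one of the three conditions of Theorem~\ref{Digraphn-1}. For stars this is Corollary~\ref{cor:star}. For a complete bipartite graph $K_{a,b}$ with parts $A,B$, I would orient all edges from $A$ to $B$, so that $A$ is a source-side independent set and $B$ a sink-side independent set; this fits condition~$3$ of Theorem~\ref{Digraphn-1} with $S_1=A$, $S_2=B$, and $C=\emptyset$. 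If there is an additional universal vertex $z$, I would place it in the bi-directed clique $C$ (so $|C|=1$), orient all arcs from $A$ into $C\cup B$ and from $C$ into $B$; this again realizes condition~$3$.

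The harder (necessity) direction is the main obstacle. Here I assume $\mold(G)=n-1$, fix an orientation $D$ achieving $\ld(D)=n-1$, and apply Theorem~\ref{Digraphn-1} to $D$. If $n=3$ we are done, and if $D$ is a directed star then $G$ is a star. Otherwise $D$ decomposes as $S_1\cup C\cup S_2$ per condition~$3$. Since $D$ is an oriented (simple) graph, the bi-directed clique $C$ can have at most one vertex, so $|C|\le 1$. I would then translate the arc structure back to the underlying graph $G$. When $C=\emptyset$, the edges present are exactly those between $S_1$ and $S_2$ (all arcs from $S_1$ to $S_2$), so $G$ is the complete bipartite graph on parts $S_1,S_2$. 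When $|C|=1$, writing $C=\{z\}$, the vertex $z$ is adjacent in $G$ to every vertex of $S_1\cup S_2$ (arcs from $S_1$ into $z$ and from $z$ into $S_2$), so $z$ is universal, and the remaining edges are precisely those between $S_1$ and $S_2$, giving the complete bipartite graph plus a universal vertex.

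The delicate points I expect to encounter are the degenerate cases of the partition, which must be handled to keep $G$ connected and to avoid collapsing into a star or disconnecting. If $S_1$ or $S_2$ is empty, the bipartite structure degenerates: for instance, with $C=\{z\}$ and $S_2=\emptyset$, the graph becomes a star centered at $z$, which is already covered by condition~$2$; with $C=\emptyset$ and one part empty, $G$ has no edges or is disconnected, contradicting connectivity for $n\ge 2$. I would carefully enumerate these boundary subcases to show each either reduces to a listed condition or is impossible under the connectivity hypothesis. Unlike Corollary~\ref{extremalOLD}, I do \emph{not} need to argue that some \emph{alternative} orientation lowers the LD-number below $n-1$ (that argument was what forced $G$ to be a star in the lower-bound case); for $\mold$, one good orientation suffices, which is exactly why the admissible family here is strictly larger, encompassing all complete bipartite graphs and their one-point universal extensions.
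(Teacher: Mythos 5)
Your proposal is correct and follows essentially the same route as the paper: apply Theorem~\ref{Digraphn-1} to an orientation achieving $\ld(D)=n-1$, observe that since $D$ is an oriented (simple) graph the bi-directed clique $C$ has size at most one, and translate the resulting structure back to the underlying graph. The paper compresses all of this into three sentences, so your more careful treatment of the sufficiency orientations and the degenerate cases ($S_1$ or $S_2$ empty) simply fills in details the paper leaves implicit.
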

\begin{proof}
By Theorem \ref{Digraphn-1}, we have $\mold(G)=n-1$ if $n=3$ or $G$ is a star. Moreover, since we consider oriented graphs, the third condition of  Theorem \ref{Digraphn-1} implies that $C$ must be of size one. Thus, the claim follows.
\end{proof}

Cycles on four vertices have a special role for best orientations. It is also the case for worst orientations, as illustrated by the following results.

\begin{lemma}\label{c4free edge removal}
Let $G$ be a graph without $C_4$ as a subgraph. Then, $\mold(G)\leq \mold(G-e)$ for any edge $e\in E(G)$.
\end{lemma}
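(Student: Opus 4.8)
The plan is to show that an optimal orientation of $G-e$ can be extended to an orientation of $G$ without decreasing the directed location-domination number, thereby yielding $\mold(G)\leq\mold(G-e)$. Let $e=xy$ and let $D'$ be an orientation of $G-e$ with $\ld(D')=\mold(G-e)$. First I would orient the missing edge $e$ in some direction, say from $x$ to $y$, to obtain an orientation $D$ of the full graph $G$, and then argue that $\ld(D)\leq\ld(D')$, which is what we need since $\mold(G)\geq\ld(D)$ would go the wrong way — so in fact the clean approach is to start from the worst orientation $D$ of $G$ and produce from it an orientation of $G-e$ that is at least as hard.

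Concretely, I would let $D$ be an orientation of $G$ achieving $\ld(D)=\mold(G)$, and let $D'$ be the orientation of $G-e$ obtained by simply deleting the arc corresponding to $e$. The key claim is that any locating-dominating set $S$ of $D'$ is still a locating-dominating set of $D$, which would give $\ld(D)\leq\ld(D')\leq\mold(G-e)$ and finish the proof. To prove the claim I would take a locating-dominating set $S$ of $D'$ and suppose, for contradiction, that $S$ fails to be locating-dominating in $D$. Since adding back the arc of $e$ only enlarges in-neighbourhoods, $S$ remains dominating, so the only possible failure is that two vertices $u,v\notin S$ satisfy $I_D(u)=I_D(v)$ while $I_{D'}(u)\neq I_{D'}(v)$. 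Re-inserting a single arc changes exactly one in-neighbourhood set (the head of $e$), so this collision must involve the head of $e$; I would track how the single added in-neighbour can create an equality $I_D(u)=I_D(v)$ that was an inequality in $D'$.

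The heart of the argument is the $C_4$-free hypothesis, exactly as in the proof of Theorem~\ref{c4freeBestLD}. If the added arc of $e$ makes $I_D(u)=I_D(v)$, then $u$ and $v$ now share two common in-neighbours (their common set had to already have size at least $2$ for the parallel argument, or one newly created common neighbour together with an existing one), and two vertices sharing two common neighbours in the underlying graph produce a $C_4$ as a subgraph, contradicting that $G$ is without $C_4$. I would mirror the case analysis of Theorem~\ref{c4freeBestLD}: rule out the case $|I_D(u)|=1$ directly (a single shared in-neighbour forces $I_{D'}(u)=I_{D'}(v)$ already, contradicting that $S$ separates them in $D'$), and in the case $|I_D(u)|\geq 2$ extract two common neighbours $c_1,c_2$ in the underlying graph, so that $u,c_1,v,c_2$ span a $C_4$.

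The main obstacle I anticipate is bookkeeping the effect of re-inserting the single arc on the equality of in-neighbourhoods: I must be careful that only the head vertex of $e$ has its in-neighbourhood altered, and that any new collision therefore forces a genuine second common neighbour in the \emph{underlying} graph (not merely in the orientation), so that the forbidden $C_4$ really appears as a subgraph of $G$. Once that reduction to two common neighbours is established, the $C_4$-freeness yields the contradiction immediately, exactly as in Theorem~\ref{c4freeBestLD}.
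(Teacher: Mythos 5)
Your proposal is correct and follows essentially the same route as the paper: both take a worst orientation $D$ of $G$ with $\ld(D)=\mold(G)$, delete the arc corresponding to $e$, and show that a locating-dominating set of $D-e$ remains locating-dominating in $D$, since a failure of separation would force two common in-neighbours $c_1,c_2\in S$ of two unseparated vertices $u,v$, producing the forbidden $C_4$ as a subgraph of $G$. The only cosmetic difference is that the paper packages this as a contradiction on the values ($\mold(G-e)<\mold(G)$) applied to an optimal set of $D-e$, whereas you prove directly that every locating-dominating set of $D-e$ works in $D$; the key step, including ruling out $|I_D(u)|=1$ via domination in $D-e$, is identical.
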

\begin{proof}
Let $G$ be a graph without $C_4$ and with at least one edge. Let $D$ be an orientation such that $\ld(D)=\mold(G)$. By contradiction, assume that $\mold(G-e)< \mold(G)$. Then, we have $\ld(D-e)<\ld(D)$.

Let $S$ be an optimal locating-dominating set in $D-e$. Since  $\ld(D-e)<\ld(D)$, $S$ cannot be a locating-dominating set in $D$. Because $S$ is dominating in $D-e$, $S$ is also dominating in $D$. Thus, there are vertices $u,v\in V(G)$ such that $I_D(v)=I_D(u)$. Moreover, we have $|I_D(v)|=|I_D(u)|\geq2$. Let $\{c_1,c_2\}\subseteq I_D(v)$. But now we have a cycle on four vertices $u,c_1,v$ and $c_2$.
\end{proof}

Note that Lemma \ref{c4free edge removal} does not hold for $C_4$. We have $\mold(C_4)=3$ and $\mold(P_4)=2$. The bounds in the following lemma are tight for example for stars.

\begin{lemma}\label{Mold from matching}
Let $G$ be a graph without $C_4$ as a subgraph. Then, $$\ld(G)\leq\mold(G)\leq n-\alpha'(G).$$
\end{lemma}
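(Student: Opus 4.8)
The plan is to prove the two inequalities separately, both exploiting the $C_4$-freeness through the same mechanism that underlies Theorem~\ref{c4freeBestLD}: in a graph without $C_4$, two distinct vertices cannot have two common neighbours.

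For the lower bound $\ld(G)\le\mold(G)$, I would reuse the argument from the proof of Theorem~\ref{c4freeBestLD}. That argument shows that when $G$ has no $C_4$, any locating-dominating set $S$ of any orientation $D$ of $G$ is already a locating-dominating set of the underlying graph $G$: indeed $S$ is dominating in $G$, and if some $u,v\notin S$ satisfied $I_G(u)=I_G(v)$, then $|I_G(u)|\ge|I_D(u)|\ge 1$ forces $|I_G(u)|\ge 2$ (otherwise $I_D(u)=I_D(v)$), and two common neighbours together with $u$ and $v$ would span a $C_4$. Taking $S$ to be a minimum locating-dominating set of $D$, it is then an undirected locating-dominating set of $G$, so $\ld(D)=|S|\ge\ld(G)$ for every orientation $D$; maximising over $D$ gives $\mold(G)=\max_D\ld(D)\ge\ld(G)$.

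For the upper bound $\mold(G)\le n-\alpha'(G)$, I would fix an \emph{arbitrary} orientation $D$ of $G$ and a maximum matching $M$ with $|M|=\alpha'(G)$, and exhibit a locating-dominating set of $D$ of size $n-\alpha'(G)$. For each edge of $M$ the orientation $D$ singles out a tail and a head; I would let $S$ be $V$ minus the head of each matching edge, so $|S|=n-\alpha'(G)$. Domination is immediate: each excluded vertex $w$ is the head of some $M$-edge $tw$, hence its tail $t\in S$ lies in $I_D(w)$, which is therefore non-empty. For the locating property, suppose two distinct excluded heads $w_1,w_2$, with tails $t_1,t_2\in S$, satisfied $I_D(w_1)=I_D(w_2)$. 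Then $t_1\in I_D(w_1)=I_D(w_2)$ and $t_2\in I_D(w_2)=I_D(w_1)$, so $t_1w_2$ and $t_2w_1$ are edges of $G$, and together with the matching edges $w_1t_1$ and $w_2t_2$ they form a closed walk $w_1t_1w_2t_2w_1$.

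The final step is to verify that $w_1,t_1,w_2,t_2$ are pairwise distinct, turning this walk into a genuine $C_4$: $w_1\ne w_2$ and $t_1\ne t_2$ because the two matching edges are distinct, while no head can equal a tail since every head is excluded from $S$ and every tail lies in $S$. This $C_4$ contradicts the hypothesis, so all excluded vertices have pairwise distinct non-empty in-neighbourhoods, $S$ is a locating-dominating set of $D$, and $\ld(D)\le n-\alpha'(G)$. Since $D$ was arbitrary, $\mold(G)\le n-\alpha'(G)$. I expect no real obstacle here; the only point demanding care is the distinctness check in the last step, as everything else reduces to a direct application of the $C_4$-free property.
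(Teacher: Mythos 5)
Your proof is correct, and for the upper bound it takes a genuinely different route from the paper's. The lower bound is the same in both: the paper simply invokes Theorem~\ref{c4freeBestLD} (whose proof is exactly the argument you reproduce) together with the trivial inequality $\mold(G)\geq\old(G)$. For the upper bound, the paper first establishes a deletion-monotonicity lemma (Lemma~\ref{c4free edge removal}): in a graph without $C_4$, removing any edge cannot decrease $\mold$. It then deletes every edge outside a maximum matching $M$, leaving a disjoint union of $P_2$'s and isolated vertices, for which every orientation clearly needs exactly the isolated vertices plus one endpoint per $P_2$, i.e.\ $n-\alpha'(G)$ vertices. You instead work on $G$ itself: for an arbitrary orientation $D$ you exhibit the witness set $S$ consisting of $V$ minus the heads of the $M$-edges, and verify location directly, noting that two excluded heads $w_1,w_2$ with $I_D(w_1)=I_D(w_2)$ would force the edges $t_1w_2$ and $t_2w_1$ and hence the closed walk $w_1t_1w_2t_2w_1$; your distinctness check (heads lie outside $S$, tails inside $S$, and $M$ is a matching) is the right one and turns this into a genuine $C_4$, contradicting the hypothesis. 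Both arguments rest on the same mechanism --- in a $C_4$-free graph two vertices cannot share two neighbours --- and indeed your set $S$ coincides with the set the paper implicitly uses in the reduced graph; the difference is that your verification is self-contained and inlines the $C_4$ argument, whereas the paper's detour through Lemma~\ref{c4free edge removal} buys a reusable deletion tool that it exploits again elsewhere (e.g.\ to get $\mold(C_n)\leq\mold(P_n)$ in Corollary~\ref{moldCycle}).
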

\begin{proof}
Let $G=(V,E)$ be a graph without $C_4$ as a subgraph.  The lower bound follows from Theorem~\ref{c4freeBestLD}. Let us prove the upper bound.
Let $M$ be a maximum matching in $G$ and $G'$ be a graph we get from $G$ by removing each edge not belonging to $M$. By Lemma \ref{c4free edge removal}, we have $\mold(G)\leq \mold(G')$. Moreover, the graph $G'$ consists of isolated vertices and components isomorphic to $P_2$. Thus, a set $S$ consisting of isolated vertices and a single vertex for each $P_2$-component is locating-dominating in $G'$ and $\mold(G')=\ld(G')=|S|\leq n-\alpha'(G)$.
\end{proof}

Together with some classical results of König and Gallai, Lemma~\ref{Mold from matching} permits to determine the exact  value of $\mold(G)$ for bipartite graphs without $C_4$ (which  in particular include all trees).

\begin{corollary}\label{bipartiteMold}
Let $G$ be a bipartite graph without $C_4$ as a subgraph. Then, $\mold(G)= \alpha(G)$.
\end{corollary}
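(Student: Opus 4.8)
The plan is to combine the upper bound from Lemma~\ref{Mold from matching} with the two classical identities that govern matchings and independence in bipartite graphs. Specifically, Lemma~\ref{Mold from matching} already gives us $\mold(G)\leq n-\alpha'(G)$ for any $C_4$-free graph, so the only task is to show that $n-\alpha'(G)=\alpha(G)$ when $G$ is bipartite and, simultaneously, to produce a matching lower bound $\mold(G)\geq\alpha(G)$.

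First I would invoke the Gallai identity, which states that for any graph without isolated vertices $\alpha(G)+\beta(G)=n$, where $\beta(G)$ is the minimum edge cover number; and then König's theorem, which for bipartite graphs gives $\alpha'(G)=\beta(G)$ (the maximum matching equals the minimum edge cover in graphs with no isolated vertex, via König's min-max theorem). Chaining these, $n-\alpha'(G)=n-\beta(G)=\alpha(G)$, so the upper bound of Lemma~\ref{Mold from matching} becomes exactly $\mold(G)\leq\alpha(G)$. (If $G$ has isolated vertices, they belong to every independent set and to every locating-dominating set, so one can treat them separately or note the identities extend with the convention that isolated vertices are handled trivially.) For the reverse inequality, I would simply quote Lemma~\ref{WorstDirLDLowBound}(1), which asserts $\mold(G)\geq\alpha(G)$ for every graph by orienting all edges incident to a maximum independent set outward, forcing every independent vertex to be a source and hence to lie in every locating-dominating set. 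Combining the two inequalities yields $\mold(G)=\alpha(G)$.

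The main subtlety to watch is the correct statement of König's theorem and Gallai's identity: König's theorem in its standard form equates the maximum matching with the minimum vertex cover in bipartite graphs, so I would route through Gallai's two identities, $\alpha(G)+\tau(G)=n$ (independence plus vertex cover) and $\alpha'(G)+\beta(G)=n$ (matching plus edge cover, valid when there are no isolated vertices), together with König's $\tau(G)=\alpha'(G)$ for bipartite $G$. These give $\alpha(G)=n-\tau(G)=n-\alpha'(G)$ directly, which is the cleanest path and avoids the edge-cover version entirely. I expect no genuine obstacle here — the result is essentially a one-line corollary — so the only care needed is to cite the right classical theorems and to confirm that the $C_4$-free hypothesis is exactly what Lemma~\ref{Mold from matching} requires, which it is, since trees and $C_4$-free bipartite graphs are precisely the setting of that lemma.

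Thus the proof is: by Lemma~\ref{Mold from matching}, $\mold(G)\leq n-\alpha'(G)$; by König's theorem and the Gallai identity, $n-\alpha'(G)=n-\tau(G)=\alpha(G)$, whence $\mold(G)\leq\alpha(G)$; and by Lemma~\ref{WorstDirLDLowBound}(1), $\mold(G)\geq\alpha(G)$, giving equality.
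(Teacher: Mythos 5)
Your proposal is correct and follows essentially the same route as the paper: the lower bound from Lemma~\ref{WorstDirLDLowBound}(1), the upper bound $\mold(G)\leq n-\alpha'(G)$ from Lemma~\ref{Mold from matching}, and the identity $n-\alpha'(G)=\alpha(G)$ via K\H{o}nig and Gallai. Note that the paper's ``$\beta(G)$'' (which it calls the edge cover number) is defined as the minimum number of vertices meeting every edge, i.e.\ the vertex cover number $\tau(G)$ in your notation, so the paper's chain $\alpha'(G)=\beta(G)$, $\alpha(G)+\beta(G)=n$ is literally the ``cleanest path'' you settled on.
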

\begin{proof}
Let $G$ be a bipartite graph without $C_4$. We have $\mold(G)\geq\alpha(G)$ by Lemma~\ref{WorstDirLDLowBound}. By \cite{konig1931graphen}, we have $\alpha'(G)=\beta(G)$ since $G$ is bipartite. Moreover, by \cite{gallai1959uber}, we have  $\alpha(G)+\beta(G)=n.$ Hence, $\alpha(G)=n-\alpha'(G)$. Now, we have, by Lemma \ref{Mold from matching}, $\mold(G)\leq n-\alpha'(G)=\alpha(G)$. Thus, $\alpha(G)\leq \mold(G)\leq \alpha(G)$.
\end{proof}

\begin{corollary}\label{moldCycle}
Let $C_n$ be a cycle on $n$ vertices. Let $n=3$ or $n \geq 5$. Then, $$\mold(C_n)=\left\lceil \frac{n}{2}\right\rceil.$$
\end{corollary}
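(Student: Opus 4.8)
The plan is to establish the two inequalities $\mold(C_n)\geq\lceil n/2\rceil$ and $\mold(C_n)\leq\lceil n/2\rceil$ separately, using a single explicit orientation for the lower bound and the matching bound of Lemma~\ref{Mold from matching} for the upper bound.

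For the lower bound I would take $D$ to be the consistently oriented (directed) cycle $\vec{C}_n$, in which every vertex has exactly one in-neighbour, namely its predecessor along the cycle. The key observation is that in $\vec{C}_n$ the separation condition is automatic: for any two distinct vertices $u,v\notin S$ the sets $I(u)=N^-(u)\cap S$ and $I(v)=N^-(v)\cap S$ are singletons involving the distinct predecessors of $u$ and $v$, so they can only coincide by both being empty. Hence $S$ is locating-dominating in $\vec{C}_n$ if and only if every vertex outside $S$ has its predecessor in $S$, i.e. $V\setminus S$ contains no two cyclically consecutive vertices. Thus $V\setminus S$ must be an independent set of $C_n$, whose maximum size is $\lfloor n/2\rfloor$, giving $\ld(\vec{C}_n)=n-\lfloor n/2\rfloor=\lceil n/2\rceil$ and therefore $\mold(C_n)\geq\lceil n/2\rceil$. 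Note this construction works uniformly for all $n\geq 3$, so the parity of $n$ causes no trouble here; in particular it avoids relying only on $\mold(G)\geq\alpha(G)$ from Lemma~\ref{WorstDirLDLowBound}, which gives only $\alpha(C_n)=\lfloor n/2\rfloor$ and is too weak when $n$ is odd.

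For the upper bound I would invoke Lemma~\ref{Mold from matching}. The hypotheses $n=3$ or $n\geq 5$ are exactly what guarantee that $C_n$ contains no $C_4$ as a subgraph (the only cycle containing $C_4$ as a subgraph is $C_4$ itself). Lemma~\ref{Mold from matching} then yields $\mold(C_n)\leq n-\alpha'(C_n)$, and since a maximum matching of a cycle has size $\alpha'(C_n)=\lfloor n/2\rfloor$, we get $n-\lfloor n/2\rfloor=\lceil n/2\rceil$. Combining the two bounds gives $\mold(C_n)=\lceil n/2\rceil$.

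The only delicate point, and the reason $C_4$ must be excluded, lies in the upper bound: for $n=4$ the graph does contain $C_4$, Lemma~\ref{Mold from matching} does not apply, and indeed $\mold(C_4)=3>\lceil 4/2\rceil$. Everything else is routine; the main care needed is in the lower-bound step, where one must verify that the separation condition in the directed cycle is automatic and that domination alone then forces $V\setminus S$ to be an independent set.
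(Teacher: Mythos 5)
Your proof is correct and follows essentially the same route as the paper: the cyclic orientation forces $\lceil n/2\rceil$ vertices (lower bound), and the $C_4$-free matching machinery gives the upper bound. The only cosmetic difference is that you apply Lemma~\ref{Mold from matching} directly to $C_n$, whereas the paper first deletes an edge via Lemma~\ref{c4free edge removal} and then applies Corollary~\ref{bipartiteMold} to $P_n$ --- both rest on the same underlying lemma, and your detailed verification that domination alone forces $V\setminus S$ to be independent in the directed cycle is a careful elaboration of what the paper states in one line.
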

\begin{proof}
By Lemma \ref{c4free edge removal}, we have $\mold(C_n)\leq \mold(P_n)$ by Corollary \ref{bipartiteMold} applied to $P_n$ (where $\alpha(P_n)=\left\lceil \frac{n}{2}\right\rceil$). Moreover, if we take a cyclic orientation of $C_n$, the set of vertices with an odd index number forms an optimal locating-dominating set.
\end{proof}

Observe that, for the path on $n$ vertices, $P_n$, we have $\ld(P_n)=\lceil 2n/5\rceil$ for paths~\cite{slater1988dominating} while we have $\mold(P_n)=\alpha(P_n)=\lceil n/2\rceil$. As we mentioned above, there exist graphs without $C_4$ with $\mold(G)>\ld(G)$. However, we are not aware of any graph $G$ without $C_4$ which does not attain the upper bound of Lemma \ref{Mold from matching}. 

\begin{problem}
 Does there exist a graph $G$ without $C_4$ as a subgraph with $\mold(G)<n-\alpha'(G)$?
\end{problem}

\subsection{Lower bound with $\ld(G)$}\label{SecLowerboundLD}

In Section~\ref{sec:worstbasic}, we have seen that $\mold(G)\geq\ld(G)$ if $G$ is without $C_4$ subgraphs. 
One can easily remark that this equality does not hold in general. For example, for complete graphs we have $\ld(K_n)=n-1$ and $\mold(K_n)=\lceil n/2\rceil$ by Corollary~\ref{cor:complete}. However the clique example is somehow unsatisfactory since all the vertices are twins. One can wonder if we can also provide an example of twin-free graphs where $\mold(G)< \ld(G)$. We will prove (Theorem~\ref{gamma/2 example}) that there are graphs for which $\mold(G)/ \ld(G)$ is arbitrarily close to $1/2$. Moreover, we strengthen the result that $\mold(G)\geq\ld(G)$ on graphs without $C_4$ to a wider class of graphs (Lemma~\ref{lem:wormcoloring}).

Despite our efforts, we were not able to find graphs for which $\mold(G)< \ld(G)/2$. We leave  as an open problem the following question:
\begin{problem}
Is it true that for every graph $G$, $\mold(G)\geq\ld(G)/2$?
\end{problem}
We were actually not able to prove the existence of any constant $c$ such that, for any graph $G$, $\mold(G)\geq c \cdot \ld(G)$. However, in the following theorem we present a bound with $\Delta(G)$.

 \begin{theorem}
 Let $G$ be a graph. Then, $$\mold(G)\geq \frac{\ld(G)}{\lceil\log_2\Delta(G)\rceil+1}.$$
 \end{theorem}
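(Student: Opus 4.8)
The plan is to prove the contrapositive-style bound directly: starting from \emph{any} orientation $D$ achieving a large directed location-domination number, I would show how to convert an optimal locating-dominating set of $D$ into a (undirected) locating-dominating set of $G$ whose size is blown up by at most a factor of $\lceil \log_2 \Delta(G)\rceil + 1$. Concretely, I would fix an orientation $D$ with $\ld(D)=\mold(G)$ and let $S$ be an optimal locating-dominating set of $D$, so $|S|=\mold(G)$. The goal is then to produce an \emph{undirected} locating-dominating set $S'$ of $G$ with $|S'|\le (\lceil\log_2\Delta(G)\rceil+1)\cdot|S|$, which immediately yields $\ld(G)\le(\lceil\log_2\Delta(G)\rceil+1)\cdot\mold(G)$ and hence the theorem.

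The key mechanism is that an in-neighbourhood code in $D$ carries less information than an undirected neighbourhood code, so I must \emph{augment} $S$ to recover locating power. The natural approach is to observe that in $D$ the vertices of $V\setminus S$ are separated by their in-neighbourhoods $I_D(S;u)\subseteq S$, each of which is a non-empty subset of $S$. When we forget orientations, two vertices $u,v$ that were separated in $D$ may collapse to the same undirected neighbourhood on $S$. To repair this, I would group the vertices of $V\setminus S$ according to their undirected neighbourhood $I_G(S;u)=N_G(u)\cap S$, and within each group use the extra structure to add few vertices. The factor $\lceil\log_2\Delta(G)\rceil$ strongly suggests a binary-encoding argument: within a group of vertices sharing the same undirected neighbourhood on $S$, one selects an auxiliary set of size $O(\log(\text{group size}))$ to distinguish them, and since each vertex of $S$ dominates at most $\Delta(G)$ vertices, the relevant group sizes are bounded by $\Delta(G)$, giving the $\log_2\Delta(G)$ term.

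First I would set up the partition of $V\setminus S$ into classes of equal undirected $S$-neighbourhood; each such class is already pairwise-distinguished by the \emph{directed} code, so within a class the in-neighbourhoods give an injective labelling into subsets of a common neighbourhood set of size at most $\Delta(G)$. Then I would, for each class, add a small separating set: because the class has at most $\Delta(G)$ vertices and they have a common undirected neighbourhood of bounded size, a set of $\lceil\log_2\Delta(G)\rceil$ well-chosen vertices suffices to separate them undirected-wise (this is the standard ``a family of $m$ distinct subsets can be distinguished by $\lceil\log_2 m\rceil$ test elements'' idea). Accounting carefully, the total number of added vertices is at most $\lceil\log_2\Delta(G)\rceil$ times something controlled by $|S|$, and together with the original $S$ this gives the claimed multiplicative bound.

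The hard part will be the bookkeeping that keeps the blow-up factor down to exactly $\lceil\log_2\Delta(G)\rceil+1$ rather than a larger constant multiple. In particular, one must argue that the auxiliary separating vertices can be charged to $S$ (or to the classes) so that their total number is at most $\lceil\log_2\Delta(G)\rceil\cdot|S|$, and one must handle the subtlety that adding a vertex to $S'$ both increases the code length and may itself need to be separated. I expect the cleanest route is to choose the auxiliary vertices from within the common neighbourhoods already ``touched'' by $S$, so that they are automatically dominated, and to use a counting argument tying the number of nonempty undirected-neighbourhood classes to $|S|$ via the degree bound $\Delta(G)$. Verifying that the resulting $S'$ is simultaneously dominating and locating in the undirected graph $G$ is then routine, and the size bound follows from the charging argument.
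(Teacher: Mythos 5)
Your proof breaks at its central step. The ``standard'' fact you invoke --- that $m$ distinct subsets can be told apart by $\lceil\log_2 m\rceil$ test elements --- holds only when each test may be an \emph{arbitrary} bipartition of the $m$ objects; here a test is a vertex of $G$, and the bipartition it induces on a class is dictated by adjacency. Inside a class $S^I=\{v\notin S\mid I_G(S;v)=I\}$, no vertex of $S$ separates anything: the vertices of $I$ are adjacent to every member of the class and the vertices of $S\setminus I$ to none of them, so your idea of drawing the auxiliary separators ``from within the common neighbourhoods already touched by $S$'' selects exactly the vertices that are useless. Separators must come from outside $S$, where neither their existence nor their domination is under control. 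Moreover, distinctness of the \emph{directed} codes says nothing about undirected separability: the members of $S^I$ may be pairwise twins of $G$ (each with neighbourhood exactly $I$, their in-neighbourhoods in $D$ being distinct subsets of $I$), in which case no vertex whatsoever separates them and any locating-dominating set of $G$ must contain all but one of them --- a per-class cost that can reach $\min(2^{|I|},\Delta(G))-2$, not $\lceil\log_2\Delta(G)\rceil$; even without twins, constrained tests can force $|S^I|-1$ separators (e.g.\ when each member has one private neighbour outside $S$). Finally, the charging you yourself flag as ``the hard part'' is not merely hard but impossible in this local form: $S$ dominates up to $\Delta(G)\cdot|S|$ vertices, so the number of classes with at least two members can be of order $\Delta(G)\cdot|S|$, and since each such class needs at least one added vertex, a class-by-class account blows past the budget $\lceil\log_2\Delta(G)\rceil\cdot|S|$. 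Nothing in your argument (which never uses that $D$ is a worst orientation beyond $|S|=\mold(G)$) rules these situations out; only an argument in which one added vertex is paid for globally can succeed.

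The paper's proof supplies precisely this global mechanism, and its source of payment is the definition of $\mold(G)$ itself rather than any bookkeeping. One pairs up the vertices inside each class $S^I$, chooses a linear order of $V(G)$ in which each pair is consecutive, orients every edge forward along this order to obtain a new orientation $D_1$, and takes an optimal locating-dominating set $S_1$ of $D_1$; then $|S_1|\le\mold(G)$ automatically, since $\mold$ is a maximum over all orientations. Because the two vertices of a pair are consecutive, every other vertex precedes both or follows both, so either one of the pair lies in $S_1$, or some vertex of $S_1$ separates them in $D_1$ and is therefore adjacent in $G$ to exactly one of them, i.e.\ separates them undirectedly; when the pair are twins of $G$ the first alternative is forced, which is how twins are absorbed. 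Thus one round costs at most $\mold(G)$ and resolves one pair in \emph{every} class simultaneously, at least halving each class; since $|S^I|\le\Delta(G)$, after $t=\lceil\log_2\Delta(G)\rceil$ rounds the set $S\cup S_1\cup\dots\cup S_t$ is locating-dominating in $G$, giving $\ld(G)\le(\lceil\log_2\Delta(G)\rceil+1)\mold(G)$. Your opening reduction (fix $D$ with $\ld(D)=\mold(G)$ and augment its optimal set $S$) matches the paper's, but the augmentation has to come from repeated re-orientation of $G$, not from per-class separating systems.
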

\begin{proof}
Let $D$ be an orientation of $G$ such that $\mold(G)=\ld(D)$. Moreover, let $S$ be an optimal locating-dominating set in $D$. Observe, that for each subset $I$ of $S$, the set $$S^I=\{v\in V(G)\setminus S\mid I_G(S;v)=I\}$$ contains at most $\Delta(G)$ vertices: $|S^I|\leq \Delta(G)$. Let us next construct a new orientation $D_1$ by first taking for each set $S^I$, $\lfloor|S^I|/2\rfloor$ disjoint vertex pairs within the set $S^I$, that is, as many disjoint vertex pairs as possible. Then, we number each vertex of $V(G)$ as $u_i$, $1\leq i\leq |V(G)|$ so that each pair has consecutive numbers. Finally we orient each edge from $u_i$ to $u_j$ where $i<j$.

Let $S_1$ be an optimal locating-dominating set  for orientation $D_1$. Notice that $|S_1|\leq\mold(G)$. Moreover, $S'_1=S\cup S_1$ is a locating-dominating set in $D$ and $D_1$. Furthermore, $S_1$ separates each paired pair of vertices in $D_1$. Thus, if for a pair $u_i,u_{i+1}$, vertex $x$ separates $u_i$ and $u_{i+1}$ in $D_1$, then either $x=u_{i+1}$ or it separates also $u_i$ and $u_{i+1}$ in $G$.
Moreover, for each $I'\subseteq S'_1$ such that  $I=I'\cap S$, we have that $S_1'^{I'}\subseteq S^I$. Since $S_1$ separates the pairs in $G$, we have that $|S_1'^{I'}|\leq\lfloor|S^I|/2\rfloor\leq \lfloor\Delta(G)/2\rfloor$. 

If we now iterate this process $\lceil\log_2(\Delta(G))\rceil$ times, each time creating a new orientation with a new numbering and a new optimal locating-dominating set for the orientation, then we finally get set $S'_t=S\cup\bigcup_{i=1}^tS_i$, where $t=\lceil\log_2(\Delta(G))\rceil$, with $|S'_t|\leq \lceil\log_2(\Delta(G))+1\rceil\mold(G)$. Moreover, because we  (almost) halve the number of vertices with the same $I$-set in $G$ each time, no vertices in $V\setminus S'_t$ share the same $I$-set with the set $S'_t$ in $G$. Thus, $S'_t$ is locating-dominating in $G$ and $\ld(G)\leq|S'_t|\leq \lceil\log_2(\Delta(G))+1\rceil\mold(G).$
\end{proof}

\subsubsection{Graphs for which $\mold(G)\geq\ld(G)$}

\begin{lemma}\label{no 4-paths}
Let $G$ be a graph and $D$ be an orientation of $G$ such that no $C_4$ in $G$ contains a directed path of length $4$ in $D$. Then, any locating-dominating set of $D$ is a locating-dominating set of $G$. In particular, $\mold(G)\geq\ld(G)$.
\end{lemma}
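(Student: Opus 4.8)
The plan is to argue by contradiction, reusing the mechanism behind Theorem~\ref{c4freeBestLD} and Lemma~\ref{c4free edge removal}: a failure of the locating property in $G$ forces two non-solution vertices to share their undirected $S$-neighbourhood, and once the relevant edges are oriented this common-neighbour configuration is seen to realize a long directed path inside a $4$-cycle. Throughout, the key elementary fact is that an in-neighbour in $D$ is a neighbour in $G$, so $I_D(S;w)\subseteq I_G(S;w)$ for every $w\notin S$.

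Concretely, I would fix a locating-dominating set $S$ of $D$. It is dominating in $G$ for free, since $I_D(S;w)\neq\emptyset$ gives $I_G(S;w)\neq\emptyset$. Hence the only possible obstruction is a pair of distinct vertices $u,v\notin S$ with $A:=I_G(S;u)=I_G(S;v)$. Because $I_D(S;u)$ and $I_D(S;v)$ are non-empty subsets of $A$ that differ (as $S$ locates in $D$), I would first note $|A|\ge 2$: if $|A|=1$ both $D$-in-neighbourhoods would equal the unique element of $A$, a contradiction.

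The core step is a very short case analysis on how the edges joining $u,v$ to $A$ are oriented. Relabelling $u,v$ if necessary, choose $a\in I_D(S;u)\setminus I_D(S;v)$; then $a\to u$, and since $a\in A$ is a neighbour of $v$ but $a\notin I_D(S;v)$, we have $v\to a$. Choose any $b\in I_D(S;v)$ (non-empty since $S$ dominates $D$), so $b\to v$; as $b\in A$ is a neighbour of $u$, the edge $ub$ is oriented either $b\to u$ or $u\to b$. Here $a\neq b$ (as $a\notin I_D(S;v)\ni b$), and $a,b\in A\subseteq S$ while $u,v\notin S$, so $u,a,v,b$ are four distinct vertices; moreover the edges $ua,av,vb,bu$ all exist because $a,b$ are common neighbours of $u$ and $v$, so these four vertices span a $C_4$ of $G$. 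If $b\to u$ the arcs yield the directed path $b\to v\to a\to u$; if $u\to b$ they yield $a\to u\to b\to v$. In either case this is a directed path on all four vertices of a $C_4$ of $G$, i.e. a directed path of length $4$, contradicting the hypothesis. Therefore $S$ is locating-dominating in $G$.

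For the ``in particular'' clause I would apply this to a \emph{minimum} locating-dominating set of $D$: being locating-dominating in $G$, it witnesses $\ld(G)\le\ld(D)$, and since $\mold(G)\ge\ld(D)$ by definition, we get $\mold(G)\ge\ld(G)$ for any orientation $D$ satisfying the hypothesis (in particular, whenever $G$ is $C_4$-free the hypothesis holds vacuously for every orientation). The step I expect to need the most care is the bookkeeping in the core paragraph — checking that $u,a,v,b$ are genuinely distinct and induce, as a (not necessarily induced) subgraph, a $C_4$; everything else is a direct translation of ``equal $S$-neighbourhoods'' into the forbidden directed $P_4$.
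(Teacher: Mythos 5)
Your proof is correct and takes essentially the same route as the paper: both argue by contradiction from a pair $u,v\notin S$ with $I_G(S;u)=I_G(S;v)$, pick one in-neighbour that separates them in $D$ and one that dominates the other vertex, and exhibit a directed path on all four vertices of the resulting $C_4$. The only difference is cosmetic: your case analysis on the orientation of $ub$ is redundant, since the arcs $b\to v$, $v\to a$, $a\to u$ already form the forbidden path no matter how $ub$ is oriented, which is exactly the paper's one-line conclusion.
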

\begin{proof}
Let $G$ be a graph and $D$ be an orientation of $G$ such that no $C_4$ in $G$ contains a directed path of length $4$ in $D$. Let $S$ be locating-dominating in $D$. Let us assume that $S$ is not locating-dominating in $G$. Set $S$ is clearly dominating in $G$. Let $v,u\in V\setminus S$ be vertices with $I_G(v)=I_G(u)$. Since $I_D(v)\neq I_D(u)$, we have $|I_G(v)|\geq2$. Let us assume that $c_1\in I_D(v)\setminus I_D(u)$ and $c_2\in I_D(u)$. But now we have a directed path $c_2uc_1v$, a contradiction.
\end{proof}

Let $M$ and $R$ be subgraphs of $G$. An \textit{$(M, R)$-WORM colouring} \cite{goddard2016vertex} of graph $G$, is a colouring of the vertices of $G$ where no subgraph of $G$ isomorphic to $M$ is monochromatic and no subgraph of $G$ isomorphic to $R$ is heterochromatic (i.e. has all its vertices of different colours). The following lemma gives us a tool for applying Lemma \ref{no 4-paths}.

\begin{lemma}\label{lem:wormcoloring}
If $G$ admits a $(K_2,C_4)$-WORM colouring, then $\mold(G)\geq\ld(G)$.
\end{lemma}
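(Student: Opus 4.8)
The plan is to use the WORM colouring to build one concrete orientation $D$ to which Lemma~\ref{no 4-paths} applies. Recall that a $(K_2,C_4)$-WORM colouring is a colouring in which no copy of $K_2$ (i.e.\ no edge) is monochromatic — so it is a \emph{proper} colouring — and no $C_4$ is heterochromatic, i.e.\ every $4$-cycle of $G$ has two vertices of the same colour. These are exactly the two features I intend to exploit: properness will make a colour-ordered orientation well defined, and the absence of rainbow $C_4$'s will forbid directed paths around $4$-cycles.

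First I would fix such a colouring $c\colon V(G)\to\{1,\dots,k\}$ and orient $G$ according to the linear order on the colour set: orient each edge $uv$ from $u$ to $v$ whenever $c(u)<c(v)$. This is well defined precisely because the colouring is proper, so $c(u)\neq c(v)$ for every edge $uv$. Call the resulting orientation $D$ (note that it is in fact acyclic, although only a weaker property is needed below).

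The key step is to verify that $D$ satisfies the hypothesis of Lemma~\ref{no 4-paths}, namely that no $C_4$ of $G$ contains a directed path of length $4$ in $D$. Suppose for contradiction that some $4$-cycle on vertices $a,b,c,d$ of $G$ carries a directed path on all four of its vertices. Since every arc of $D$ points from the smaller colour to the larger colour, the colours are strictly increasing along any directed path; hence the four vertices of this $C_4$ receive four strictly ordered, and in particular pairwise distinct, colours. But then this $C_4$ is heterochromatic, contradicting the WORM property. There are only four ways to select three consecutive edges of the $4$-cycle as a path on its four vertices, and each leads to the same monotonicity contradiction, so the case analysis is immediate.

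Finally I would invoke Lemma~\ref{no 4-paths}: since $D$ has no $C_4$ containing a directed path of length $4$, every locating-dominating set of $D$ is a locating-dominating set of $G$, and therefore $\ld(G)\le\ld(D)\le\mold(G)$, which is the claim. I do not expect a genuine obstacle here; the only points requiring care are the correct reading of the two WORM conditions (proper colouring versus no rainbow $C_4$) and the observation that a colour-monotone orientation cannot route a directed $P_4$ through a $4$-cycle without forcing that cycle to be rainbow.
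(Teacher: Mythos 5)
Your proposal is correct and follows essentially the same route as the paper: orient each edge from the smaller to the larger colour (well defined by properness of the WORM colouring), observe that colours strictly increase along directed paths so a directed path through all four vertices of a $C_4$ would force that cycle to be heterochromatic, and then apply Lemma~\ref{no 4-paths}. The only cosmetic difference is that you spell out the well-definedness and acyclicity remarks slightly more explicitly than the paper does.
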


\begin{proof}
Let $G$ be a graph which admits a $(K_2,C_4)$-WORM colouring $c$ using colours $\{1,\dots,k\}$. Let $D$ be the orientation such that we have an edge from $u$ to $v$ if $c(u)<c(v)$. Since $c$ is a $(K_2,C_4)$-WORM colouring, it defines an orientation for any edge and $\mold(G)\geq\ld(D)$. Hence, it is enough to show that $\ld(D)\geq\ld(G)$.

We claim that no $C_4$ in $D$ contains a directed path of length 4. Indeed, if there is a directed path $u_1u_2u_3u_4$, then $c(u_1)<c(u_2)<c(u_3)<c(u_4)$ and if this path is contained in a $C_4$, then this $C_4$ is  heterochromatic, a contradiction. Then, the claim follows from Lemma \ref{no 4-paths}.
\end{proof}

Observe that any proper colouring with at most three colours is also a $(K_2,C_4)$-WORM colouring. Hence, we get the following corollary (where $\chi(G)$ denotes the chromatic number of $G$).

\begin{corollary}\label{WorstLD3colour}
Let $G$ be a graph with $\chi(G)\leq3$. Then, $\mold(G)\geq\ld(G)$.
\end{corollary}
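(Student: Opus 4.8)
The plan is to reduce Corollary~\ref{WorstLD3colour} directly to Lemma~\ref{lem:wormcoloring}, so the entire argument hinges on a single observation: that any proper colouring using at most three colours is automatically a $(K_2,C_4)$-WORM colouring. First I would recall the two defining conditions of a $(K_2,C_4)$-WORM colouring, namely that no copy of $K_2$ is monochromatic and no copy of $C_4$ is heterochromatic (all four vertices receiving distinct colours).

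The first condition is immediate for any proper colouring: by definition a proper colouring assigns distinct colours to adjacent vertices, so no edge (no copy of $K_2$) can be monochromatic. This holds regardless of the number of colours and uses nothing beyond the definition of properness. The second condition is where the hypothesis $\chi(G)\le 3$ enters. Since a proper $3$-colouring uses at most three colours in total, any four vertices must contain a repeated colour by pigeonhole; in particular the four vertices of any $C_4$ cannot all receive distinct colours, so no $C_4$ is heterochromatic. Thus a proper colouring with at most three colours satisfies both WORM conditions.

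Having established that $G$ admits a $(K_2,C_4)$-WORM colouring whenever $\chi(G)\le 3$ (take any proper colouring witnessing $\chi(G)\le 3$), I would simply invoke Lemma~\ref{lem:wormcoloring} to conclude $\mold(G)\geq\ld(G)$. There is no real obstacle here: the whole corollary is a one-line consequence of the WORM-colouring lemma once the pigeonhole observation about $C_4$ is in place. If anything, the only point deserving a moment's care is confirming that three colours genuinely suffice to block heterochromatic $C_4$'s — which it does precisely because a $C_4$ has four vertices, strictly more than the number of available colours, forcing a repetition.
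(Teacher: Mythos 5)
Your proof is correct and is exactly the paper's argument: the paper derives this corollary from Lemma~\ref{lem:wormcoloring} by the same observation that a proper colouring is never monochromatic on a $K_2$, while three colours cannot make a $C_4$ heterochromatic by pigeonhole. Nothing is missing.
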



\subsubsection{Worst examples}

The following theorem ensures that there exist  examples of twin-free graphs where we almost reach the ratio $\frac{1}{2}$ for $\mold(G)/\ld(G)$.

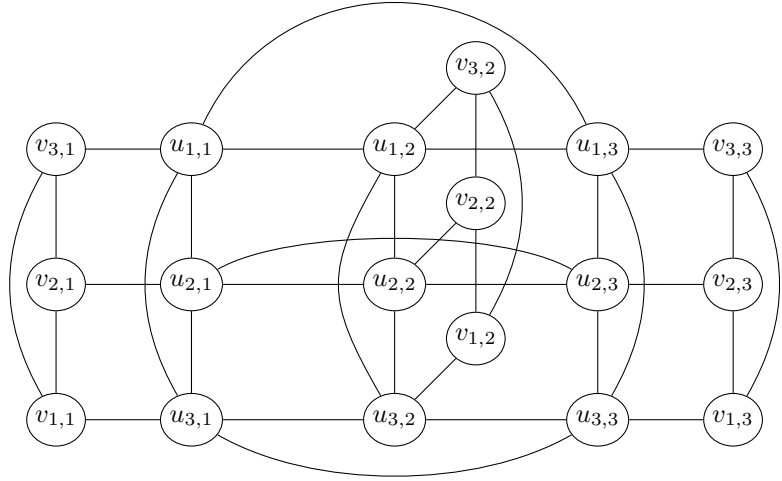
\begin{figure}[h]
    \centering
    \begin{tikzpicture}[scale=1.8]

\tikzset{circle/.style={draw,ellipse,minimum size=7mm,inner sep=0pt}}

\node[circle] (11) at (0.5,1) {$u_{3,1}$};
\node[circle] (12) at (0.5,2) {$u_{2,1}$};
\node[circle] (13) at (0.5,3) {$u_{1,1}$};
\node[circle] (21) at (2,1) {$u_{3,2}$};
\node[circle] (22) at (2,2) {$u_{2,2}$};
\node[circle] (23) at (2,3) {$u_{1,2}$};
\node[circle] (31) at (3.5,1) {$u_{3,3}$};
\node[circle] (32) at (3.5,2) {$u_{2,3}$};
\node[circle] (33) at (3.5,3) {$u_{1,3}$};

\draw (11) to (12);
\draw (11) to (21);
\draw (11) to (12);
\draw (12) to (13);
\draw (12) to (22);
\draw (13) to (23);
\draw (21) to (22);
\draw (21) to (31);
\draw (22) to (23);
\draw (22) to (32);
\draw (23) to (33);
\draw (31) to (32);
\draw (32) to (33);

\draw (11) to[bend left] (13);
\draw (21) to[bend left, looseness=1.3] (23);
\draw (31) to[bend right] (33);

\draw (11) to[bend right, looseness=0.8] (31);
\draw (12) to[bend left, looseness=0.6] (32);
\draw (13) to[bend left=65, looseness=1.2] (33);

\node[circle] (v11) at (-0.5,1) {$v_{1,1}$};
\node[circle] (v12) at (-0.5,2) {$v_{2,1}$};
\node[circle] (v13) at (-0.5,3) {$v_{3,1}$};
\node[circle] (v21) at (2.6,1.6) {$v_{1,2}$};
\node[circle] (v22) at (2.6,2.6) {$v_{2,2}$};
\node[circle] (v23) at (2.6,3.6) {$v_{3,2}$};
\node[circle] (v31) at (4.5,1) {$v_{1,3}$};
\node[circle] (v32) at (4.5,2) {$v_{2,3}$};
\node[circle] (v33) at (4.5,3) {$v_{3,3}$};

\draw (11) to (v11);
\draw (21) to (v21);
\draw (31) to (v31);
\draw (12) to (v12);
\draw (22) to (v22);
\draw (32) to (v32);
\draw (13) to (v13);
\draw (23) to (v23);
\draw (33) to (v33);

\draw (v11) to (v12);
\draw (v12) to (v13);
\draw (v21) to (v22);
\draw (v22) to (v23);
\draw (v31) to (v32);
\draw (v32) to (v33);

\draw (v11) to[bend left] (v13);
\draw (v21) to[bend right] (v23);
\draw (v31) to[bend right] (v33);

\end{tikzpicture}
    \caption{Example of graph $G$ of Theorem \ref{gamma/2 example} with $t=k=3$.}
    \label{FIGHalfExample}
\end{figure}

\begin{theorem}\label{gamma/2 example}
There exists an infinite family of twin-free graphs $G$ such that $$\frac{\mold(G)}{\ld(G)}\overset{n\to\infty}{\longrightarrow}\frac{1}{2}.$$
\end{theorem}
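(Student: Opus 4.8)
The family is the graph $G=G_{t,k}$ of Figure~\ref{FIGHalfExample}, and I would take $t=k\to\infty$, so that $n=2tk$. The guiding principle is that the $k$ disjoint cliques $V_j=\{v_{1,j},\dots,v_{t,j}\}$ play the role that $K_m$ plays in the clique example $\mold(K_m)=\lceil m/2\rceil$, $\ld(K_m)=m-1$: as an \emph{undirected} clique each $V_j$ forces about $t$ vertices (its members are almost twins), while once oriented it becomes a tournament and forces only $\lceil t/2\rceil$ by Theorem~\ref{thm:tournament}. The remaining part of $G$ — the rook's graph carried by the $u_{i,j}$ (each row and each column of $u$'s is a clique) glued to the cliques $V_j$ by a perfect matching inside every column — is there only to break the twins of the $V_j$, and the point will be that it is cheap to locate \emph{both} undirected and oriented. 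Thus the plan is to establish $\ld(G)=(1-o(1))\,n/2$ and $\mold(G)=(1+o(1))\,n/4$, whence the ratio tends to $1/2$.

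First I would compute $\ld(G)$. For the lower bound, take any locating-dominating set $S$ and look at a single column: for two members of $V_j$ lying outside $S$ the only possible separators are the vertices of $V_j\cap S$ (common to all of them) and their own matched $u$-vertices, so at most one $v\in V_j\setminus S$ may have its match outside $S$; writing $U_j$ for the $j$-th column of $u$'s this gives $|S\cap(V_j\cup U_j)|\ge t-1$, and summing the disjoint bounds over the $k$ columns yields $\ld(G)\ge k(t-1)$. For the matching upper bound I would exhibit one explicit set: keep all $v$'s except one per column and add the match of each discarded $v$, a set of size $kt$; every $u$ outside it is pinned down by its private (in-$S$) matched $v$, and each discarded $v$ by the rest of its clique. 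Hence $\ld(G)\in[k(t-1),kt]$, which is $(1-o(1))\,kt=(1-o(1))\,n/2$ as $t\to\infty$.

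Next $\mold(G)$. The lower bound $\mold(G)\ge k\lceil t/2\rceil$ comes from orienting every $V_j$ as a transitive tournament and every matching edge from $v$ to $u$: then the in-neighbours of a vertex of $V_j$ stay inside $V_j$, so $S\cap V_j$ must itself be a locating-dominating set of the transitive tournament on $t$ vertices, which has size $\lceil t/2\rceil$ by Theorem~\ref{thm:tournament}. For the matching upper bound I would show that \emph{every} orientation $D$ has $\ld(D)\le k\lceil t/2\rceil+o(n)$: put in $S$ an optimal code of each tournament $D[V_j]$ (total at most $k\lceil t/2\rceil\le n/4+o(n)$, and this already locates all $v$'s, since codes of distinct columns live in disjoint vertex sets), together with a locating-dominating set of the \emph{oriented} rook's graph induced on the $u_{i,j}$. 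The engine of the whole argument is that this last quantity is $o(tk)$: for every orientation the $u$-part can be located with only $O((t+k)\log(tk))$ vertices, by taking in each column and each row of the rook a directed dominating set of its tournament (of logarithmic size) and identifying a cell through the row- and column-coordinates these provide. Combining, $\mold(G)=(1+o(1))\,n/4$ and therefore $\mold(G)/\ld(G)\to 1/2$.

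The main obstacle is exactly this upper bound on $\mold(G)$, and it has two delicate points. The first is the bound $o(tk)$ on the location-domination number of an \emph{arbitrarily} oriented rook's graph: this is what keeps the twin-breaking $u$-part from contributing a constant fraction of $n$, and it is essential that the $u$-columns are tied into a rook rather than left as $k$ separate cliques — were they separate, each would cost $\approx t/2$ after orientation and the ratio would degrade all the way to $1$. The second is the gluing: one must check that the tournament codes of the $V_j$ and the code of the rook do not create a cross collision $I(u)=I(v)$ and that adversarially oriented matching edges can only help (indeed, whichever way a matching edge is oriented, one of its endpoints receives the other as an in-neighbour), so that all the resulting repairs, together with the $\lfloor\cdot\rfloor$/$\lceil\cdot\rceil$ and the $o(n)$ terms, are harmless as $t=k\to\infty$. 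Twin-freeness of $G$ itself is routine: $u$- and $v$-vertices have different degrees, two $v$'s of a column differ in their matches, and any two $u$'s differ in a row, a column, or a match.
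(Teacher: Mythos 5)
Your construction and overall strategy coincide with the paper's: the same graph (Figure~\ref{FIGHalfExample}), the same lower bound $k(t-1)$ on $\ld(G)$ via the observation that at most one matched pair per $v$-clique can avoid the code, the same orientation (transitive $v$-cliques, matching edges oriented from $v$ to $u$) giving $\mold(G)\ge k\lceil t/2\rceil$, and the same plan for the upper bound on $\mold(G)$: optimal tournament codes in the $v$-cliques plus a cheap locating structure on the oriented rook's graph. So the comparison reduces to the one step you yourself single out as the engine of the argument, and there your sketch has a genuine gap.

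You claim that taking a directed dominating set (of logarithmic size) in each row tournament and each column tournament of the rook's graph locates every cell ``through the row- and column-coordinates these provide''. As written this fails, because the coordinates are ambiguous: a dominating-set vertex chosen for a column also lies in some row and can dominate along that row. Concretely, take cells $u_{i,j}$ and $u_{i',j'}$ with $i\ne i'$ and $j\ne j'$, and suppose the union of all the chosen dominating sets meets $N^-(u_{i,j})\cup N^-(u_{i',j'})$ exactly in $\{u_{i,j'},u_{i',j}\}$, with arcs $u_{i,j'}\to u_{i,j}$, $u_{i',j}\to u_{i,j}$, $u_{i,j'}\to u_{i',j'}$ and $u_{i',j}\to u_{i',j'}$; these are four distinct vertex pairs, so no orientation conflict arises, and since the dominating sets are only logarithmic nothing forces any further in-neighbours. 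Then $I(u_{i,j})=I(u_{i',j'})=\{u_{i,j'},u_{i',j}\}$ and the two cells are not separated, so your claimed bound on the $u$-part is unproven. The paper's proof is built exactly to avoid this: it takes \emph{$2$-dominating} sets $S_j$ in the column tournaments and ordinary dominating sets $S'_i$ in the row tournaments (both still of logarithmic size). Then a cell of column $j$ has at least two in-neighbours in $S_j$, while any cell of a different column has at most one vertex of $S_j$ as an in-neighbour (only via its own row), so cross-column pairs are separated by counting inside a single $S_j$; same-column pairs are then separated by their row dominators; and every $u$-vertex ends up $3$-dominated by $u$-vertices, which also disposes of the $I(u)=I(v)$ cross-collisions you flag, since a $v$-vertex has at most one $u$-in-neighbour (its match). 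With this asymmetric replacement your bound $k\lceil t/2\rceil+O((t+k)\log(tk))$ and the rest of your argument go through; without it, the central claim is false as stated.
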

\begin{proof}
Let $k,t\geq 2$ be integers and let $H_{k,t}$ be the graph with vertex set 
$$V(H_{k,t})=\{v_{i,j},u_{i,j}\mid 1\leq i\leq k, 1\leq j\leq t\}$$
and edge set 
$$E(H_{k,t})=\{u_{i,j}u_{i',j}\mid i\neq i'\}\cup\{u_{i,j}u_{i,j'}\mid j\neq j'\}\cup \{v_{i,j}v_{i',j}\mid i\neq i'\}\cup\{v_{i,j}u_{i,j}\}$$
where we have $1\leq i\leq k, 1\leq j\leq t$ for each $i$ and $j$. We illustrate graph $H_{3,3}$ in Figure \ref{FIGHalfExample}.

In other words, the set of vertices $\{v_{i,j}\mid 1\leq i\leq k\}$ induces a  clique $V_t^j$ for every $j$. Similarly, the set of vertices $\{u_{i,j}\mid 1\leq i\leq k\}$ induces a clique $U_t^j$ for every $j$ and the set of vertices $\{u_{i,j}\mid 1\leq j\leq t\}$ induces a clique $U_k^i$ for each $i$. In fact, the set of vertices $u_{i,j}$, for $1\leq i\leq k, 1\leq j\leq t$, forms the Cartesian product $K_t\Box K_k$. Observe that $H_{t,k}$ is twin-free since each vertex $u_{i,j}$ has a unique neighbour $v_{i,j}$ and vice versa. 

Let $C$ be a locating-dominating set of $H_{t,k}$. If we have $\{v_{i,j},u_{i,j},v_{i',j},u_{i',j}\}\cap C=\emptyset$. Then, $I(v_{i,j})=I(v_{i',j})$ and hence, we have a contradiction. Thus, $\ld(H_{t,k})\geq (k-1)t$. On the other hand, the set $\{v_{i,j}\mid 1\leq i\leq k, 1\leq j\leq t\}$ forms a locating-dominating set and hence, $$kt\geq \ld(H_{t,k})\geq (k-1)t.$$

Let us then consider the oriented locating-dominating sets. Let $D$ be an orientation of $H_{t,k}$ with $\mold(G)=\ld(D)$.

Let $S_j$ be a $2$-dominating set in the tournament $U_t^j$ for each $j$ (i.e. each vertex outside $S_j$ is dominated twice). Let $S'_i$ be a dominating set in the tournament $U_k^i\setminus \bigcup_{j=1}^tS_j$ for each $i$ in the orientation $D$. Observe that $|S_j|\leq 2\log(t+1)$ and $|S'_i|\leq \log(k+1)$ for each $i$ and $j$ by \cite{erdos1963schutte}. Moreover, let $C_j$ be an optimal locating-dominating set in the tournament $V_t^j$. We have $|C_j|\leq t/2$.

Consider next the set $C=\bigcup_{j=1}^t C_j\bigcup_{i=1}^k S'_i\bigcup_{j=1}^t S_j$. We have $$|C|\leq kt/2+2k\log(t+1)+t\log(k+1).$$
Observe that for each $i$ and $j$, vertex $u_{i,j}$ is now $3$-dominated by $\bigcup_{a=1}^k S'_a\cup\bigcup_{b=1}^t S_b$ and $I_D(u_{i,j})\neq I_D(u_{i',j'})$ where $(i,j)\neq (i',j')$. Moreover, each vertex $v_{i,j}$ is located by the set $C_j$. Thus, $C$ is a locating-dominating set of $D$ and $$\mold(H_{k,t})=\ld(D)\leq kt/2+2k\log(t+1)+t\log(k+1).$$

Finally, if we choose an orientation of $H_{k,t}$ such that each edge from $v_{i,j}$ is oriented to $u_{i,j}$ and such that all the cliques $V^j_t$ are oriented transitively, we notice that we need at least $t\lceil k/2\rceil$ vertices in $C$. 

Thus, $$\frac{\mold(H_{k,t})}{\ld(H_{k,t})}\leq \frac{kt/2+2k\log(t+1)+t\log(k+1)}{(k-1)t}$$ and $$ \frac{\mold(H_{k,t})}{\ld(H_{k,t})}\geq \frac{t\lceil k/2\rceil}{kt}\geq\frac{1}{2}.$$

When $k\to \infty$ and $t\to \infty$, $\mold(H_{k,t})/\ld(H_{k,t})\to \frac{1}{2}$. 
\end{proof}

\subsection{Lower bound with the number of vertices}\label{sec:forbidden}

In this subsection, we consider how small $\mold(G)$ can be  compared to the number of vertices. For the best orientation and the undirected case, there exist many graphs reaching the theoretical lower bound in $\Theta(\log n)$ (see Theorem \ref{thm:clique}).
For the worst orientation, we did not find any graph with
$\mold(G)$ of order $\log n$.

\begin{problem}\label{MinMold}
 Does there exist a class of graphs $\mathcal{G}$ such that for any $G\in \mathcal{G}$ on $n$ vertices the value $\mold(G)$ is logarithmic on $n$?
\end{problem}

We have three reasons to believe there is a positive answer for Open problem \ref{MinMold}. First, most of the other types of locating-dominating parameters can achieve logarithmic values on $n$. Secondly, we did not find a non-logarithmic lower bound. Thirdly, A natural class of candidates would be (Erd\H{o}s-Renyi) random graphs where an unoriented locating-dominating set has indeed logarithmic size \cite{FRIEZE}. However, the worst orientation of such a graph is not easy to manipulate and then we were not able to study efficiently upper bounds on $\mold(G)$.

On the other hand, in the following we give some properties which deny the possibility for a graph class $\mathcal{G}$ to have a logarithmic lower bound on $n$. Together with a well-known conjecture and an open problem, if they have a positive solution, these properties mean that if $\mathcal{G}$ has a certain type of a forbidden subgraph characterization, then it does not have a logarithmic lower bound for $\mold(G)$. In the following, we discuss these ideas and give some polynomial lower bounds for $\mold(G)$ in some graph classes.

Lemma \ref{WorstDirLDLowBound} gives a linear lower bound for $\mold(G)$ in $n$ for classes of graphs which have their chromatic number bounded by a  constant since $\alpha(G)\geq n/\chi(G)$ and for classes of graphs with cliques of linear size. These results can be extended to obtain bounds in $\Omega(n^\beta)$ where $\beta$ is a constant when a class of graphs $\mathcal G$ is {\em $\chi$-bounded} by a polynomial function, that is, if there exists a polynomial function $f$ such that $\chi(G)\leq f(\omega(G))$ holds for all $G\in \mathcal{G}$. Note that it has been asked \cite{karthick2016vizing} if it is true that every $\chi$-bounded class admits a $\chi$-bounding function that is polynomial. Moreover, Gy{\'a}rfas \cite{gyarfas1987problems} has conjectured that if the graph class $\mathcal{G}$ is $F$-free for some forest $F$, then $\mathcal{G}$ is $\chi$-bounded.




\begin{theorem}\label{thm:chibounded}
Let $\mathcal G$ be a class of graphs $\chi$-bounded by a function $f:x\mapsto x^c$ where $c$ is a constant.
Then, for any $G\in \mathcal G$ with $n$ vertices, we have: $$\mold(G)\geq 2^{-c/(c+1)}\cdot n^{\frac{1}{c+1}}.$$
\end{theorem}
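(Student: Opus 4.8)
The plan is to combine the two elementary lower bounds from Lemma~\ref{WorstDirLDLowBound} with the $\chi$-bounding hypothesis and the trivial inequality relating $\chi$ and $\alpha$, then balance the resulting estimates. Concretely, Lemma~\ref{WorstDirLDLowBound} gives both $\mold(G)\geq\alpha(G)$ and $\mold(G)\geq\lceil\omega(G)/2\rceil\geq\omega(G)/2$, so $\mold(G)$ is at least the maximum of these two quantities. The idea is that neither $\alpha(G)$ nor $\omega(G)$ can be simultaneously small, because a graph with small independence number and small clique number cannot have too many vertices once we know the class is $\chi$-bounded by a polynomial.

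First I would record the standard inequality $\chi(G)\geq n/\alpha(G)$, which holds because any proper colouring partitions $V(G)$ into independent sets, each of size at most $\alpha(G)$, so at least $n/\alpha(G)$ colour classes are needed. Combining this with the hypothesis $\chi(G)\leq\omega(G)^c$ yields
\[
\frac{n}{\alpha(G)}\;\leq\;\chi(G)\;\leq\;\omega(G)^{c},
\]
which rearranges to the key product bound $\alpha(G)\cdot\omega(G)^{c}\geq n$.

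Next I would set $M=\mold(G)$ and use the two bounds above in the form $\alpha(G)\leq M$ and $\omega(G)\leq 2M$. Substituting into the product bound gives
\[
n\;\leq\;\alpha(G)\cdot\omega(G)^{c}\;\leq\;M\cdot(2M)^{c}\;=\;2^{c}\,M^{c+1},
\]
whence $M^{c+1}\geq n/2^{c}$ and therefore
\[
\mold(G)\;=\;M\;\geq\;\left(\frac{n}{2^{c}}\right)^{\frac{1}{c+1}}\;=\;2^{-c/(c+1)}\cdot n^{\frac{1}{c+1}},
\]
as claimed.

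There is no serious obstacle here: the argument is essentially a one-line optimisation once the right ingredients are assembled. The only point requiring a little care is the balancing step, namely realising that the product bound $\alpha(G)\,\omega(G)^c\geq n$ is exactly what turns ``$\mold(G)$ dominates both $\alpha$ and $\omega/2$'' into a clean lower bound, and that substituting the extremal values $\alpha(G)=M$, $\omega(G)=2M$ is legitimate since we only need an upper bound on the left-hand product in terms of $M$. The constant $2^{-c/(c+1)}$ arises precisely from the factor $2$ in $\omega(G)\leq 2M$ coming from the $\lceil\omega(G)/2\rceil$ bound.
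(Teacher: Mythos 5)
Your proof is correct and takes essentially the same route as the paper: both combine $\mold(G)\geq\alpha(G)\geq n/\chi(G)$ and $\mold(G)\geq\omega(G)/2\geq\chi(G)^{1/c}/2$ from Lemma~\ref{WorstDirLDLowBound} with the $\chi$-bounding hypothesis and then balance the two estimates. The only difference is presentational: you package the balancing as the product bound $\alpha(G)\,\omega(G)^{c}\geq n$ together with the substitutions $\alpha(G)\leq M$ and $\omega(G)\leq 2M$, while the paper minimizes $\max\{n/\chi(G),\,\chi(G)^{1/c}/2\}$ over the possible values of $\chi(G)$; the outcome and the constant $2^{-c/(c+1)}$ are identical.
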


\begin{proof}
Let $G\in \mathcal G$. By Lemma~\ref{WorstDirLDLowBound}, we have $\mold(G)\geq\omega(G)/2\geq \chi(G)^{1/c}/2$ and $\mold(G)\geq\alpha(G)\geq n/\chi(G)$. Thus, $\mold(G)\geq \max\{n/\chi(G),\chi(G)^{1/c}/2\}$. This value attains its minimum when $n/\chi(G)=\chi(G)^{1/c}/2$. In other words, when $\chi(G)=(2n)^{c/(c+1)}$. This gives the claim. 
\end{proof}

Theorem \ref{thm:chibounded} applies in particular for perfect graphs for which $f$ is the identity function. Hence, if $G$ is a perfect graph, then \begin{equation}\label{perfectBound}
    \mold(G)\geq\sqrt{\frac{n}{2}}.
\end{equation} Theorem \ref{thm:chibounded} can also be used to get a lower bound, for example, for claw-free graphs. In \cite{chudnovsky2010claw},  the authors have shown that if $G$ is a connected claw-free graph with an independent set of size at least $3$, then $\chi(G)\leq2\omega(G)$. Thus, $\mold(G)\geq \sqrt{n}/2.$ Similar idea works also for $C_3$-free graphs. In \cite{kim1995ramsey}, the author has shown that if $G$ is $C_3$-free, then $\alpha(G)\in \Omega(\sqrt{n\log n})$. Thus, also $\mold(G)\in \Omega(\sqrt{n\log n})$.

Finally, we end the chapter by giving a class of perfect graphs which shows that Bound (\ref{perfectBound}) is tight within a logarithmic multiplier. We denote by $G\square H$ the cartesian product of $G$ and $H$.



\begin{theorem}
Let $m$ be an integer. Then,  $m\leq\mold(K_m\Box K_m)\leq 3m\log(m+1)$.
\end{theorem}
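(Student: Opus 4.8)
The plan is to identify $K_m\Box K_m$ with the $m\times m$ rook's graph: the vertices are pairs $(i,j)$ with $1\le i,j\le m$, two of them being adjacent exactly when they agree in exactly one coordinate. Write $R_i=\{(i,j):1\le j\le m\}$ for the $i$-th row and $C_j=\{(i,j):1\le i\le m\}$ for the $j$-th column; each $R_i$ and each $C_j$ induces a clique $K_m$, and under any orientation $D$ these cliques become tournaments. For the lower bound I would simply invoke Lemma~\ref{WorstDirLDLowBound}: a maximum independent set of the rook's graph is a set of pairwise non-attacking cells, so $\alpha(K_m\Box K_m)=m$ (at most one cell per row and column, realised by a permutation), whence $\mold(K_m\Box K_m)\ge\alpha(K_m\Box K_m)=m$.

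For the upper bound I would fix an arbitrary orientation $D$ and build a locating-dominating set explicitly, mirroring the construction in the proof of Theorem~\ref{gamma/2 example}. For each column $C_j$ I would choose a $2$-dominating set $A_j$ of the tournament $D[C_j]$ (so every vertex of $C_j\setminus A_j$ has at least two in-neighbours in $A_j$), and for each row $R_i$ a dominating set $B_i$ of the tournament $D[R_i]$. The tournament bounds used in Theorem~\ref{gamma/2 example} (see \cite{erdos1963schutte}) give $|A_j|\le 2\log(m+1)$ and $|B_i|\le\log(m+1)$, so $C:=\bigcup_j A_j\cup\bigcup_i B_i$ has size at most $2m\log(m+1)+m\log(m+1)=3m\log(m+1)$. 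Domination is immediate: any $(i,j)\notin C$ lies outside $A_j$ and therefore has (at least two) in-neighbours inside $A_j\subseteq C$.

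The heart of the argument is location, which I would split according to the coordinates of two distinct vertices $(i,j),(i',j')\notin C$. If $j\ne j'$, the column $2$-domination already separates them: $(i,j)$ has at least two in-neighbours of $C$ lying in $C_j$, whereas the only vertex of $C_j$ adjacent to $(i',j')$ is $(i',j)$, so $|I(i,j)\cap C_j|\ge 2>1\ge|I(i',j')\cap C_j|$ and the in-neighbourhoods differ. If $j=j'$ and $i\ne i'$ (same column), I would use the row domination: since $(i,j)\notin B_i$ and $B_i$ dominates $R_i$, the vertex $(i,j)$ has an in-neighbour $(i,\hat\jmath)\in B_i$ with $\hat\jmath\ne j$; this vertex lies in row $i$ but shares neither coordinate with $(i',j)$, hence is not adjacent to $(i',j)$ and so separates the two. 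These two cases exhaust all pairs, so $C$ is a locating-dominating set of $D$ and $\ld(D)\le 3m\log(m+1)$; taking the maximum over $D$ gives the bound.

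The main obstacle I anticipate is the same-column case and, more subtly, the reason a factor two is unavoidable in the column sets. A mere dominating set in each column would only guarantee one in-neighbour in $C_j$, which could coincide with the single cell $(i',j)$ that a differently-columned vertex also sees, so one cannot separate via column domination alone. Insisting on $2$-domination in the columns is exactly what forces the counts $\ge 2$ versus $\le 1$ and makes the separation work, while the rows then need only plain domination, which is what produces the asymmetric $2+1=3$ coefficient in $3m\log(m+1)$. The one point where I would be careful is checking that the in-neighbours provided by $A_j$ and $B_i$ are genuine in-neighbours in $D$; this holds because each $R_i$ and $C_j$ is a clique, so its induced sub-digraph is precisely the tournament in which $A_j$ and $B_i$ were chosen.
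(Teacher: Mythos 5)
Your proof is correct and follows essentially the same route as the paper: the lower bound via $\mold(G)\geq\alpha(G)=m$ (Lemma~\ref{WorstDirLDLowBound}), and the upper bound by taking tournament $2$-dominating sets of size at most $2\log(m+1)$ in the cliques of one coordinate and dominating sets of size at most $\log(m+1)$ in the other, exactly as the paper does by reference to the construction in Theorem~\ref{gamma/2 example}. The only difference is that you spell out the separation case analysis (column counts $\ge 2$ versus $\le 1$, then row separators for same-column pairs) which the paper leaves implicit, and you swap the roles of rows and columns, which is immaterial by symmetry.
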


\begin{proof}
Let us denote the vertices of $G=K_m\Box K_m$ by $V(G)=\{(v_i,u_j)\mid 1\leq i,j\leq m\}$. Moreover, we have $(v_{i_1},u_{j_1})(v_{i_2},u_{j_2})\in E(G)$ if $i_1=i_2$ or $j_1=j_2$. There are $2m$ cliques, each of size $m$ in $G$ and every vertex belongs to exactly two of these cliques. We have $\omega(K_m\Box K_m)=\chi(K_m\Box K_m)=m$. Thus, $m\leq\mold(K_m\Box K_m)$ and $G$ is perfect. 

Let $D$ be an orientation of $G$ such that $\mold(G)=\ld(D)$. Similarly, as in the proof of Theorem \ref{gamma/2 example}, we again construct a dominating set for each clique $\{(v_i,u_j)\mid 1\leq i\leq m\}$ where $j$ is fixed and a $2$-dominating set for each clique $\{(v_i,u_j)\mid 1\leq j\leq m\}$ where $i$ is fixed. Observe that, in $D$, each dominating set has cardinality of at most $\log(m+1)$ (\cite{erdos1963schutte}) and hence, each $2$-dominating set has cardinality of at most $2\log(m+1)$. Since we have $m$ dominating sets and $m$ different $2$-dominating sets, we have $\mold(K_m\Box K_m)\leq 3m\log(m+1)$.
\end{proof}



\bibliographystyle{abbrv}

\bibliography{LD}

\begin{thebibliography}{10}

\bibitem{ando2002path}
K.~Ando, Y.~Egawa, A.~Kaneko, K.-i. Kawarabayashi, and H.~Matsuda.
\newblock Path factors in claw-free graphs.
\newblock {\em Discrete mathematics}, 243(1-3):195--200, 2002.

\bibitem{OrientedMD}
J.~Bensmail, F.~{Mc Inerney}, and N.~Nisse.
\newblock Metric dimension: from graphs to oriented graphs.
\newblock {\em Electronic Notes in Theoretical Computer Science}, 346:111--123,
  2019.
\newblock The proceedings of Lagos 2019, the tenth Latin and American
  Algorithms, Graphs and Optimization Symposium (LAGOS 2019).

\bibitem{no2007aocating1domination}
M.~Blidia, M.~Chellali, F.~Maffray, J.~Moncel, and A.~Semri.
\newblock Locating-domination and identifying codes in trees.
\newblock {\em Australasian Journal of Combinatorics}, 39:219--232, 2007.

\bibitem{Bondy1976}
J.~A. Bondy and U.~S.~R. Murty.
\newblock {\em Graph Theory with Applications}.
\newblock Elsevier, New York, 1976.

\bibitem{caro1979new}
Y.~Caro.
\newblock New results on the independence number.
\newblock Technical report, Technical Report, Tel-Aviv University, 1979.

\bibitem{caro2012directed}
Y.~Caro and M.~A. Henning.
\newblock Directed domination in oriented graphs.
\newblock {\em Discrete Applied Mathematics}, 160(7-8):1053--1063, 2012.

\bibitem{charondirected}
I.~Charon, O.~Hudry, and A.~Lobstein.
\newblock Identifying and locating-dominating codes: Np-completeness results
  for directed graphs.
\newblock {\em IEEE Transactions on Information Theory}, 48(8):2192--2200,
  2002.

\bibitem{chudnovsky2010claw}
M.~Chudnovsky and P.~Seymour.
\newblock Claw-free graphs {VI}. colouring.
\newblock {\em Journal of Combinatorial Theory, Series B}, 100(6):560--572,
  2010.

\bibitem{cohen2018minimum}
N.~Cohen and F.~Havet.
\newblock On the minimum size of an identifying code over all orientations of a
  graph.
\newblock {\em The electronic journal of combinatorics}, P1.49, 2018.

\bibitem{delavina2010graffiti}
E.~De{L}a{V}iña, C.~E. Larson, R.~Pepper, and B.~Waller.
\newblock Graffiti.pc on the 2-domination number of a graph.
\newblock {\em Congressus Numerantium}, 203:15--32, 2010.

\bibitem{erdos1963schutte}
P.~Erd{\"o}s.
\newblock On sch{\"u}tte problem.
\newblock {\em Math. Gaz}, 47:220--222, 1963.

\bibitem{foucaudtwinfree}
F.~Foucaud, M.~A. Henning, C.~Löwenstein, and T.~Sasse.
\newblock Locating–dominating sets in twin-free graphs.
\newblock {\em Discrete Applied Mathematics}, 200:52--58, 2016.

\bibitem{foucaud2020domination}
F.~Foucaud, S.~Heydarshahi, and A.~Parreau.
\newblock Domination and location in twin-free digraphs.
\newblock {\em Discrete Applied Mathematics}, 284:42--52, 2020.

\bibitem{FRIEZE}
A.~Frieze, R.~Martin, J.~Moncel, M.~Ruszinkó, and C.~Smyth.
\newblock Codes identifying sets of vertices in random networks.
\newblock {\em Discrete Mathematics}, 307(9):1094--1107, 2007.

\bibitem{gallai1959uber}
T.~Gallai.
\newblock Über extreme {P}unkt- und {K}antenmengen.
\newblock {\em Annales universitatis scientiarum budapestinensis, Eötvös
  Sectio Mathematica}, 2:133--138, 1959.

\bibitem{garijoconjecture}
D.~Garijo, A.~González, and A.~Márquez.
\newblock The difference between the metric dimension and the determining
  number of a graph.
\newblock {\em Applied Mathematics and Computation}, 249:487--501, 2014.

\bibitem{goddard2016vertex}
W.~Goddard and H.~Xu.
\newblock Vertex colorings without rainbow or monochromatic subgraphs.
\newblock {\em arXiv preprint arXiv:1601.06920}, 2016.

\bibitem{Gravier08}
S.~Gravier, R.~Klasing, and J.~Moncel.
\newblock Hardness results and approximation algorithms for identifying codes
  and locating-dominating codes in graphs.
\newblock {\em Algorithmic Operations Research}, 3(1), 2008.

\bibitem{gyarfas1987problems}
A.~Gy{\'a}rf{\'a}s.
\newblock Problems from the world surrounding perfect graphs.
\newblock {\em Applicationes Mathematicae}, 3(19):413--441, 1987.

\bibitem{hakimi1965degrees}
S.~L. Hakimi.
\newblock On the degrees of the vertices of a directed graph.
\newblock {\em Journal of the Franklin Institute}, 279(4):290--308, 1965.

\bibitem{haynes1998domination}
T.~W. Haynes, S.~T. Hedetniemi, and P.~J. Slater.
\newblock {\em Domination in Graphs: Advanced Topics}.
\newblock Marcel Dekker, New York, 1998.

\bibitem{haynes1998fundamentals}
T.~W. Haynes, S.~T. Hedetniemi, and P.~J. Slater.
\newblock {\em Fundamentals of Domination in Graphs. 1998}.
\newblock Marcel Dekker, New York, 1998.

\bibitem{karthick2016vizing}
T.~Karthick and F.~Maffray.
\newblock Vizing bound for the chromatic number on some graph classes.
\newblock {\em Graphs and Combinatorics}, 32(4):1447--1460, 2016.

\bibitem{kim1995ramsey}
J.~H. Kim.
\newblock The ramsey number {R}(3, t) has order of magnitude $t^2$/log t.
\newblock {\em Random Structures \& Algorithms}, 7(3):173--207, 1995.

\bibitem{konig1931graphen}
D.~K{\"o}nig.
\newblock Graphen und matrizen, mat.
\newblock {\em Lapok}, 38:116--119, 1931.

\bibitem{lobstein2012watching}
A.~Lobstein.
\newblock Watching systems, identifying, locating-dominating and discriminating
  codes in graphs: a bibliography.
\newblock {\em Published electronically at \newline
  https://www.lri.fr/\%7Elobstein/debutBIBidetlocdom.pdf}.

\bibitem{rall1984location}
D.~F. Rall and P.~J. Slater.
\newblock On location-domination numbers for certain classes of graphs.
\newblock {\em Congressus Numerantium}, 45:97--106, 1984.

\bibitem{skaggs2007identifying}
R.~D. Skaggs.
\newblock {\em Identifying vertices in graphs and digraphs}.
\newblock PhD thesis, University of South Africa, 2007.

\bibitem{slater1987domination}
P.~J. Slater.
\newblock Domination and location in acyclic graphs.
\newblock {\em Networks}, 17(1):55--64, 1987.

\bibitem{slater1988dominating}
P.~J. Slater.
\newblock Dominating and reference sets in a graph.
\newblock {\em Journal of Mathematical and Physical Sciences}, 22(4):445--455,
  1988.

\bibitem{slater2002fault}
P.~J. Slater.
\newblock Fault-tolerant locating-dominating sets.
\newblock {\em Discrete Mathematics}, 249(1-3):179--189, 2002.

\bibitem{wei1981lower}
V.~Wei.
\newblock A lower bound on the stability number of a simple graph.
\newblock Technical report, Bell Laboratories Technical Memorandum 81-11217-9,
  Murray Hill, NJ, 1981.

\bibitem{zvervich1995induced}
I.~E. Zvervich and V.~E. Zverovich.
\newblock An induced subgraph characterization of domination perfect graphs.
\newblock {\em Journal of Graph Theory}, 20(3):375--395, 1995.

\end{thebibliography}

\end{document}